\setlist[enumerate]{itemsep=0mm,parsep=2mm,topsep=5pt,label=\textit{(\alph*)}}
\setlist[itemize]{itemsep=1mm,parsep=2mm,topsep=6pt}
\newcommand\myshade{85}
\colorlet{myurlcolor}{Aquamarine}
\definecolor{mycitecolor}{HTML}{5590B4}
\crefname{equation}{}{}
\crefname{appsec}{Appendix}{Appendices}
\pgfplotsset{compat=1.17}
\def\normaledge{1.2}
\definecolor{edgeblack}{rgb}{0.1,0.1,0.1}
\definecolor{vertexblack}{rgb}{0.05,0.05,0.05}
\theoremstyle{definition}
\newtheorem{theorem}{Theorem}[section]
\newtheorem{lemma}[theorem]{Lemma}
\newtheorem*{lemma*}{Lemma}
\newtheorem*{conjecture*}{Conjecture}
\newtheorem*{lemma''*}{``Lemma''}
\newtheorem*{claim*}{Claim}
\newtheorem{corollary}[theorem]{Corollary}
\newtheorem{conjecture}[theorem]{Conjecture}
\newtheorem{example}[theorem]{Example}
\newtheorem{proposition}[theorem]{Proposition}
\DeclareMathOperator{\rk}{rk}
\newcommand{\RR}{\mathbb{R}}
\newcommand{\CC}{\mathbb{C}}
\newcommand{\QQ}{\mathbb{Q}}
\def\mainclass[#1]{$#1$-joined}
\newcommand{\stressnullity}[1]{k_{d, #1}}
\newcommand{\stressnullitydim}[2]{k_{#2,#1}}
\newcommand{\algmat}[1]{\mathcal{M}(#1)}
\newcommand{\stressmatroid}[2]{\mathcal{S}_{#1}(#2)}
\newcommand{\gen}[1]{#1^g}
\newcommand{\affinematroid}[2]{\mathcal{A}_{#1}(#2)}
\newcommand{\globrig}[1]{globally $#1$-rigid}
\newcommand{\rig}[1]{$#1$-rigid}
\newcommand{\strongly}[1]{${#1}$-stress-linked}
\newcommand{\stresskernel}[2]{K_{{#1},{#2}}}
\newcommand{\sharedstresskernel}[1]{K}
\newcommand{\contactlocus}[2]{L_{#1,#2}}
\newcommand{\sym}[1]{\text{Sym}(\CC^{#1 \times #1})}
\newcommand{\affineedge}[2]{A(#1,#2)}
\DeclareMathOperator{\aff}{Affine}
\newcommand{\affine}[2]{\aff(#2)}
\newcommand{\quasigen}{quasi-generic}
\newcommand{\coneG}{G^c}
\newcommand{\bigdim}{D}
\title{Stress-linked pairs of vertices and the generic stress matroid}
\author{Dániel Garamvölgyi\thanks{MTA-ELTE Momentum Matroid Optimization Research Group, Department of
Operations Research, Eötvös Loránd University, and HUN-REN Alfréd Rényi Institute of Mathematics, Budapest, Hungary. e-mail: \texttt{daniel.garamvolgyi@ttk.elte.hu}}}
\date{}
\begin{document}

\maketitle
\begin{abstract}
    Given a graph $G$ and a mapping $p : V(G) \rightarrow \RR^d$, we say that the pair $(G,p)$ is a ($d$-dimensional) realization of $G$. Two realizations $(G,p)$ and $(G,q)$ are equivalent if each of the point pairs corresponding to the edges of $G$ have the same distance under the embeddings $p$ and $q$. A pair of vertices $\{u,v\}$ is globally linked in $G$ in $\RR^d$ if for every generic realization $(G,p)$ and every equivalent realization $(G,q)$, $(G+uv,p)$ and $(G+uv,q)$ are also equivalent. %
    
    In this paper, we introduce and investigate the notion of \strongly{d} vertex pairs. Roughly speaking, a pair of vertices $\{u,v\}$ is \strongly{d} in $G$ if the edge $uv$ is generically stressed in $G+uv$ and for every generic $d$-dimensional realization $(G,p)$, every configuration $q$ that satisfies the equilibrium stresses of $(G,p)$ also satisfies the equilibrium stresses of $(G+uv,p)$. Among other results, we show that \strongly{d} vertex pairs are globally linked in $\RR^d$, and we give a combinatorial characterization of \strongly{2} vertex pairs that matches the conjectural characterization of globally linked pairs in $\RR^2$ due to Jackson et al.
    
    As a key tool, we introduce and study the  ``algebraic dual'' of the $d$-dimensional generic rigidity matroid of a graph $G$, which we call the $d$-dimensional generic stress matroid of $G$. Our results about this matroid, which describes the global behavior of equilibrium stresses of generic realizations of $G$, may be of independent interest.
    
    We use our results to give positive answers to a conjecture of Jordán on minimally globally rigid graphs, a conjecture of Jordán and the author on globally linked vertex pairs, and to conjectures of Connelly and Grasegger et al.\ on rigidity properties of graphs with small separators.  
\end{abstract}

\newpage

\tableofcontents

\newpage

\section{Introduction}

One of the central unsolved problems in combinatorial rigidity theory is finding a combinatorial characterization of \globrig{d} graphs for $d \geq 3$. Roughly speaking, a graph $G$ is \emph{\globrig{d}} if for every generic embedding $p : V(G) \rightarrow \RR^d$ of its vertices into Euclidean space, the (labeled) set of edge lengths $\big\{\lVert p(u) - p(v) \rVert, uv \in E(G)\big\}$ uniquely determines all pairwise distances $\big\{\lVert p(u) - p(v)\rVert, u,v \in V(G)\big\}$. (Precise definitions and references are given in the next section.)
It is folklore that a graph is \globrig{1} if and only if it is $2$-connected, and Jackson and Jordán~\cite{jackson.jordan_2005} gave a combinatorial characterization of \globrig{2} graphs, but the $d \geq 3$ case is still wide open.

However, there is a linear-algebraic characterization of globally $d$-rigid graphs, for all $d$, due to Gortler, Healy and Thurston~\cite{gortler.etal_2010}. 
A $d$-dimensional \emph{realization} of $G$ is a pair $(G,p)$ with $p: V(G) \rightarrow \RR^d$. A vector $\omega \in \RR^{E(G)}$ is an \emph{equilibrium stress}, or simply a \emph{stress}, of $(G,p)$ if it satisfies a certain system of equilibrium conditions determined by $(G,p)$. The main result of~\cite{gortler.etal_2010} states, roughly, that a graph on at least $d+2$ vertices is \globrig{d} if and only if for every generic $d$-dimensional realization $(G,p)$ and every generic equilibrium stress $\omega$ of $(G,p)$, the realizations $(G,q)$ that have $\omega$ as a stress are precisely the affine images of $(G,p)$. 
Although this is not a combinatorial characterization, it has nonetheless proved to be a fundamental tool in the combinatorial approach. 

Apart from the characterization itself, a major contribution of~\cite{gortler.etal_2010} is the introduction of tools from algebraic geometry to rigidity theory. In particular, the authors gave a geometric interpretation of stresses as the tangent hyperplanes of the \emph{measurement variety} of $G$, which is the affine variety of squared edge measurements of $d$-dimensional realizations of $G$. This geometric picture is at the heart of our investigations in this paper.

While there have been a number of important advances in the combinatorial understanding of global rigidity %
since the publication of~\cite{gortler.etal_2010},
a general combinatorial characterization is still elusive. 
One possible avenue for progress 
is to investigate globally linked pairs. A pair of vertices $\{u,v\} \subseteq V(G)$ is \emph{globally linked in $G$ in $\RR^d$} if for every generic $d$-dimensional realization $(G,p)$, the distances $\big\{\lVert p(u') - p(v')\rVert, u'v' \in E(G)\big\}$ uniquely determine the distance $\lVert p(u) - p(v) \rVert$. It is apparent by comparing the definitions that $G$ is \globrig{d} if and only if every pair of vertices is globally linked in $G$ in $\RR^d$. This suggests that characterizing globally linked pairs is at least as hard as characterizing global rigidity, and indeed it seems to be more difficult: an NP-type characterization (combinatorial or otherwise) of globally linked pairs is not even available in the $d=2$ case. %

A natural objective, following~\cite{gortler.etal_2010}, is to find a characterization of globally linked pairs based on equilibrium stresses. An apparent difficulty for this is the fact that, in contrast with global rigidity, being globally linked is not a ``generic property'': it can happen that some generic realizations $(G,p)$ satisfy the condition defining globally linked pairs, while other generic realizations do not. This problem can be rectified by extending our setting to include \emph{complex realizations}, that is, pairs $(G,p)$ where $p : V(G) \rightarrow \CC^d$. %
In this way one can define what it means for a pair of vertices to be globally linked in $G$ in $\CC^d$, and this does turn out to be a generic property. However, this approach presents another difficulty: it seems that the proof method of~\cite{gortler.etal_2010}, which is based on degree theory, cannot be adapted to the complex setting. 

Nonetheless, we 
initiate this line of study 
by introducing a stress-based analog of globally linked vertex pairs in $\RR^d$ (and in $\CC^d$). Roughly speaking, we say that a pair of vertices $\{u,v\} \subseteq V(G)$ is \emph{\strongly{d}} in $G$ if the edge $uv$ is generically stressed in $G+uv$ in $d$ dimensions and for every generic $d$-dimensional realizations $(G,p)$ and $(G,q)$, if $(G,q)$ satisfies the equilibrium stresses of $(G,p)$, then $(G+uv,q)$ satisfies the equilibrium stresses of $(G+uv,p)$. We show that \strongly{d} vertex pairs are globally linked in $\CC^d$, and hence in $\RR^d$ (\cref{theorem:nongloballylinked}). 
We conjecture that the reverse implication also holds, so that being \strongly{d}, globally linked in $\CC^d$, and globally linked in $\RR^d$ are all equivalent (\cref{conjecture:stresskernel}).
As partial evidence for this, 
we show that the combinatorial characterization conjectured  by Jackson, Jordán and Szabadka~\cite{jackson.etal_2006} for globally linked pairs in $\RR^2$, and later by Jackson and Owen~\cite{jackson.owen_2019} for globally linked pairs in $\CC^2$, characterizes \strongly{2} vertex pairs (\cref{theorem:2dimchar}). Among other results, we also settle a ``gluing conjecture'' made by  Jordán and the author in~\cite{garamvolgyi.jordan_2023a} about globally linked pairs in $\RR^d$ by proving that the analogous statement holds for  \strongly{d} pairs (\cref{theorem:gluing}). %

The study of \strongly{d} vertex pairs requires a deeper understanding of the global behavior of stresses of generic frameworks. For example, is it possible that two such stresses (possibly belonging to different realizations) only differ in one coordinate? Or, when can we find such a stress whose values are prescribed on some set of edges? In order to handle such questions we introduce the \emph{dual variety} of the measurement variety, which we call the \emph{($d$-dimensional generic) stress variety} of $G$. While the generic stress variety already appeared implicitly in~\cite{gortler.etal_2010}, we believe that its full significance in rigidity theory is yet to be understood. %

We explore the duality between the measurement variety and the stress variety using the language of matroid theory. It is well-known that to each affine variety $X$ we can associate an \emph{algebraic matroid} $\mathcal{M}(X)$, defined via properties of the coordinate projections of $X$. We call the algebraic matroid associated to the stress variety the \emph{($d$-dimensional generic) stress matroid} of $G$.
The perspective of algebraic matroids highlights a connection that plays an important role in this paper: that the algebraic matroid of a complex affine variety coincides with the linear matroid of its tangent spaces at generic points. For the measurement variety, this is equivalent to the folklore fact that a generic realization $(G,p)$ is infinitesimally rigid if and only if there is, up to congruence, a finite number of realizations $(G,q)$ equivalent to $(G,p)$. However, when applied to the stress variety, the same observation leads to a novel insight: a characterization of the stress matroid in terms of the \emph{generic contact loci} of the measurement variety (\cref{theorem:rigiditycontactlocusmatroid}). We use this characterization to illuminate both the combinatorial structure of the stress matroid, and the geometric structure of the generic contact locus.

In \cref{section:applications}, we apply our results on \strongly{d} vertex pairs to problems in rigidity theory. We say that a graph $G$ is \emph{minimally \globrig{d}} if it is \globrig{d} but $G-e$ is not, for every $e \in E(G)$. In~\cite{jordan_2017}, Jordán gave a conjectured upper bound on the number of edges in a minimally \globrig{d} graph. He also posed the stronger conjecture that if $G$ is \globrig{d} but $G-e$ is not, then $e$ is an $\mathcal{R}_{d+1}$-bridge in $G$. The latter conjecture implies that minimally \globrig{d} graphs are not only sparse (in the sense that they have few edges), but everywhere sparse. While in~\cite{garamvolgyi.jordan_2023} Jordán and the author proved the upper bound on the number of edges, the other conjecture remains open. However, we prove that minimally \globrig{d} graphs are indeed everywhere sparse (\cref{corollary:minglobrigid}). More precisely, we show that in a minimally \globrig{d} graph, every set $X$ of at least $d+2$ vertices induces at most $(d+1)|X| - \binom{d+2}{2}$ edges. We also show that this upper bound is strict unless $X$ is a clique of size $d+2$, and give analogous results for vertex- and edge-redundantly globally rigid graphs.

We also settle two conjectures about graphs with small separators. In~\cite{connelly_2011a}, Connelly showed that if $G_1$ and $G_2$ are \globrig{d} graphs with $d+1$ vertices in common, then $(G_1 \cup G_2) - uv$ is \globrig{d} for every edge $uv \in E(G_1) \cap E(G_2)$, and asked whether the analogous result holds for redundantly \rig{d} graphs. In~\cite{grasegger.etal_2022}, Grasegger, Guler, Jackson and Nixon considered a similar question for so-called $\mathcal{R}_d$-circuits: they conjectured that if $G_1$ and $G_2$ are $\mathcal{R}_d$-circuits such that $G_1 \cap G_2$ is a complete graph on $t$ vertices with $2 \leq t \leq d+1$, then $(G_1 \cup G_2) - uv$ is also an $\mathcal{R}_d$-circuit, for every edge $uv \in E(G_1) \cap E(G_2)$. We give affirmative answers to both conjectures (\cref{theorem:connellyquestion,theorem:tsumcircuitstronger}). Both results follow quickly from combining the ``gluing theorem'' (\cref{theorem:gluing}) with another result on stress-linked vertex pairs which we call the ``cocircuit theorem'' (\cref{corollary:bridges}).  %

\section{Preliminaries}\label{section:preliminaries}

The goal of this section is to set some notation and recall relevant background from matroid theory and combinatorial rigidity theory. \cref{subsection:algmatroids} introduces algebraic matroids -- in our setting, matroids defined by coordinate projections of irreducible varieties. 
In \cref{subsection:rigiditymatroid}, we give the basic definitions of rigidity theory from an algebraic geometric point of view.

To avoid some trivialities, we will implicitly assume that every graph considered throughout the paper has a nonempty set of edges. Given a graph $G$, we use $V(G)$ and $E(G)$ to denote the vertex set and edge set of $G$, respectively. Throughout the paper, $d$ will denote a fixed positive integer.

We assume familiarity with the basic notions of matroid theory; the standard reference is~\cite{oxley_2011}. Given a matroid $\mathcal{M}$, we use $\mathcal{M}^*$ to denote its dual matroid.
Suppose that $\mathcal{M}$ has ground set $E$ and rank function $r$. A bipartition $(E_1,E_2)$ of $E$ is a \emph{separation} of $\mathcal{M}$ if $r(E) = r(E_1) + r(E_2)$. A matroid is \emph{connected} if it has no nontrivial separations, or equivalently, if for every pair of elements of $E$, there is a circuit of $\mathcal{M}$ containing them. Every matroid can be uniquely written as the direct sum of connected matroids, which are called the \emph{connected components} of the matroid. Given two matroids $\mathcal{N},\mathcal{M}$ on the same ground set, we write $\mathcal{N} \preceq \mathcal{M}$ to mean that every independent set in $\mathcal{N}$ is also independent in $\mathcal{M}$.

Given a finite set $E$, we use the notation $\CC^E$ for the affine space whose coordinate axes are labeled by the members of $E$. When using this notation, we will always implicitly assume that $E$ is nonempty. 
For  $\omega \in \CC^E$, we use the notation
\[\omega^\perp = \{x \in \CC^E: \sum_{e \in E} \omega_e x_e = 0\},\]
and similarly, for a linear subspace $L \subseteq \CC^E$ we set
\[L^\perp = \bigcap_{x \in L} x^\perp = \{\omega \in \CC^E : L \subseteq \omega^\perp\}.\]
We call $L^\perp$ the \emph{orthogonal complement} of $L$.
For an irreducible affine variety $X \subseteq \CC^E$ and a point $x \in X$, we let $T_x(X)$ denote the tangent space of $X$ at $x$. We say that $X$ is \emph{defined over $\QQ$} if it can be written as the zero locus of some polynomials with coefficients in $\QQ$. We say that a point $x \in X$ is \emph{$X$-generic} if the only polynomials over $\QQ$ that vanish at $x$ are those vanishing at every point of $X$. See \cref{appendix:alggeo} for a detailed overview of the algebraic geometry background used in the paper. 

\subsection{Matroids of varieties}\label{subsection:algmatroids}

In this subsection we define the algebraic matroid of an irreducible affine variety $X$. For a good reference on this topic, see~\cite{rosen.etal_2020}. Oxley~\cite[Section 6.7]{oxley_2011} also discusses algebraic matroids, but from the perspective of algebraic independence in field extensions, rather than using coordinate projections of affine varieties. As discussed in~\cite{rosen.etal_2020}, the two approaches are equivalent when one is working over an algebraically closed field, as we shall do.

Let $E$ be a finite set, and let $X \subseteq \CC^E$ be an irreducible affine variety. For each subset $E_0 \subseteq E$, let us denote by $\pi_{E_0}$ the restriction of the coordinate projection  $\CC^E \rightarrow \CC^{E_0}$ to $X$. Throughout the section, we will use the notation $X_{E_0} = \overline{\pi_{E_0}(X)}$.

To every irreducible variety $X \subseteq \CC^E$ we associate a matroid $\algmat{X}$ on ground set $E$ defined by
\[E_0 \subseteq E \text{ is independent in } \algmat{X} \Longleftrightarrow X_{E_0} = \CC^{E_0}.\] We call $\algmat{X}$ the \emph{algebraic matroid of $X$}, and we say that $X$ is an \emph{algebraic representation} of $\algmat{X}$. It is known that this indeed defines a matroid. The rank of $\algmat{X}$ is $\dim(X)$, and the restriction of $\algmat{X}$ to a subset $E_0 \subseteq E$ is the algebraic matroid associated to the projection $X_{E_0}$. The following characterization of the spanning sets of an algebraic matroid follows from the fiber dimension theorem (\cref{theorem:fiberdimension}, see also \cref{lemma:genericproperties}(b)).

\begin{lemma}\label{lemma:algmatspanning} \emph{(Geometric characterization of spanning sets.)}\newline
    Let $X \subseteq \CC^E$ be an irreducible affine variety. Fix a subset $E_0 \subseteq E$, and let $\pi_{E_0}: X \rightarrow \CC^{E_0}$ denote the projection of $X$ to $\CC^{E_0}$. 
    \begin{enumerate}
        \item $E_0$ is spanning in $\algmat{X}$ if and only if for a nonempty Zariski open subset of points $x \in X$, the fiber of $\pi_{E_0}$ over $\pi_{E_0}(x)$ is finite.
        \item If $X$ is defined over $\QQ$ and $x$ is an $X$-generic point, then $E_0$ is spanning in $\algmat{X}$ if and only if the fiber of $\pi_{E_0}$ over $\pi_{E_0}(x)$ is finite.
    \end{enumerate}
\end{lemma}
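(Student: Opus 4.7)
My plan is to reduce both parts to the fiber dimension theorem (\cref{theorem:fiberdimension}) and its generic counterpart, \cref{lemma:genericproperties}(b), using the standard fact that an affine variety is finite if and only if it is zero-dimensional. The first step is to rephrase the hypothesis on $E_0$: as noted in the paragraph preceding the lemma, the rank of $\algmat{X}$ equals $\dim(X)$ and the restriction of $\algmat{X}$ to $E_0$ is the algebraic matroid of $X_{E_0}$, so $E_0$ is spanning in $\algmat{X}$ if and only if $\dim(X_{E_0}) = \dim(X)$.

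For part (a), the forward direction will follow immediately from \cref{theorem:fiberdimension}: assuming $\dim(X_{E_0}) = \dim(X)$, on a nonempty Zariski open subset of $X$ every component of the fiber of $\pi_{E_0}$ over $\pi_{E_0}(x)$ has dimension $0$ and is therefore a finite set of points. For the reverse direction, I would suppose the fibers over a nonempty Zariski open subset $U \subseteq X$ are finite. \cref{theorem:fiberdimension} also yields a nonempty Zariski open subset $V \subseteq X$ on which every fiber component has dimension $\dim(X) - \dim(X_{E_0})$. Since $X$ is irreducible, both open sets are dense and $U \cap V$ is nonempty; at any point of $U \cap V$ the fiber is simultaneously finite and has components of dimension $\dim(X) - \dim(X_{E_0})$, forcing $\dim(X_{E_0}) = \dim(X)$, as required.

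Part (b) is the ``generic'' version of the same argument: for any $X$-generic $x$, \cref{lemma:genericproperties}(b) says the fiber of $\pi_{E_0}$ over $\pi_{E_0}(x)$ has dimension $\dim(X) - \dim(X_{E_0})$, and this vanishes (equivalently, the fiber is finite) if and only if $E_0$ is spanning. I do not anticipate a real obstacle here; the result is essentially a repackaging of the fiber dimension theorem through the definition of the algebraic matroid. The one subtlety worth spelling out in the written proof is the equivalence of ``finite'' with ``zero-dimensional'' for affine varieties, which holds because every affine variety has finitely many irreducible components and each zero-dimensional irreducible component is a single point.
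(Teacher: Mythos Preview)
Your proposal is correct and matches the paper's own justification, which simply states that the lemma follows from \cref{theorem:fiberdimension} and its generic version \cref{lemma:genericproperties}(b). You have merely spelled out that argument in full, including the observation that $E_0$ is spanning in $\algmat{X}$ precisely when $\dim(X_{E_0}) = \dim(X)$.
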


The next lemma relates the separations of $\algmat{X}$ to the geometry of $X$. It is an easy consequence of the fact that the Cartesian product of irreducible affine varieties is an irreducible affine variety.
\begin{lemma}\label{lemma:algmatseparation}\cite[Lemma 3.1]{garamvolgyi.etal_2022} \emph{(Geometric characterization of separations.)}\newline
    Let $X \subseteq \CC^E$ be an irreducible affine variety. A bipartition $(E_1,E_2)$ of $E$ is a separation of $\algmat{X}$ if and only if $X = X_{E_1} \times X_{E_2}$, where $X_{E_i} = \overline{\pi_{E_i}(X)}$. In particular, $e \in E$ is a bridge of $\algmat{X}$ if and only if $X = X_{E-e} \times \CC$.
\end{lemma}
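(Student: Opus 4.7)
The plan is to reformulate the separation condition in terms of dimensions and then use irreducibility to upgrade a dimension equality into a set equality. The rank function of $\algmat{X}$ on a subset $E_0$ equals $\dim(X_{E_0})$, so by definition $(E_1,E_2)$ is a separation of $\algmat{X}$ if and only if $\dim(X) = \dim(X_{E_1}) + \dim(X_{E_2})$.

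Next I would observe the trivial inclusion $X \subseteq X_{E_1} \times X_{E_2}$: every $x \in X$ splits as $(\pi_{E_1}(x), \pi_{E_2}(x))$ with each factor in the corresponding projection (and hence in its closure). The key input is the cited fact that the Cartesian product $X_{E_1} \times X_{E_2}$ of two irreducible affine varieties is again an irreducible affine variety, and its dimension equals $\dim(X_{E_1}) + \dim(X_{E_2})$.

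Now both directions follow cleanly. If $X = X_{E_1} \times X_{E_2}$, then taking dimensions immediately yields the separation condition. Conversely, if $(E_1,E_2)$ is a separation, then $X$ is an irreducible subvariety of the irreducible variety $X_{E_1} \times X_{E_2}$ with $\dim(X) = \dim(X_{E_1} \times X_{E_2})$; since any proper irreducible subvariety of an irreducible variety has strictly smaller dimension, the inclusion must be an equality.

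For the bridge statement, recall that $e \in E$ is a bridge of a matroid $\mathcal{M}$ if and only if $(\{e\}, E \setminus \{e\})$ is a separation \emph{and} $\{e\}$ is independent. Applying the first part, a bridge corresponds to a decomposition $X = X_{E - e} \times X_{\{e\}}$; independence of $\{e\}$ forces $X_{\{e\}} = \CC$, yielding the claimed form $X = X_{E-e} \times \CC$. No step here is a serious obstacle—the only substantive ingredient is the irreducibility of products, which is quoted; everything else is bookkeeping with dimensions and the definition of the algebraic matroid.
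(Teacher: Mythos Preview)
Your proof is correct and follows exactly the approach the paper indicates: the paper does not give a full proof but states that the lemma ``is an easy consequence of the fact that the Cartesian product of irreducible affine varieties is an irreducible affine variety,'' which is precisely the key ingredient you invoke, combined with the dimension formula for products and the fact (also recorded in the paper) that a proper irreducible subvariety has strictly smaller dimension.
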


We will use the observation that if $X' \subseteq X \subseteq \CC^E$ are irreducible varieties, then $\algmat{X'} \preceq \algmat{X}$. Indeed, $\overline{\pi_{E_0}(X')} = \CC^{E_0}$ clearly implies $\overline{\pi_{E_0}(X)} = \CC^{E_0}$, for every subset $E_0 \subseteq E$.

The following folklore result relates the algebraic matroid of $X$ to the algebraic matroid of its tangent space at a suitable point. This connection plays a key role in this paper.
\begin{theorem}\label{theorem:tangentspacematroid}
    If $X \subseteq \CC^E$ is an irreducible affine variety, then $\algmat{X} = \algmat{T_x(X)}$ holds for a nonempty Zariski open subset of points $x \in X$. If $X$ is defined over $\QQ$, then $\algmat{X} = \algmat{T_x(X)}$ holds for every $X$-generic point $x \in X$.
\end{theorem}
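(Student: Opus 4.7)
The plan is to reduce the matroid equality to a statement about ranks and then apply Bertini (\cref{theorem:bertini}) to match up ranks coordinate-subset by coordinate-subset, finally intersecting over all (finitely many) subsets $E_0 \subseteq E$.

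The starting observation is that for any $E_0 \subseteq E$, the coordinate projection $\pi_{E_0} : \CC^E \to \CC^{E_0}$ is a linear map, so its differential at any $x \in X$ is just the restriction of $\pi_{E_0}$ to $T_x(X)$. Under this identification, the rank of $E_0$ in $\algmat{T_x(X)}$ equals $\dim \pi_{E_0}(T_x(X))$, since the algebraic matroid of a linear subspace coincides with the matroid it defines as a subspace of $\CC^E$. On the other hand, the rank of $E_0$ in $\algmat{X}$ is $\dim(X_{E_0})$ by definition.

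The key link is the inclusion $\pi_{E_0}(T_x(X)) = d(\pi_{E_0})_x(T_x(X)) \subseteq T_{\pi_{E_0}(x)}(X_{E_0})$, which gives the inequality $\dim \pi_{E_0}(T_x(X)) \le \dim T_{\pi_{E_0}(x)}(X_{E_0})$, with equality if and only if $d(\pi_{E_0})_x$ is surjective onto $T_{\pi_{E_0}(x)}(X_{E_0})$. By \cref{theorem:bertini}, there is a nonempty Zariski open subset $U_{E_0} \subseteq X$ on which this surjectivity holds. Moreover, restricting further to the Zariski open subset $V_{E_0} \subseteq U_{E_0}$ where $\pi_{E_0}(x)$ is a smooth point of $X_{E_0}$ (which is nonempty since the smooth locus of $X_{E_0}$ is a dense open subset and $\pi_{E_0}(X)$ is dense in $X_{E_0}$), we also have $\dim T_{\pi_{E_0}(x)}(X_{E_0}) = \dim(X_{E_0})$. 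Combining, for $x \in V_{E_0}$ the ranks of $E_0$ in $\algmat{X}$ and in $\algmat{T_x(X)}$ agree. Since $E$ is finite, the intersection $U := \bigcap_{E_0 \subseteq E} V_{E_0}$ is still a nonempty Zariski open subset of $X$, and for every $x \in U$ the two matroids have the same rank function on every subset, hence $\algmat{X} = \algmat{T_x(X)}$.

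For the second part, suppose $X$ is defined over $\QQ$ and $x$ is $X$-generic. For each $E_0 \subseteq E$, the projection $\pi_{E_0}$ has rational coefficients, so by \cref{lemma:genericimage}(a), $\pi_{E_0}(x)$ is $X_{E_0}$-generic. By \cref{lemma:genericproperties}(a) the point $\pi_{E_0}(x)$ is then a smooth point of $X_{E_0}$, and by \cref{lemma:genericproperties}(c), $d(\pi_{E_0})_x : T_x(X) \to T_{\pi_{E_0}(x)}(X_{E_0})$ is surjective. Exactly as in the previous paragraph, this gives equality of ranks for $E_0$; ranging over all $E_0$ yields $\algmat{X} = \algmat{T_x(X)}$.

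I do not anticipate a serious obstacle here; the proof is a routine combination of the linearity of coordinate projections, Bertini's theorem, and the generic-smoothness facts from \cref{lemma:genericproperties}. The only point requiring a little care is remembering that the rank of $E_0$ in the algebraic matroid of a linear subspace of $\CC^E$ is $\dim \pi_{E_0}(T_x(X))$ rather than $\dim T_x(X) \cap \CC^{E_0}$, and that smoothness of $\pi_{E_0}(x)$ in $X_{E_0}$ is needed to convert surjectivity of the differential into the correct dimension count.
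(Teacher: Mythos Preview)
Your argument is correct and follows essentially the same strategy as the paper: observe that coordinate projections are linear (hence equal to their own differentials), apply \cref{theorem:bertini} subset by subset and intersect the resulting open sets, and for the $\QQ$-defined case invoke \cref{lemma:genericproperties} at the generic point. The only differences are cosmetic: you compare ranks of every $E_0$ rather than just independent sets, and you are explicit about restricting to the locus where $\pi_{E_0}(x)$ is a smooth point of $X_{E_0}$, a point the paper leaves implicit; also, for the generic case the paper cites \cref{lemma:genericproperties}(d) whereas you (correctly) use parts (a) and (c) together with \cref{lemma:genericimage}(a).
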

\begin{proof}
    Note that the projection $\pi_{E_0} : X \rightarrow \CC^{E_0}$ is a linear mapping, and hence it coincides with its differential at any point. Let $X_{E_0}$ denote $\overline{\pi_{E_0}(X)}$. It follows from \cref{theorem:bertini} that there is a nonempty Zariski open subset $U$ of points $x \in X$ such that for every subset $E_0 \subseteq E$, the projection $\pi_{E_0} : T_x(X) \rightarrow T_{\pi_{E_0}(x)}(X_{E_0})$ is surjective. By further restricting $U$, we may also assume that $\pi_{E_0}(x)$ is a smooth point of $X_{E_0}$ for all $E_0 \subseteq E$. Since $X_{E_0} = \CC^{E_0}$ if and only if the tangent space of $X_{E_0}$ at some smooth point is all of $\CC^{E_0}$, it follows that for every $x \in U$ we have $\algmat{X} = \algmat{T_x(X)}$. The statement for generic points is proved in the same way by using \cref{lemma:genericproperties}(c) in place of \cref{theorem:bertini}.
\end{proof}

We shall often consider algebraic matroids of linear spaces. Unsurprisingly, such matroids can also be viewed as linear matroids, as described by the following lemma.

\begin{lemma}\label{lemma:linearalgebraicmatroid} \emph{(Algebraic matroids of linear spaces.)} 
    Let $X \subseteq \CC^E$ be a linear space.
    \begin{enumerate}
        \item  If $M \in \CC^{E \times k}$ is a matrix whose columns span $X$, then $\algmat{X}$ is the linear matroid (over $\CC$) defined by the rows of $M$.
        \item We have $\algmat{X^\perp} = \algmat{X}^*$.%
    \end{enumerate}
\end{lemma}
\begin{proof}
    \textit{(a)} Since $X$ is a linear space, so are its projections. Hence, a subset $E_0 \subseteq E$ is independent in $\algmat{X}$ if and only if the projection of $X$ to $\CC^{E_0}$ is surjective. This projection is the same as the image of the submatrix $M_{E_0}$ consisting of the rows of $M$ corresponding to $E_0$. Thus the projection is surjective if and only if $M_{E_0}$ has full row rank, which is equivalent to $E_0$ being independent in the linear matroid defined by the rows of $M$. \textit{(b)} This now follows from the well-known fact that the linear matroid of a vector space is dual to the linear matroid of the orthogonal complement of the vector space; see, e.g.,~\cite[Section 2.2]{oxley_2011}.
\end{proof}
We will need the following strengthening of \cref{lemma:algmatspanning} for algebraic matroids of linear spaces.
\begin{lemma}\label{lemma:linearspanning}
    Let $X \subseteq \CC^E$ be a linear space, and fix $E_0 \subseteq E$. The following are equivalent.
    \begin{enumerate}
        \item The fiber of $\pi_{E_0}$ over $\pi_{E_0}(x)$ is a singleton for every $x \in X$.
        \item $E_0$ is spanning in $\algmat{X}$.
        \item The fiber of $\pi_{E_0}$  over $\pi_{E_0}(x)$ is a singleton for some $x \in X$.
    \end{enumerate}
\end{lemma}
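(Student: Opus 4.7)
The plan is to exploit the fact that $X$ being a linear subspace of $\CC^E$ makes $\pi_{E_0}|_X : X \to \CC^{E_0}$ a linear map between vector spaces. This single observation drives the whole argument. Set $K := \ker(\pi_{E_0}|_X)$.

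First I would establish the equivalence (a) $\iff$ (c) directly: every nonempty fiber of a linear map is a translate of its kernel, so all fibers of $\pi_{E_0}|_X$ have the same dimension, namely $\dim(K)$. Hence all fibers are singletons if and only if any one of them is, and both conditions are equivalent to $K = \{0\}$.

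For the equivalence with (b), I would invoke two facts. First, the image $\pi_{E_0}(X)$ is itself a linear subspace of $\CC^{E_0}$ (being the image of a linear space under a linear map), and in particular it is Zariski closed, so $\pi_{E_0}(X) = X_{E_0}$. Second, by the rank-nullity theorem,
\[
\dim(X) = \dim(\pi_{E_0}(X)) + \dim(K) = \dim(X_{E_0}) + \dim(K).
\]
Since the rank of $\algmat{X}$ is $\dim(X)$ and the rank of $E_0$ in $\algmat{X}$ equals $\dim(X_{E_0})$ (by definition of the restriction to $E_0$), the subset $E_0$ is spanning if and only if $\dim(K) = 0$, which is again equivalent to $K = \{0\}$.

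There is no real obstacle here: once we recognize that linear images of linear spaces are Zariski closed, the proof is essentially a direct translation of rank-nullity into matroid language, and is cleaner than invoking \cref{lemma:algmatspanning}. The only thing to be mindful of is to explicitly note why ``finite'' fibers become ``singleton'' fibers in the linear setting, which is exactly the translate-of-kernel observation.
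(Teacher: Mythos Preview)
Your proposal is correct. Both your argument and the paper's hinge on the observation that the fibers of $\pi_{E_0}|_X$ are affine translates of the kernel $K$, which immediately gives (a) $\Leftrightarrow$ (c). The difference is in how (b) is tied in: the paper simply invokes \cref{lemma:algmatspanning} (spanning $\Leftrightarrow$ generic fiber finite) and then notes that a finite affine subspace is a point, whereas you bypass \cref{lemma:algmatspanning} entirely and compute the rank of $E_0$ directly as $\dim X_{E_0} = \dim X - \dim K$ via rank--nullity. Your route is more self-contained and avoids the fiber dimension theorem lurking behind \cref{lemma:algmatspanning}; the paper's route is terser because it recycles an already-established lemma. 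Neither approach has any advantage beyond taste here.
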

\begin{proof}
    This follows from \cref{lemma:algmatspanning} and the observation that the projection $\pi_{E_0} : X \to \CC^{E_0}$ is a linear transformation, and hence its nonempty fibers are pairwise isomorphic affine subspaces.
\end{proof}

\subsection{The rigidity matroid}\label{subsection:rigiditymatroid}

In this subsection we recall some basic notions from combinatorial rigidity theory. This exposition is nonstandard for two (related) reasons: we work over the complex field $\CC$ instead of $\RR$, and we mainly work with the algebraic representation of the rigidity matroid given by the measurement variety, rather than the linear representation given by the rigidity matrix of a generic framework. For a general reference on rigidity theory see, for example,~\cite{jordan.whiteley_2017} and~\cite{schulze.whiteley_2017}.

\subsubsection{Basic notions}
Let us fix a graph $G = (V,E)$ for the remainder of this section. Given a mapping $p: V \rightarrow \CC^d$, we say that the pair $(G,p)$ is a \emph{realization} of $G$ in $\CC^d$, or that $(G,p)$ is a \emph{framework} in $\CC^d$. We denote the set of functions from $V$ to $\CC^d$ by $(\CC^{d})^V$, and call the elements of this space \emph{configurations} of $V$. 
A framework $(G,p)$ in $\CC^d$ is \emph{generic} if the coordinates of $p$ do not satisfy any nonzero polynomial with rational coefficients. Using our earlier notation, this means that $p$ is $(\CC^{d})^V$-generic, where we think of $(\CC^{d})^V$ as an affine variety.

For a pair of vertices $u,v \in V$, we let $m_{uv} : (\CC^{d})^V \rightarrow \CC$ denote the squared distance map \[m_{uv}(p) = \sum_{i=1}^d {({p(u)}_i - {p(v)}_i)}^2,\] and we define the \emph{edge measurement map} $m_{d,G} : (\CC^{d})^V \rightarrow \CC^E$ as \[m_{d,G}(p) = {(m_{uv}(p))}_{uv \in E}\] Two realizations $(G,p)$ and $(G,q)$ in $\CC^d$ are \emph{equivalent} if $m_{d,G}(p) = m_{d,G}(q)$. 

The \emph{$d$-dimensional measurement variety} of $G$, denoted by $M_{d,G}$, is the Zariski closure of the image of $m_{d,G}$. This is a homogeneous irreducible affine variety, and by \cref{theorem:constructiblesetsoverQ}, it is defined over $\QQ$. 
The \emph{($d$-dimensional generic) rigidity matroid} of $G$, which we denote by $\mathcal{R}_d(G)$, is the algebraic matroid $\algmat{M_{d,G}}$ associated to $M_{d,G}$.

The \emph{rigidity matrix} of a framework $(G,p)$, denoted by $R(G,p) \in \CC^{E \times dn}$, is the Jacobian of $m_{d,G}$ evaluated at $p$. By combining \cref{theorem:tangentspacematroid}, \cref{lemma:linearalgebraicmatroid}(a), and \cref{lemma:genericproperties}(c), we can deduce the following characterization of $\mathcal{R}_d(G)$, which is commonly taken as its definition.
\begin{proposition}
    For a graph $G$ and any generic framework $(G,p)$ in $\CC^d$, $\mathcal{R}_d(G)$ is the row matroid of $R(G,p)$.
\end{proposition}

We use $r_d(G)$ to denote the rank of $\mathcal{R}_d(G)$, which is equal to the dimension of $M_{d,G}$. We say that $G$ is \emph{$\mathcal{R}_d$-independent} if $r_d(G) = |E(G)|$. Similarly, we say that $G$ is an \emph{$\mathcal{R}_d$-circuit} if it has no isolated vertices and it is not $\mathcal{R}_d$-independent but every proper subgraph of $G$ is $\mathcal{R}_d$-independent. The graph $G$ is \emph{\rig{d}} if $E$ is spanning in $\mathcal{R}_d(K_V)$, the rigidity matroid of the complete graph on $V$. When $n \geq d+1$, this is equivalent to the condition that $r_d(G) = dn - \binom{d+1}{2}$. (If $n \leq d+1$, then $G$ is \rig{d} if and only if it is a complete graph.) An edge $e$ of $G$ is an \emph{$\mathcal{R}_d$-bridge} if $r_d(G) = r_d(G-e) + 1$, or equivalently, if $e$ is not contained in any subgraph of $G$ that is an $\mathcal{R}_d$-circuit. A pair of vertices $\{u,v\}$ is \emph{$\mathcal{R}_d$-linked} in $G$ if $r_d(G + uv) = r_d(G)$. When $u$ and $v$ are nonadjacent in $G$, this means that there is an $\mathcal{R}_d$-circuit in $G+uv$ that contains the edge $uv$, or equivalently that $uv$ is not an $\mathcal{R}_d$-bridge in $G+uv$. Finally, we say that a graph $G$ without isolated vertices is \emph{$\mathcal{R}_d$-connected} if $\mathcal{R}_d(G)$ is a connected matroid. The \emph{$\mathcal{R}_d$-connected components} of $G$ are the subgraphs induced by the connected components of $\mathcal{R}_d(G)$.

\subsubsection{Equilibrium stresses}

Let $(G,p)$ be a framework in $\CC^d$. A vector $\omega \in \CC^E$ is an \emph{equilibrium stress}, or simply a \emph{stress} of $(G,p)$ if $\omega^T R(G,p) = 0$; that is, if $\omega$ lies in the cokernel of $R(G,p)$. We call this cokernel the \emph{space of stresses} of $(G,p)$ and denote it by $S(G,p)$. Note that $\dim(S(G,p)) = |E| - r_d(G)$ for all generic realizations of $G$ in $\CC^d$. In particular, $G$ is an $\mathcal{R}_d$-circuit if and only if every generic realization $(G,p)$ has, up to scale, a single nonzero stress, and this stress is nonzero on all edges of $G$. Similarly, an edge $e \in E$ is an $\mathcal{R}_d$-bridge in $G$ if and only if for every generic realization $(G,p)$, every stress $\omega \in S(G,p)$ satisfies $\omega_e=0$. Finally, a nonadjacent pair of vertices $\{u,v\}$ is $\mathcal{R}_d$-linked in $G$ if and only if every generic realization $(G+uv,p)$ has a stress that is nonzero on $uv$.

The next lemma gives a geometric interpretation of $S(G,p)$ for generic frameworks.
\begin{lemma}\label{lemma:stresstangentspace}(E.g.,~\cite[Lemma 2.21]{gortler.etal_2010})
    Let $(G,p)$ be a framework in $\CC^d$, and let $x = m_{d,G}(p)$. We have $S(G,p) \supseteq {T_x(M_{d,G})}^\perp$, with equality if $(G,p)$ is generic. 
\end{lemma}
\begin{proof}
    The space of stresses is just the cokernel of $R(G,p)$, which by elementary linear algebra is the orthogonal complement of the column space of $R(G,p)$. This column space, being the image of the differential of $m_{d,G}$ at $p$, is contained in $T_x(M_{d,G})$. Moreover, if $(G,p)$ is generic, then by \cref{lemma:genericproperties}(c) the column space is equal to $T_{x}(M_{d,G})$. The statement follows by taking orthogonal complements.
\end{proof}

From \cref{lemma:stresstangentspace} it is easy to deduce the following result, originally proved by Connelly.

\begin{theorem}\label{theorem:connelly} (\cite{connelly_2005})
    Let $(G,p)$ and $(G,q)$ be equivalent frameworks in $\CC^d$. If $(G,p)$ is generic, then $S(G,p) \subseteq S(G,q)$. %
\end{theorem}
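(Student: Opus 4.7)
The plan is to follow the sketch hinted at in the paper: combine \cref{lemma:stresstangentspace} (which gives a geometric description of $S(G,p)$ via tangent hyperplanes) with the surjectivity statement in \cref{lemma:genericproperties}(d), applied across the fiber of the edge measurement map over $m_{d,G}(p)$.

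First, set $x = m_{d,G}(p)$; by equivalence we also have $m_{d,G}(q) = x$, so both $p$ and $q$ lie in the fiber of $m_{d,G}$ over $x$. Since $(G,p)$ is generic, i.e., $p$ is $\CC^{nd}$-generic, \cref{lemma:genericimage}(a) applied to the polynomial map $m_{d,G} : \CC^{nd} \to \CC^E$ (which has rational coefficients and satisfies $\overline{m_{d,G}(\CC^{nd})} = M_{d,G}$) shows that $x$ is $M_{d,G}$-generic. In particular $x$ is a smooth point of $M_{d,G}$, and \cref{lemma:stresstangentspace} (applied to $(G,p)$) yields
\[
S(G,p) \;=\; T_x(M_{d,G})^\perp.
\]

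Next, I want to show the same equality for $q$. The ambient space $X = \CC^{nd}$ is irreducible, defined over $\QQ$, and smooth at every point. Thus \cref{lemma:genericproperties}(d), applied with this $X$ and $f = m_{d,G}$, gives that the differential
\[
d(m_{d,G})_q : T_q(\CC^{nd}) \longrightarrow T_{x}(M_{d,G})
\]
is surjective, even though $q$ itself need not be generic. Now $T_q(\CC^{nd}) = \CC^{nd}$ and the matrix representing this differential is precisely the rigidity matrix $R(G,q)$, so the column span of $R(G,q)$ equals $T_x(M_{d,G})$. By the definition of a stress, $S(G,q)$ is the cokernel of $R(G,q)$, i.e., the orthogonal complement of its column span, so
\[
S(G,q) \;=\; T_x(M_{d,G})^\perp \;=\; S(G,p),
\]
as required.

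The key step, and the only potentially subtle one, is invoking \cref{lemma:genericproperties}(d) at the (possibly non-generic) point $q$. This is exactly what that lemma is designed to handle: generically one automatically gets the surjectivity of the differential (part (c)), but to spread surjectivity to \emph{all} points of the fiber over a generic image one needs the hypothesis that every point of the source is smooth. That hypothesis is automatic here since the source is affine space. Everything else is a routine translation between the algebraic-geometric (tangent spaces to $M_{d,G}$) and linear-algebraic (cokernel of $R(G,q)$) descriptions of equilibrium stresses.
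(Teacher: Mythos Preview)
Your proof is correct and follows exactly the approach the paper indicates: the paper's justification is the one-line remark that the theorem is obtained by ``combining \cref{lemma:genericproperties}(e) with \cref{lemma:stresstangentspace}'' (the ``(e)'' there is evidently a typo for ``(d)''), and you have simply unpacked that combination in detail. The only thing worth noting is that your second displayed equation already reproves \cref{lemma:stresstangentspace} for $q$, so you could streamline slightly by saying that the argument of that lemma goes through verbatim once \cref{lemma:genericproperties}(d) supplies the surjectivity of $d(m_{d,G})_q$.
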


When expanded, the condition $\omega^T R(G,p) = 0$ yields the \emph{equilibrium conditions} 
\begin{equation}\label{eq:equilibrium}
    \sum_{uv \in E}\omega_{uv}(p(u) - p(v)) = 0, \hspace{1em} \forall v \in V.
\end{equation}
We can conveniently represent this linear system using the matrix $\Omega \in \CC^{V \times V}$ defined by
\[\Omega_{uv} = \begin{cases}
- \omega_{uv} & \text{if } u \neq v \text{ and } uv \in E \\ 0 & \text{if } u \neq v \text{ and } uv \notin E \\ {\displaystyle \sum_{uw \in E}} \omega_{uw} & \text{if } u = v
\end{cases}\]
We call $\Omega$ the \emph{stress matrix} associated to $\omega$. If we represent the configuration $p$ as a matrix $P \in \CC^{V \times d}$, then $\omega \in S(G,p)$ holds if and only if $\Omega P = 0$. More precisely, the equilibrium conditions \cref{eq:equilibrium} are satisfied for $\omega$ and $(G,p)$ at vertex $v$ if and only if the row of $\Omega P$ corresponding to $v$ is zero. %

Given an arbitrary vector $\omega \in \CC^E$, the \emph{($d$-dimensional) stress kernel} of $\omega$ is the set \[\stresskernel{d}{G}(\omega) = \{p \in (\CC^{d})^V: \omega \text{ is a stress of } (G,p)\}.\] Then $\stresskernel{1}{G}$ is just the kernel of $\Omega$, and since the equilibrium conditions \cref{eq:equilibrium} give an independent system of equations for each coordinate vector, we have $\stresskernel{d}{G}(\omega) = {(\stresskernel{1}{G}(\omega))}^d$. In particular, $\stresskernel{d}{G}(\omega)$ is a linear space. 

Given a framework $(G,p)$ in $\CC^d$, we define the \emph{($d$-dimensional) shared stress kernel} of $(G,p)$ as
\[\sharedstresskernel{d}(G,p) = \{q \in (\CC^d)^V : S(G,p) \subseteq S(G,q)\},\] and similarly, the \emph{one-dimensional shared stress kernel} of $(G,p)$ as
\[K_1(G,p) = \{q \in \CC^V : S(G,p) \subseteq S(G,q)\}.\] 
Formally, we have
\[ \sharedstresskernel{d}(G,p) = \bigcap_{\omega \in S(G,p)} \stresskernel{d}{G}(\omega), \hspace{1em} \text{ and } \hspace{1em} K_1(G,p) = \bigcap_{\omega \in S(G,p)} \stresskernel{1}{G}(\omega).\]
This shows that both $\sharedstresskernel{d}(G,p)$ and $K_1(G,p)$ are linear spaces, and we have $\sharedstresskernel{d}(G,p) = {(K_1(G,p))}^d$.

We say that a configuration $q \in (\CC^d)^V$ is an \emph{affine image} of $p$ if $q(v) = Ap(v) + b$ for all $v \in V$, for some matrix $A \in \CC^{V \times V}$ and vector $b \in \CC^V$. We let $\affine{G}{p} \subseteq (\CC^d)^V$ denote the set of affine images of $p$. This is a linear subspace of $(\CC^d)^V$, and if $(G,p)$ is affinely spanning, then $\dim(\affine{G}{p}) = d(d+1)$. Note that if $\omega \in S(G,p)$ is a stress of $(G,p)$, then the equilibrium conditions \cref{eq:equilibrium} are also satisfied for $\omega$ and $(G,q)$ for every $q \in \affine{G}{p}$. It follows that $\affine{G}{p} \subseteq \stresskernel{d}{G}(\omega)$, and indeed we have the containments 
\begin{equation}\label{eq:stresskernelcontainment}
    \affine{G,p} \subseteq \sharedstresskernel{d}(G,p) \subseteq \stresskernel{d}{G} \subseteq (\CC^d)^V.
\end{equation}

By general principles, the dimension of $K_1(G,p)$ is the same for every generic framework $(G,p)$ of $G$ in $\CC^d$, see~\cite[Remark 4.2]{gortler.etal_2010}.\footnote{In~\cite{gortler.etal_2010}, the authors define $K_1(G,p)$ for real frameworks and stresses. However, their argument also works in the complex setting, and it also shows that the dimension of $K_1(G,p)$ for generic $(G,p)$ is the same in the two settings.} Following~\cite{gortler.etal_2010}, we call this number the \emph{shared stress nullity} of $G$ in $d$ dimensions, and denote it by $\stressnullity{G}$. It follows from \cref{eq:stresskernelcontainment} by taking dimensions and then dividing by $d$ that $d+1 \leq \stressnullity{G} \leq n$ whenever $G$ has at least $d+1$ vertices. 

We will need the following characterization of graphs with large shared stress nullity. The proof is given in \cref{appendix:prelims}.

\begin{lemma}\label{lemma:largenullity} \emph{(Characterization of large shared stress nullity.)}\newline
    Let $G$ be a graph on $n$ vertices.
    \begin{enumerate}
        \item We have $\stressnullity{G} = n$ if and only if $G$ is $\mathcal{R}_d$-independent.
        \item We have $\stressnullity{G} = n-1$ if and only if $G$ consists of a copy of $K_{d+2}$, as well as possibly some $\mathcal{R}_d$-bridges and isolated vertices.
    \end{enumerate}
\end{lemma}

\subsubsection{Globally linked pairs and global rigidity}
A pair of vertices $\{u,v\} \subseteq V$ is \emph{globally linked in $G$ in $\CC^d$} if for every generic framework $(G,p)$ in $\CC^d$ and every framework $(G,q)$ in $\CC^d$, $m_{d,G}(p) = m_{d,G}(q)$ implies $m_{uv}(p) = m_{uv}(q)$. In other words, if $(G,p)$ and $(G,q)$ are equivalent, then so are $(G+uv,p)$ and $(G+uv,q)$.
Similarly, a pair of vertices $\{u,v\} \subseteq V$ is \emph{globally linked in $G$ in $\RR^d$} if for every generic framework $(G,p)$ in $\RR^d$ and every framework $(G,q)$ in $\RR^d$, $m_{d,G}(p) = m_{d,G}(q)$ implies $m_{uv}(p) = m_{uv}(q)$. 

Being globally linked in $G$ in $\CC^d$ clearly implies being globally linked in $G$ in $\RR^d$, and it is unclear whether the two notions are, in fact, equivalent. %
We note that (in contrast with the complex case) it can happen that the condition defining global linkedness in $\RR^d$ holds for some, but not all generic frameworks of $(G,p)$ of $G$ in $\RR^d$. If it holds for some, then $\{u,v\}$ is said to be \emph{weakly globally linked in $G$ in $\RR^d$}; see~\cite{jordan.villanyi_2023} for recent results regarding this notion. 

Interestingly, all these notions of global linkedness are known to be equivalent when we require them to hold for all pairs of vertices.

\begin{theorem}\cite[Lemma 3.2]{jordan.villanyi_2023} and~\cite[Theorem 1]{gortler.thurston_2014a} \emph{(Definitions of globally rigid graphs.)} \newline
    Let $G$ be a graph. The following are equivalent.
    \begin{enumerate}
        \item Every pair of vertices is weakly globally linked in $G$ in $\RR^d$.
        \item Every pair of vertices is globally linked in $G$ in $\RR^d$.
        \item Every pair of vertices is globally linked in $G$ in $\CC^d$.
    \end{enumerate}
\end{theorem}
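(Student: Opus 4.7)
The implications (c) $\Rightarrow$ (b) $\Rightarrow$ (a) are immediate from the definitions: any real realization equivalent to $(G,p)$ is also a complex realization equivalent to $(G,p)$, so $\CC^d$-global linkedness implies $\RR^d$-global linkedness; and the universal quantifier ``for every generic $(G,p)$'' in the definition of globally linked trivially implies the existential quantifier ``for some generic $(G,p)$'' in the definition of weakly globally linked. Hence the substantive content is the implication (a) $\Rightarrow$ (c), which I plan to establish by bridging the real/complex divide through global rigidity.

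The first step is to reformulate condition (a) as: there exists a \emph{single} generic realization $(G,p)$ in $\RR^d$ which is globally rigid, meaning every real realization of $G$ equivalent to $(G,p)$ is congruent to $(G,p)$. The nontrivial direction here requires amalgamating the per-pair witnessing realizations $(G,p_{uv})$ for each pair $\{u,v\}$ into a single realization $(G,p)$ that simultaneously resolves all pairs. This is essentially the content of \cite[Lemma 3.2]{jordan.villanyi_2023}, which I would invoke as a black box. Morally, for each pair $\{u,v\}$ the set of generic realizations witnessing weak global linkedness of $\{u,v\}$ is large enough in the generic locus (in a suitable semialgebraic sense) that the finite intersection of such sets across the $\binom{n}{2}$ pairs remains nonempty.

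The second step invokes \cite[Theorem 1]{gortler.thurston_2014a} together with the stress-matrix characterization of Gortler, Healy, and Thurston: global rigidity is ``field-independent,'' in the sense that $G$ admits a generic globally rigid realization in $\RR^d$ if and only if $G$ is globally rigid in $\RR^d$, which holds if and only if $G$ is globally rigid in $\CC^d$. The link is provided by the fact that having a generic stress matrix of maximal rank $n-d-1$ is a rational condition on the coordinates of $p$ and is therefore insensitive to the base field. Since global rigidity in $\CC^d$ is precisely the statement that every pair of vertices is globally linked in $\CC^d$, combining the two steps yields (a) $\Rightarrow$ (c).

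The main obstacle is the first step: the ``amalgamation'' that turns per-pair weak global linkedness into weak global rigidity is subtle because the set of generic realizations witnessing weak global linkedness for a given pair is not obviously Zariski open in the generic locus, so one has to work semialgebraically and exploit the finiteness of equivalence classes modulo congruence for rigid $(G,p)$. The subsequent Gortler--Thurston step is by now standard in the literature and I would use it as a black box.
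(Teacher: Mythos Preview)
The paper does not give its own proof of this theorem; it states it as a cited result, attributing (a)$\Leftrightarrow$(b) to \cite[Lemma 3.2]{jordan.villanyi_2023} and (b)$\Leftrightarrow$(c) to \cite[Theorem 1]{gortler.thurston_2014a}. Your outline is consistent with that attribution: you correctly identify the trivial direction (c)$\Rightarrow$(b)$\Rightarrow$(a), and for the substantive direction you invoke the Jord\'an--Vill\'anyi amalgamation step and then the Gortler--Thurston field-independence of global rigidity (via the stress-matrix rank criterion) as black boxes, which is exactly what the paper does. There is nothing further to compare against.
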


If any of the above three equivalent conditions hold, then we say that $G$ is \emph{\globrig{d}}. By unraveling condition \textit{(c)}, we see that $G$ is globally $d$-rigid if and only if for every generic realization $(G,p)$ in $\CC^d$, $m_{d,G}(p) = m_{d,G}(q)$ implies $m_{uv}(p) = m_{uv}(q)$ for all $u,v \in V(G)$.

As we noted in the introduction, it is a folklore result that a graph is \globrig{1} if and only if it is $2$-connected. The combinatorial characterization of \globrig{2} graphs is due to Jackson and Jordán. We will need the following form of their result.

\begin{theorem}\label{theorem:2dimglobrigidchar} (\cite{jackson.jordan_2005}, see also~\cite[Theorem 5.1]{jackson.etal_2006}) \emph{(Jackson-Jordán theorem.)} \newline
    Let $G$ be a graph on at least four vertices. The following are equivalent.
    \begin{enumerate}
        \item $G$ is \globrig{2}.
        \item $G$ is $3$-connected and $\mathcal{R}_2$-connected.
        \item $G$ is $3$-connected and $\mathcal{R}_2$-bridgeless.
    \end{enumerate}
\end{theorem}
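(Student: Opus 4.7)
The plan is to prove the chain of implications (a) $\Rightarrow$ (c) $\Rightarrow$ (b) $\Rightarrow$ (a), with the last implication being the real content of the theorem.

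For (a) $\Rightarrow$ (c), I would invoke Hendrickson's classical necessary conditions for global rigidity. The $3$-connectivity of any globally \rig{2} graph on at least four vertices follows from a reflection argument: if $\{u,v\}$ separates $G$ into parts $G_1, G_2$, then a generic realization can be reflected across the line through $p(u),p(v)$ on one side only, producing an equivalent but non-congruent realization once both sides are non-degenerate (which is forced by $|V(G)| \geq 4$ and $3$-connectivity issues). Redundant $\mathcal{R}_2$-rigidity (i.e., no $\mathcal{R}_2$-bridges) follows because if $e \in E(G)$ is an $\mathcal{R}_2$-bridge then $G - e$ has a nontrivial infinitesimal motion that does not preserve the length of $e$, which integrates to a one-parameter family of equivalent realizations of $G-e$ with varying measurement on $e$, contradicting that $(G,p)$ determines all pairwise distances.

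For (c) $\Rightarrow$ (b), the task is to show that under $3$-connectivity, the absence of $\mathcal{R}_2$-bridges already forces $\mathcal{R}_2$-connectivity. The idea is to analyze a nontrivial separation $(E_1,E_2)$ of $\mathcal{R}_2(G)$. A separation of the rigidity matroid corresponds to a decomposition of $G$ into edge-disjoint subgraphs whose rigid closures overlap in a \rig{2} subgraph on at most $2$ vertices (this is the matroidal counterpart of Whiteley's gluing lemma, \cref{lemma:whiteleygluing}). In particular, each nontrivial $\mathcal{R}_2$-connected component spans a vertex set whose pairwise intersections with other components have size at most $2$. Combining this with $3$-connectivity and the non-existence of bridges forces all edges into a single $\mathcal{R}_2$-connected component. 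The implication (b) $\Rightarrow$ (c) is immediate since a connected matroid (on $\geq 2$ elements) has no coloops.

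The main work is in (b) $\Rightarrow$ (a), which I would handle by induction using the Berg–Jordán ear decomposition: every $\mathcal{R}_2$-connected graph can be built from $K_4$ by a sequence of \emph{ear additions}, where each ear addition attaches a path to two existing vertices and is an inverse of a $0$- or $1$-reduction. The base case $K_4$ is globally \rig{2} by direct inspection. The inductive step is the hard part: one must show that if $G'$ is globally \rig{2}, $3$-connected, and $G$ is obtained from $G'$ by adding an ear in a way that keeps $G$ $3$-connected, then $G$ is globally \rig{2}. Here I would use Connelly's sufficient condition based on stress matrices: produce an equilibrium stress of a generic realization of $G$ whose stress matrix has corank exactly $d+1 = 3$, by lifting a stress matrix of $G'$ certifying its global rigidity and extending it across the new ear (whose internal vertices of degree $2$ can be placed as affine combinations of their neighbors to preserve the kernel dimension).

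The principal obstacle is the inductive step in (b) $\Rightarrow$ (a): one must simultaneously preserve $3$-connectivity and the maximum-rank stress matrix condition through the ear addition, and verify that a $0$- or $1$-extension really can be inverted within the class of $3$-connected $\mathcal{R}_2$-connected graphs. This is exactly where Jackson and Jordán's combinatorial analysis (admissible reductions avoiding loss of $3$-connectivity) does the heavy lifting; the rest of the argument is essentially a careful bookkeeping exercise combining their structural theorem with Connelly's sufficient condition.
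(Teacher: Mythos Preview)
The paper does not give a proof of \cref{theorem:2dimglobrigidchar}; it is quoted in the preliminaries as a known result of Jackson and Jord\'an \cite{jackson.jordan_2005} (see also \cite{jackson.etal_2006}) and used as a black box, for instance in the proof of \cref{theorem:2dimchar}. So there is no in-paper argument to compare your proposal against.

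That said, your outline tracks the original Jackson--Jord\'an strategy reasonably well: Hendrickson's necessary conditions for (a) $\Rightarrow$ (c), a structural reduction for (c) $\Leftrightarrow$ (b), and an inductive construction combined with Connelly's sufficient stress-matrix criterion for (b) $\Rightarrow$ (a). You are right that the last implication carries essentially all of the content and that the hard combinatorics lies in producing, within the class of $3$-connected $\mathcal{R}_2$-connected graphs, an admissible $1$-reduction (or edge deletion) at each step.

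One genuine weak point: your sketch of (c) $\Rightarrow$ (b) is not correct as written. The assertion that a nontrivial separation $(E_1,E_2)$ of $\mathcal{R}_2(G)$ forces the rigid closures of the two sides to overlap in at most two vertices is not what the gluing lemma gives; a quick rank count shows that for a rigid $G$ the two sides must share \emph{at least} two vertices, and examples show the overlap can be larger. In the original proof one first argues that a $3$-connected graph with no $\mathcal{R}_2$-bridges is in fact redundantly \rig{2} (already a nontrivial step, since ``no bridges'' does not by itself imply ``rigid''), and then invokes a separate structural lemma to deduce $\mathcal{R}_2$-connectivity from $3$-connectivity plus redundant rigidity. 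This step does not reduce to elementary matroid bookkeeping in the way your sketch suggests.
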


One of the main results of~\cite{gortler.etal_2010} is the following characterization of \globrig{d} graphs. Originally, it was stated for real frameworks; it was subsequently extended to the complex setting in~\cite{gortler.thurston_2014a}. Part \textit{(b)} is often formulated in terms of the maximum rank of stress matrices, but the following equivalent form will be more convenient for us. %

\begin{theorem}\label{theorem:ght} (\cite{gortler.etal_2010}, see also~\cite[Theorem 1]{gortler.thurston_2014a}) \emph{(Gortler-Healy-Thurston theorem.)}\newline
    Let $G$ be a graph on at least $d+2$ vertices, and let $(G,p)$ be a generic realization in $\CC^d$. The following are equivalent.
    \begin{enumerate}
        \item $G$ is \globrig{d},
        \item there is a stress $\omega \in S(G,p)$ for which $\stresskernel{d}{G}(\omega) = \affine{G}{p}$,
        \item $\sharedstresskernel{d}(G,p) = \affine{G}{p}$,
        \item $\stressnullity{G} = d+1$.
    \end{enumerate}
\end{theorem}

\subsubsection{Small separators and rigidity}
To finish this section, we collect some results about rigidity properties of graphs with small vertex separators.
The first one is a general form of the so-called gluing lemma.
\begin{lemma}\label{lemma:whiteleygluing}\cite[Lemma 11.1.9]{whlong} \emph{(Gluing lemma.)}\newline
    Let $G$ be the union of the graphs $G_1$ and $G_2$.
    \begin{enumerate}
        \item If $|V(G_1) \cap V(G_2)| \geq d$ and $G_1$ and $G_2$ are both \rig{d}, then $G$ is \rig{d}.
        \item If $G_1 \cap G_2$ is \rig{d} and $G_1$ and $G_2$ are both $\mathcal{R}_d$-independent, then $G$ is $\mathcal{R}_d$-independent.
        \item If $|V(G_1) \cap V(G_2)| \leq d-1, u \in V(G_1) - V(G_2)$, and $v \in V(G_2) - V(G_1)$, then $\{u,v\}$ is not $\mathcal{R}_d$-linked in $G$.
    \end{enumerate}
\end{lemma}
The following is a well-known consequence of \cref{lemma:whiteleygluing}(b).

\begin{lemma}\label{lemma:circuitseparatingpair}
    If $C = (V,E)$ is an $\mathcal{R}_d$-circuit, then $C$ is $2$-connected. Furthermore, if $u,v \in V$ is separating vertex pair, then $uv \notin E$.
\end{lemma}

Let $G$ be a graph, and let $u,v \in V(G)$ be a nonadjacent separating pair of vertices. We say that the graphs $G_1,G_2$ are obtained by a \emph{$2$-separation of $G$ along $\{u,v\}$} if $V(G_1) \cap V(G_2) = \{u,v\}$, $uv \in E(G_1) \cap E(G_2)$ and $G = G_1 \cup G_2 - uv$. (The inverse of this, where we glue a pair of graphs $G_1$ and $G_2$ along a pair of edges and then delete the identified edge, is called the \emph{$2$-sum} operation.) 

\begin{lemma}\label{lemma:mconnected2sum}\cite[Lemma 4.9]{garamvolgyi.jordan_2023} and~\cite[Lemma 10]{grasegger.etal_2022} 
    \emph{($\mathcal{R}_d$-connectivity and $2$-separations.)}\newline
    Let $G$ be a graph, let $u,v \in V(G)$ be a separating vertex pair with $uv \notin E(G)$, and let $G_1,G_2$ be obtained by a $2$-separation of $G$ along $\{u,v\}$. The following are equivalent.
    \begin{enumerate}
        \item $G_1$ and $G_2$ are both $\mathcal{R}_d$-connected;
        \item $G$ is $\mathcal{R}_d$-connected;
        \item $G+uv$ is $\mathcal{R}_d$-connected.
    \end{enumerate}
    Moreover, $G$ is an $\mathcal{R}_d$-circuit if and only if $G_1$ and $G_2$ are both $\mathcal{R}_d$-circuits.
\end{lemma}

The following lemma collects some useful observations about the space of stresses of graphs with small separators. The proof is given in \cref{appendix:prelims}.

\begin{lemma}\label{lemma:ksum}
    Let $G$ be the union of the induced subgraphs $G_1 = (V_1,E_1), G_2 = (V_2,E_2)$. Consider a realization $(G,p)$ in $\CC^d$, and let $(G_i,p_i)$ be the subframework corresponding to $G_i$, for $i \in \{1,2\}$. Finally, let $S_i$ denote the subspace of $S(G,p)$ of stresses supported on $E_i$, for $i \in \{1,2\}$.
    \begin{enumerate}
        \item If $G_1 \cap G_2$ is \rig{d} and $(G,p)$ is generic, then $S_1 \cup S_2$ generates $S(G,p)$.
        \item If $G_1 \cap G_2$ is a complete graph on at most $d+1$ vertices, and $(G,p)$ is a framework such that $\{p(v) : v \in V(G_1 \cap G_2)\}$ is affinely independent, then $S(G,p) = S_1 \oplus S_2$.
        \item If $V_1 \cap V_2 = \{u,v\}$, where $\{u,v\}$ is $\mathcal{R}_d$-linked in at most one of $G_1$ and $G_2$, and $(G_i,p_i)$ is generic for both $i \in \{1,2\}$, then $S(G,p) = S_1 \oplus S_2$.
    \end{enumerate}
\end{lemma}

\section{The generic stress matroid}\label{section:genericstressmatroid}

In this section we introduce and study the dual variety of the $d$-dimensional measurement variety, which we call the $d$-dimensional generic stress variety. As the name suggests, the generic stress variety captures the global geometry of equilibrium stresses of generic realizations. We shall be particularly interested in the algebraic matroid associated to this variety, which we call the $d$-dimensional generic stress matroid of $G$. 

In \cref{subsection:algebraicdual} we recall the notion of the dual variety and consider the associated \emph{algebraic dual matroid} in a general setting. While duality of varieties is a classical topic, we are not aware of other works that consider dual varieties from the perspective of algebraic matroids. In \cref{subsection:stressmatroid}, we specialize to the setting of rigidity theory and investigate the algebraic dual matroid of the rigidity matroid (as represented by the measurement variety), and in \cref{subsection:spanningindependent} we obtain some results about the independent sets and the spanning sets of this matroid that will play an important role in our investigation of \strongly{d} vertex pairs in the next section. Finally, in \cref{subsection:globallyrigidstressmatroid} we consider the case of globally rigid graphs in detail. In this case, the dual of the stress matroid has a direct description in terms of the edge measurements of affine images of a generic framework. We use this description to give a complete description of the one-dimensional generic stress matroid of any graph.

We refer the reader to \cref{fig:diag2} for a summary of the different spaces and matroids considered throughout this section.

\subsection{Dual varieties and the algebraic dual matroid}\label{subsection:algebraicdual}

We start by defining the dual variety and stating the \emph{reflexivity theorem}, along with some of its consequences. For a general reference on this topic, see~\cite[Chapter 2]{fischer.piontkowski_2001}, with the caveat that the authors use the language of projective varieties, while we are working with the affine cones of such varieties.

Let $X \subseteq \CC^E$ be an irreducible homogeneous affine variety, and let $X_{sm}$ denote the smooth locus of $X$. The \emph{conormal bundle} $C_X \subseteq \CC^E \times \CC^E$ of $X$ is the Zariski closure
of the set
\[\{(x,\omega) : x \in X_{sm}, \, \omega \in \CC^E, \, T_x(X) \subseteq \omega^\perp\}.\]
This is an irreducible affine variety of dimension $|E|$. Let $\pi_1$ and $\pi_2$ denote the projection of $\CC^E \times \CC^E$ onto the first and second factor, respectively. It is known that for every smooth point $x \in X_{sm}$, 
\begin{equation}\label{eq:conormalprojection}
    \pi_2(\pi_1^{-1}(x)) = {T_x(X)}^{\perp},
\end{equation}
see~\cite[Section 2.1.4]{fischer.piontkowski_2001}.
In other words, taking the Zariski closure in the definition of $C_X$ does not add new points over smooth points of $X$. The \emph{dual variety} of $X$ is defined as $X^* = \overline{\pi_2(C_X)}$. This is an irreducible homogeneous affine variety in $\CC^E$. It can be more concretely described as the Zariski closure of 
\begin{equation}\label{eq:dualdescription}
    \bigcup \left\{{T_x(X)}^\perp: x \in X_{sm}\right\}. 
\end{equation}

The \emph{reflexivity theorem} states that $C_{X^*}$, the conormal bundle of $X^*$, is the same as $C_X$ with the two factors switched:
\[C_{X^*} = \overline{\{(\omega,x) :  x \in X_{sm}, \, \omega \in \CC^E, \, T_x(X) \subseteq \omega^\perp\}}.\]
In particular, this implies that ${(X^*)}^* = X$. 

The following lemma shows that if an affine variety is defined over $\QQ$, then so is its dual. 
We give a proof sketch in \cref{appendix:prelims}.
\begin{lemma}\label{lemma:dualoverQ}
    Suppose that $X$ is defined over $\QQ$. Then we have the following: 
    \begin{enumerate}
        \item $C_X$ and $X^*$ are also defined over $\QQ$,
        \item $X^*$ can be written as the closure of
    \begin{equation}\label{eq:genericclosure}
        \bigcup \left\{{T_x(X)}^\perp : x \in X \text{ is } X\text{-generic}\right\} \vspace{-.3em}
    \end{equation}
    \item     whenever $\omega \in X^*$ is $X^*$-generic, there exists a $C_X$-generic pair $(x,\omega) \in C_X$. For every such pair $(x,\omega)$, $x$ is $X$-generic, $\omega$ is $X^*$-generic, and $\omega \in {T_x(X)}^\perp$. 
    \end{enumerate}
\end{lemma}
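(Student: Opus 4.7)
The plan is to realize the conormal bundle $C_X$ explicitly as the Zariski closure of a constructible set defined over $\QQ$, and to derive every other assertion from general facts about coordinate projections and generic points. Throughout, write $X_{\text{sm}}$ for the smooth locus of $X$.

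First I would fix rational polynomials $f = (f_1,\ldots,f_l)$ cutting out $X$, and consider the set
\[
    W = \{(x,\omega) \in X_{\text{sm}} \times \CC^E : \omega \text{ is a linear combination of the rows of } df_x\}.
\]
For $x \in X_{\text{sm}}$ the row span of $df_x$ equals $T_x(X)^\perp$, so $W$ is exactly the constructible set whose Zariski closure defines $C_X$. Now $X_{\text{sm}}$ is cut out inside $X$ by rank conditions on $df$ with rational coefficients, and $W$ is the image of the polynomial map $X_{\text{sm}} \times \CC^l \to \CC^E \times \CC^E$, $(x,a) \mapsto (x, a \cdot df_x)$, which also has rational coefficients. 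So \cref{theorem:constructiblesetsoverQ}(a,b) gives that both $W$ and $C_X = \overline{W}$ are defined over $\QQ$, and the same reasoning applied to $\pi_2$ gives that $X^* = \overline{\pi_2(C_X)}$ is defined over $\QQ$.

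Next I would analyze the two coordinate projections $\pi_1, \pi_2 : C_X \to \CC^E$. Their image closures are $X$ and $X^*$ respectively: for $\pi_1$, because $\pi_1(W) = X_{\text{sm}}$ is Zariski dense in $X$; for $\pi_2$, by definition. Applying \cref{theorem:constructiblesetsoverQ}(c) to $W \subseteq C_X$ yields a proper subvariety $C'_X \subsetneq C_X$ defined over $\QQ$ with $C_X \setminus C'_X \subseteq W$. Any $C_X$-generic pair $(x,\omega)$ avoids every proper $\QQ$-subvariety of $C_X$, so it must lie in $W$, giving $\omega \in T_x(X)^\perp$; moreover \cref{lemma:genericimage}(a) applied to $\pi_1$ and $\pi_2$ yields that $x$ is $X$-generic and $\omega$ is $X^*$-generic.

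The remaining claims follow quickly. For the union formula, ``$\supseteq$'' holds because every $X$-generic $x$ is smooth by \cref{lemma:genericproperties}(a), so $T_x(X)^\perp \subseteq X^*$. For ``$\subseteq$'', the $C_X$-generic pairs are Zariski dense in $C_X$, so their images under $\pi_2$ are dense in $X^*$; and by the previous paragraph each such image already lies in $\bigcup\{T_x(X)^\perp : x \text{ is } X\text{-generic}\}$, giving the claimed equality of closures. The final assertion is just \cref{lemma:genericimage}(b) applied to $\pi_2 : C_X \to X^*$, which produces a $C_X$-generic preimage of any $X^*$-generic $\omega$. The only non-formal step is the first: verifying that $W$ is $\QQ$-constructible requires some bookkeeping with minors of the rational Jacobian $df$, but this is the single place where the hypothesis that $X$ is defined over $\QQ$ actually enters the argument.
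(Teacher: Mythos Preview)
Your proof is correct and follows essentially the same route as the paper's sketch: both realize $C_X$ as the closure of a $\QQ$-constructible set, then push genericity through the coordinate projections via \cref{lemma:genericimage} and \cref{lemma:genericproperties}(a), and finally use density of $C_X$-generic pairs for the union formula. The only cosmetic difference is that the paper appeals to the general ``formula in the language of fields'' version of Chevalley's theorem to certify constructibility over $\QQ$, whereas you write down the explicit parametrization $(x,a)\mapsto (x,\,a\cdot df_x)$ from $X_{\text{sm}}\times\CC^l$; your version is more concrete and avoids first-order logic, but both are doing the same thing.
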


Let $X^*_{sm}$ denote the smooth locus of $X^*$. For a point $\omega \in X^*$, the \emph{contact locus} of $\omega$ in $X$ is the set $L_\omega(X) = \pi_1(\pi_2^{-1}(\omega))$. It follows from the reflexivity theorem and \cref{eq:conormalprojection} that if $\omega \in X^*_{sm}$, then $L_\omega(X) = {T_\omega(X^*)}^\perp$. In particular, in this case $L_\omega(X)$ is a linear space of dimension $|E| - \dim(X^*)$; this is sometimes called the \emph{linearity theorem}. The following lemma gives a useful description of the contact locus, under the modest condition that it contains a smooth point of $X$.

\begin{lemma}\label{lemma:contactlocusdecription}
    Fix $\omega \in X^*$. Suppose that $\omega \in X^*_{sm}$ is a smooth point and suppose that $L_\omega(X)$ contains a smooth point of $X$. Then we have
    \[L_\omega(X) = \overline{\{x \in X_{sm}: T_x(X) \subseteq \omega^\perp\}}.\]
    In particular, this holds if $X$ is defined over $\QQ$ and $\omega$ is $X^*$-generic.
\end{lemma}
\begin{proof}
    Since $\omega$ is a smooth point, we have that ${T_\omega(X^*)}^\perp = L_\omega(X)$, so $L_\omega(X)$ is a linear space; in particular, it is irreducible. Since $L_\omega(X)$ contains a smooth point of $X$, it follows that the smooth points are Zariski dense in it. For a smooth point $x \in X_{sm}$ we have \[x \in L_\omega(X) = \pi_1(\pi_2^{-1}(\omega))\] if and only if \[\omega \in \pi_2(\pi_1^{-1}(x)) = {T_x(X)}^\perp,\] or equivalently, if $T_x(X) \subseteq \omega^\perp$. The claim now follows from the density of smooth points. 
\end{proof}

It follows from the fiber dimension theorem that for $X^*$-generic $\omega$, $\dim(X^*) = |E| - \dim(L_\omega(X))$. Since $L_\omega(X) \subseteq X$, we obtain that $|E| \leq \dim(X) + \dim (X^*)$, and this inequality is strict whenever $X$ does not lie in a linear hyperplane of $\CC^E$.

We define the \emph{algebraic dual matroid} of $\algmat{X}$ to be $\algmat{X^*}$. Let us emphasize that the algebraic dual matroid is determined not by the matroid $\algmat{X}$ itself, but rather by its algebraic representation $X$. Also note that, since ${(X^*)}^* = X$, the algebraic dual of $\algmat{X^*}$ is $\algmat{X}$, so this construction is indeed a kind of duality.  However, in general, $\algmat{X}^*$ is not the dual matroid of $\algmat{X}$: indeed, if $X$ does not lie in a linear hyperplane, then 
\[\rk(\algmat{X^*}) = \dim(X^*) \geq |E| - \dim(X) + 1 = |E| - \rk(\algmat{X}) + 1 > \rk(\algmat{X}^*).\]
In fact, the dual of $\algmat{X^*}$ can be described as another algebraic matroid -- the one associated to the contact locus $L_\omega(X)$ for generic $\omega$. This observation plays an important role in this paper.  

\begin{theorem}\label{theorem:contactlocusmatroid}
    Let $X \subseteq \CC^E$ be an irreducible homogeneous affine variety defined over $\QQ$, and let $\omega \in X^*$ be an $X^*$-generic point. Then $\algmat{L_\omega(X)} = \algmat{X^*}^*$. %
\end{theorem}
\begin{proof}
    Recall that $L_\omega(X) = {T_\omega(X^*)}^\perp$. \cref{lemma:linearalgebraicmatroid}(b) implies that $\algmat{L_\omega(X)}$ is the dual matroid of $\algmat{T_{\omega}(X^*)}$, and it follows from \cref{theorem:tangentspacematroid} that the latter is just $\algmat{X^*}$.
\end{proof}

\begin{corollary}\label{corollary:dualrelationships}
    Let $X \subseteq \CC^E$ be an irreducible homogeneous affine variety. We have $\algmat{X}^* \preceq \algmat{X^*}$, and similarly, $\algmat{X^*}^* \preceq \algmat{X}$.
\end{corollary}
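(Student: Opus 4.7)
The plan is to deduce both inequalities as immediate consequences of \cref{theorem:contactlocusmatroid} combined with the reflexivity theorem $(X^*)^* = X$. The content of the corollary is essentially a rephrasing of these two facts in the language of matroid minors.

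First I would establish the inequality $\algmat{X^*}^* \preceq \algmat{X}$. Pick a point $\omega \in X^*$ to which \cref{theorem:contactlocusmatroid} applies. (The corollary does not assume $X$ is defined over $\QQ$, but the conclusion of \cref{theorem:contactlocusmatroid} still holds for $\omega$ in a nonempty Zariski open subset of $X^*$: the argument goes through verbatim using the Zariski-open forms of \cref{theorem:bertini} and the first part of \cref{theorem:tangentspacematroid}.) Then $L_X(\omega) \subseteq X$ and, by the theorem, $\algmat{L_X(\omega)} = \algmat{X^*}^*$. Since the inclusion of irreducible affine varieties $L_X(\omega) \subseteq X$ directly implies $\algmat{L_X(\omega)} \preceq \algmat{X}$ (as recorded in the discussion preceding \cref{theorem:tangentspacematroid}, projections that are dominant for a subvariety are also dominant for the ambient variety), we conclude $\algmat{X^*}^* \preceq \algmat{X}$.

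For the other inequality $\algmat{X}^* \preceq \algmat{X^*}$, I apply the argument just given to $X^*$ in place of $X$. This is legitimate because $X^*$ is also an irreducible homogeneous affine variety. The result reads $\algmat{(X^*)^*}^* \preceq \algmat{X^*}$, and by reflexivity of duality $(X^*)^* = X$, so this simplifies to $\algmat{X}^* \preceq \algmat{X^*}$, as required.

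There is no real obstacle in this proof; the only subtlety is the mild upgrade of \cref{theorem:contactlocusmatroid} from the $\QQ$-generic setting to an arbitrary base field via the Zariski-open versions of the supporting theorems. Once this is noted, the corollary is a one-line reformulation of the theorem plus reflexivity.
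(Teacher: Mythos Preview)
Your proof is correct and follows essentially the same approach as the paper: both arguments rest on the inclusion of a linear space (for you, the contact locus $L_X(\omega) \subseteq X$; for the paper, the space $T_x(X)^\perp \subseteq X^*$) together with reflexivity to handle the other inequality. The only cosmetic differences are that the paper proves $\algmat{X}^* \preceq \algmat{X^*}$ first by inlining the argument of \cref{theorem:contactlocusmatroid} applied to $X$ rather than $X^*$ (thereby avoiding the need to invoke the theorem or its Zariski-open upgrade), while you prove the other inequality first by citing the theorem directly.
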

\begin{proof}
    We repeat the previous proof for $X$ instead of $X^*$. By \cref{theorem:tangentspacematroid}, there is some smooth point $x \in X$ such that $\algmat{X} = \algmat{T_x(X)}$. By \cref{lemma:linearalgebraicmatroid} we thus have $\algmat{X}^* = \algmat{{T_x(X)}^\perp}$. But from \cref{eq:dualdescription} we have ${T_x(X)}^\perp \subseteq X^*$, and hence $\algmat{{T_x(X)}^\perp} \preceq \algmat{X^*}$. The other statement follows from duality.
\end{proof}

We note that if $X$ is a linear space, then $X^* = X^\perp$, and hence by \cref{lemma:linearalgebraicmatroid}, we have $\algmat{X^*} = \algmat{X^\perp} = \algmat{X}^*$. That is, the algebraic dual matroid does coincide with the dual matroid in the case of linear spaces.

We finish this subsection with a general result about the separations of $\algmat{X^*}$. It is a standard result that for a matroid $\mathcal{M}$ on ground set $E$, a bipartition $(E_1,E_2)$ of $E$ is a separation of $\mathcal{M}$ if and only if it is a separation of the dual matroid $\mathcal{M}^*$. The following proposition is the analogous statement for the algebraic dual matroid. Its proof can be found in \cref{appendix:prelims}.

\begin{proposition}\label{lemma:dualconnectivity}\emph{(Separations of the algebraic dual matroid.)}\newline
    Let $X$ be an irreducible homogeneous affine variety in $\CC^E$, and let $E = E_1 \cup E_2$ be a bipartition of $E$. Then $(E_1,E_2)$ is a separation of $\algmat{X}$ if and only if $(E_1,E_2)$ is a separation of $\algmat{X^*}$. Moreover, if $(E_1,E_2)$ is a separation of $\algmat{X}$, then $X^* = {(X_{E_1})}^* \times {(X_{E_2})}^*$, where $X_{E_i} = \overline{\pi_{E_i}(X)}$ is the projection of $X$ onto $\CC^{E_i}$.
\end{proposition}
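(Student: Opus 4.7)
The plan is to reduce the proposition to the following geometric identity: if $X = X_{E_1} \times X_{E_2}$ as irreducible affine varieties, then $X^* = (X_{E_1})^* \times (X_{E_2})^*$. Once this is established, both directions of the bi-implication, as well as the ``moreover'' clause, follow quickly from \cref{lemma:algmatseparation} together with the reflexivity theorem $(X^*)^* = X$. Indeed, if $(E_1,E_2)$ is a separation of $\algmat{X}$, then by \cref{lemma:algmatseparation} we have $X = X_{E_1} \times X_{E_2}$; the identity then yields $X^* = (X_{E_1})^* \times (X_{E_2})^*$, and applying \cref{lemma:algmatseparation} to $X^*$ gives the separation of $\algmat{X^*}$. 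The converse implication is obtained by repeating the same argument with $X^*$ in place of $X$ and using that $(X^*)^* = X$ together with the fact that the factors of $X$ are then automatically $X_{E_1}$ and $X_{E_2}$.

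To prove the identity, I would first observe that a point $x = (x_1,x_2)$ of the product $X = X_{E_1} \times X_{E_2}$ is smooth if and only if each $x_i$ is a smooth point of $X_{E_i}$, and at such a point the tangent space decomposes as $T_x(X) = T_{x_1}(X_{E_1}) \oplus T_{x_2}(X_{E_2})$ under the identification $\CC^E = \CC^{E_1} \oplus \CC^{E_2}$. A direct check with the standard bilinear form $\langle \cdot,\cdot\rangle$ then yields $T_x(X)^\perp = T_{x_1}(X_{E_1})^\perp \times T_{x_2}(X_{E_2})^\perp$, where the perps on the right-hand side are taken in $\CC^{E_1}$ and $\CC^{E_2}$ respectively. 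Using the description of $X^*$ from \cref{eq:dualdescription} as the Zariski closure of the union of the spaces $T_x(X)^\perp$ over smooth $x$, one obtains
\[X^* = \overline{\bigcup_{x_1,x_2 \text{ smooth}} T_{x_1}(X_{E_1})^\perp \times T_{x_2}(X_{E_2})^\perp} = \overline{Y_1 \times Y_2},\]
where $Y_i = \bigcup\{T_{x_i}(X_{E_i})^\perp : x_i \in X_{E_i} \text{ smooth}\}$. Invoking the standard fact that Zariski closure commutes with Cartesian products of irreducible varieties (so $\overline{Y_1 \times Y_2} = \overline{Y_1} \times \overline{Y_2}$) and noting that $\overline{Y_i} = (X_{E_i})^*$ again by \cref{eq:dualdescription}, we conclude that $X^* = (X_{E_1})^* \times (X_{E_2})^*$.

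The only nontrivial ingredients are the smoothness and tangent space behavior of products, and the interchange of Zariski closure with Cartesian products; both are standard, the latter resting on the fact that the product of two irreducible affine varieties is an irreducible affine variety (so both sides have the same dimension, are irreducible, and one contains the other). I do not expect a serious obstacle here; the main bookkeeping is to keep careful track of the identification $\CC^E = \CC^{E_1} \oplus \CC^{E_2}$ in order to match the products of orthogonal complements with the tangent-hyperplane description of the dual variety.
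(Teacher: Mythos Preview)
Your proposal is correct and follows essentially the same route as the paper's proof: both reduce to the identity $X^* = (X_{E_1})^* \times (X_{E_2})^*$ for a product $X = X_{E_1} \times X_{E_2}$, establish it via the tangent-space decomposition of smooth points of a product together with \cref{eq:dualdescription} and the fact that Zariski closure commutes with Cartesian products, and then invoke \cref{lemma:algmatseparation} and reflexivity to handle both directions. The paper's argument is organized slightly differently (it phrases the converse as ``by symmetry'' rather than spelling out the reflexivity step), but the content is the same.
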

\noindent Note that by \cref{lemma:algmatseparation}, if $(E_1,E_2)$ is a separation of $\algmat{X^*}$, then $X^* = {(X^*)}_{E_1} \times {(X^*)}_{E_2}$, where the subscripts again mean that we take the Zariski closure of the projection. However, it is a priori not clear whether ${(X^*)}_{E_i} = {(X_{E_i})}^*$ holds, and indeed, this is not true for a general bipartition $(E_1,E_2)$.

\begin{corollary}\label{corollary:degeneratedual} (\cite[Section 2.2.6]{fischer.piontkowski_2001})
    Let $X$ be an irreducible homogeneous affine variety in $\CC^E$, and let $e \in E$. Then $e$ is a bridge in $\algmat{X}$ if and only if it is a loop in $\algmat{X^*}$.
\end{corollary}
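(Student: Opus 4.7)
The plan is to reduce both conditions to separations of the algebraic matroids $\algmat{X}$ and $\algmat{X^*}$ via \cref{lemma:algmatseparation}, and then pass between them using the biconditional in \cref{lemma:dualconnectivity}.

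For the forward direction, suppose $e$ is a bridge in $\algmat{X}$. By \cref{lemma:algmatseparation} this means $X = X_{E-e} \times \CC$, so $(\{e\}, E-e)$ is a separation of $\algmat{X}$. The ``moreover'' part of \cref{lemma:dualconnectivity} then gives $X^* = \CC^* \times (X_{E-e})^*$, where $\CC^*$ denotes the dual of the affine line $\CC$ viewed as a variety in $\CC^1$. But $\CC^* = \{0\}$: every point of $\CC$ is smooth with tangent space all of $\CC$, so $T_x(\CC)^\perp = \{0\}$. Consequently every point of $X^*$ has $e$-coordinate equal to $0$, so $(X^*)_e = \{0\} \neq \CC$, meaning $e$ is a loop in $\algmat{X^*}$.

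For the reverse direction, suppose $e$ is a loop in $\algmat{X^*}$, so that $(X^*)_e = \{0\}$. Since the rank of $\{e\}$ in $\algmat{X^*}$ is zero, $(\{e\}, E-e)$ is trivially a separation of $\algmat{X^*}$, and the ``if'' direction of \cref{lemma:dualconnectivity} (applied to $X$, or equivalently to $X^*$ using $(X^*)^* = X$) shows that this is also a separation of $\algmat{X}$. By \cref{lemma:algmatseparation} we get $X = X_e \times X_{E-e}$. Since $X$ is irreducible and homogeneous, $X_e \subseteq \CC$ must be either $\{0\}$ or $\CC$, so it only remains to rule out the degenerate possibility $X_e = \{0\}$.

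To do that I would argue by contradiction: if $X \subseteq \{x_e = 0\}$ (and $X \neq \{0\}$, which is safe because $X = \{0\}$ would give $X^* = \CC^E$ and $e$ would not be a loop in $\algmat{X^*}$), then at every smooth $x \in X$ the tangent space $T_x(X)$ lies in the hyperplane $\{x_e = 0\}$, so the standard basis vector $e_e$ is orthogonal to $T_x(X)$. By the description of $X^*$ in \cref{eq:dualdescription}, this gives $e_e \in X^*$, and yet $(e_e)_e = 1$, contradicting $(X^*)_e = \{0\}$. Hence $X_e = \CC$, so $X = X_{E-e} \times \CC$, and \cref{lemma:algmatseparation} shows that $e$ is a bridge in $\algmat{X}$. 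The only mildly subtle point is exactly this asymmetric step of excluding $X_e = \{0\}$; once that is handled, everything is a direct application of the preceding lemmas.
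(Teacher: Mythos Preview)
Your proof is correct and follows essentially the same route as the paper: both directions go through \cref{lemma:algmatseparation} and \cref{lemma:dualconnectivity}, together with the computation $\CC^* = \{0\}$. The only difference is in how the reverse direction is finished: the paper (implicitly) applies the ``moreover'' part of \cref{lemma:dualconnectivity} to $X^*$ and reflexivity to obtain $X_e = (\{0\})^* = \CC$ directly, whereas you instead pass to the separation of $\algmat{X}$ and then rule out $X_e = \{0\}$ by a tangent-space argument via \cref{eq:dualdescription}. Both are valid ways to handle the loop/bridge asymmetry; your version is a bit more explicit but amounts to the same thing.
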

\begin{proof}
    By \cref{lemma:algmatseparation}, we have $X = X_{E-e} \times \CC$. Since the dual of $\CC$ (as a subvariety of $\CC$) is the single point $\{0\}$, \cref{lemma:dualconnectivity} implies that $X^* = X^*_{E-e} \times \{0\}$,  so $e$ is a loop in $\algmat{X^*}$. Conversely, if $e$ is a loop in $\algmat{X^*}$, then the projection of $X^*$ onto the coordinate axis of $e$ is a zero-dimensional homogeneous variety, so it must be the singleton $\{0\}$. Hence we have $X^* = X^*_{E-e} \times \{0\}$, and it follows from \cref{lemma:algmatseparation,lemma:dualconnectivity} again that $e$ is a bridge in $\algmat{X}$.
\end{proof}

\subsection{The generic stress matroid}\label{subsection:stressmatroid}

Let $G$ be a graph and let $(G,p)$ be a framework in $\CC^d$. Recall that $S(G,p)$ denotes the linear space of stresses of $(G,p)$, and that for a stress $\omega \in S(G,p)$, we use  $\stresskernel{d}{G}(\omega) \subseteq (\CC^{d})^V$ to denote the stress kernel of $\omega$, which is the linear space of configurations $q \in (\CC^d)^V$ satisfying $\omega \in S(G,q)$. %

We now specialize the discussion of the previous subsection to the measurement variety $M_{d,G}$.
Recall that for a generic realization $(G,p)$ with $x = m_{d,G}(p)$, we have $S(G,p) = {T_x(M_{d,G})}^\perp$ by \cref{lemma:stresstangentspace}. Also, it follows from \cref{lemma:genericimage} that the $M_{d,G}$-generic points are precisely the images of generic configurations under $m_{d,G}$. Thus by specializing \cref{eq:genericclosure}, we can write the dual variety $M_{d,G}^*$ as the Zariski closure of
\begin{equation}\label{eq:dual}
    \bigcup \left\{S(G,p) : (G,p) \text{ is a generic framework in } \CC^d \right\}.
\end{equation}
Based on \cref{eq:dual}, we shall call $M_{d,G}^*$ the \emph{($d$-dimensional generic) stress variety} of $G$, and $\algmat{M_{d,G}^*}$ the \emph{($d$-dimensional generic) stress matroid} of $G$. Using the terminology of the previous subsection, the $d$-dimensional generic stress matroid, which we will denote by $\stressmatroid{d}{G}$, is the algebraic dual matroid of $\mathcal{R}_d(G) = \algmat{M_{d,G}}$.

Let us say that a vector $\omega \in \CC^E$ is a \emph{generic $d$-stress of $G$} if $\omega$ is $M_{d,G}^*$-generic. By specializing \cref{lemma:dualoverQ} to the measurement variety, we obtain the following basic lemma, which we shall use without reference in the rest of the paper.
\begin{lemma}
    Let $\omega \in \CC^E$ be a generic $d$-stress of the graph $G$. Then $\omega \in S(G,p)$ for some generic framework $(G,p)$ in $\CC^d$. Conversely, every generic framework $(G,p)$ in $\CC^d$ has a generic $d$-stress $\omega \in S(G,p)$.
\end{lemma}
As a weaker notion, we say that the vector $\omega \in \CC^E$ is an \emph{\quasigen\ $d$-stress of $G$} if $\omega \in S(G,p)$ for some generic framework $(G,p)$ in $\CC^d$. 
The description of the stress variety given by \cref{eq:dual} shows that every \quasigen\ $d$-stress is contained in $M_{d,G}^*$. Note that if $G$ is $\mathcal{R}_d$-independent, then $M_{d,G}^* = \{0\}$, and in this case the only \quasigen\ $d$-stress is the zero vector. 

Let $\omega \in \CC^E$ be a vector. The \emph{contact locus} of $\omega$ is the set
\[\contactlocus{d}{G}(\omega) = \overline{\{x \in M_{d,G}: x \text{ is smooth and } T_x(M_{d,G}) \subseteq \omega^\perp\}}.\] By the general results of \cref{subsection:algebraicdual}, whenever $\omega$ is a generic $d$-stress, $\contactlocus{d}{G}(\omega)$ is a linear space, and thus it is irreducible. The following folklore proposition describes how the stress kernel and the contact locus of $\omega$ are related. For completeness, a proof is given in \cref{appendix:prelims}.
\begin{lemma}\label{lemma:contactlocusstresskernel}
    If $\omega$ is a generic $d$-stress of the graph $G$, then $\contactlocus{d}{G}(\omega) = \overline{m_{d,G}(\stresskernel{d}{G}(\omega))}$.
\end{lemma}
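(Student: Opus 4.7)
The plan is to prove both inclusions of $\contactlocus{d}{G}(\omega) = \overline{m_{d,G}(\stresskernel{d}{G}(\omega))}$ separately, using \cref{theorem:bertini} as the main tool. The key observation is that whenever $R(G,p) = dm_{d,G}|_p$ is surjective onto $T_{m_{d,G}(p)}(M_{d,G})$, one has $S(G,p) = T_{m_{d,G}(p)}(M_{d,G})^\perp$ (and moreover $m_{d,G}(p)$ is smooth, by comparing the rank of $R(G,p)$ to the dimension of $M_{d,G}$), so under this surjectivity the algebraic condition $p \in \stresskernel{d}{G}(\omega)$ is equivalent to the geometric condition $m_{d,G}(p) \in \contactlocus{d}{G}(\omega)$.

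For $\overline{m_{d,G}(\stresskernel{d}{G}(\omega))} \subseteq \contactlocus{d}{G}(\omega)$, I would apply the first part of \cref{theorem:bertini} to $m_{d,G} : \CC^{nd} \to M_{d,G}$ to obtain a nonempty Zariski open $V' \subseteq \CC^{nd}$ on which $R(G,p)$ is surjective onto the tangent space. Using \cref{lemma:dualoverQ}, pick a $C_{M_{d,G}}$-generic pair $(x_0, \omega)$, and then by \cref{lemma:genericimage}(b) lift $x_0$ to a $\CC^{nd}$-generic configuration $p_0$ with $m_{d,G}(p_0) = x_0$. Since $p_0$ is $\CC^{nd}$-generic it lies in $V'$, and $\omega \in T_{x_0}(M_{d,G})^\perp = S(G,p_0)$ gives $p_0 \in \stresskernel{d}{G}(\omega)$. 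The irreducibility of the linear space $\stresskernel{d}{G}(\omega)$ then makes $V' \cap \stresskernel{d}{G}(\omega)$ dense in $\stresskernel{d}{G}(\omega)$, and $m_{d,G}$ sends this dense subset into $\contactlocus{d}{G}(\omega)$ by the key observation; taking closures gives the inclusion.

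For the reverse inclusion, the second part of \cref{theorem:bertini} (valid since $\CC^{nd}$ is smooth) yields a nonempty Zariski open $U \subseteq M_{d,G}$ such that $R(G,p)$ is surjective onto $T_x(M_{d,G})$ for every $x \in U$ and every $p \in m_{d,G}^{-1}(x)$. For $x \in U \cap \contactlocus{d}{G}(\omega)$, every preimage $p$ satisfies $\omega \in T_x(M_{d,G})^\perp = S(G,p)$, hence $p \in \stresskernel{d}{G}(\omega)$ and $x \in m_{d,G}(\stresskernel{d}{G}(\omega))$. The $M_{d,G}$-generic point $x_0$ from the previous paragraph lies in $U$, so $U \cap \contactlocus{d}{G}(\omega)$ is nonempty; since $\contactlocus{d}{G}(\omega)$ is a linear space (and hence irreducible) by the linearity theorem, this intersection is dense in $\contactlocus{d}{G}(\omega)$, and closure completes the proof.

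The central subtlety, and what makes both halves line up, is exhibiting a single $C_{M_{d,G}}$-generic witness $(p_0, x_0)$: this one pair, produced from the genericity of $\omega$ via \cref{lemma:dualoverQ}, simultaneously certifies that the Bertini-type open sets in $\CC^{nd}$ and in $M_{d,G}$ meet $\stresskernel{d}{G}(\omega)$ and $\contactlocus{d}{G}(\omega)$, respectively. Without this lift, one can only show pointwise that a stress gives rise to an element of the cokernel of the Jacobian, which a priori is strictly larger than the tangent-hyperplane condition; the dense-open arguments above are what bridge the two.
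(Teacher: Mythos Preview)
Your argument is correct and follows essentially the same route as the paper: both proofs produce a single generic witness $(p_0,x_0)$ with $\omega\in S(G,p_0)$ and then use density (you via the Bertini open sets $V'$ and $U$, the paper via \cref{lemma:genericdense} on $\QQ$-generic points) to extend from this witness to the full equality. One small point to tighten in your reverse inclusion: you need each $x\in U\cap\contactlocus{d}{G}(\omega)$ to actually have a preimage under $m_{d,G}$ before concluding $x\in m_{d,G}(\stresskernel{d}{G}(\omega))$; this is fixed by further intersecting $U$ with a dense open subset of $M_{d,G}$ contained in the (constructible) image $m_{d,G}(\CC^{nd})$, which still contains the generic point $x_0$.
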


For convenience, we state the specialization of \cref{theorem:contactlocusmatroid} to the measurement variety as a separate theorem.
\begin{theorem}\label{theorem:rigiditycontactlocusmatroid}
    Let $\omega$ be a generic $d$-stress of the graph $G$. Then $\algmat{\contactlocus{d}{G}(\omega)} = \stressmatroid{d}{G}^*$.
\end{theorem}

After unpacking the definitions, \cref{theorem:rigiditycontactlocusmatroid} says the following for a set of edges $E_0 \subseteq E$. There exists a \quasigen\ $d$-stress $\omega'$ with $\omega'_e = \sigma_e, e \in E_0$ for almost all $\sigma \in \CC^{E_0}$ if and only if for a generic $d$-stress $\omega$, every point $x \in \contactlocus{d}{G}(\omega)$ in the contact locus is uniquely determined by the values ${(x_e)}_{e \in E - E_0} \in \CC^{E - E_0}$. Conversely, given a generic $d$-stress $\omega$, there exists a point $x \in \contactlocus{d}{G}(\omega)$ with $x_e = y_e$ for all\footnote{Here we do not need to write ``almost all'', since $\contactlocus{d}{G}(\omega)$ is a linear space, and hence its projection is dominant if and only if it is surjective.} $y \in \CC^{E - E_0}$ if and only if for almost all $\sigma \in \CC^{E_0}$, there are at most finitely many \quasigen\ $d$-stresses $\omega'$ with $\omega'_e = \sigma_e, e \in E_0$. These somewhat mysterious correspondences will be useful in clarifying the structures of both the stress matroid $\stressmatroid{d}{G}$ and the contact locus $\contactlocus{d}{G}(\omega)$.

\begin{figure}[tb]
    \centering
    \begin{subfigure}[b]{0.49\linewidth}
    \centering
\begin{tikzcd} %
{(\CC^d)^V} \arrow[r, "m_{d,G}"] & {M_{d,G}} \arrow[r, leftrightarrow, "\text{dual}"]                & {M_{d,G}^*}                              \\
{\stresskernel{d}{G}(\omega)} \arrow[u, "\subseteq", sloped, phantom] \arrow[r, "m_{d,G}"] & {\contactlocus{d}{G}(\omega)} \arrow[u, "\subseteq", sloped, phantom] & {\phantom{\omega \in} S(G,p) \ni \omega} \arrow[u, "\subseteq", sloped, phantom] \\
 {\affine{G}{p}} \arrow[r, "m_{d,G}"] \arrow[u, "\subseteq", sloped, phantom] & {\affineedge{G}{p}} \arrow[u, "\subseteq", sloped, phantom]          &                                         
\end{tikzcd}
\caption{}
\end{subfigure}
\begin{subfigure}[b]{0.49\linewidth}
\centering
\begin{tikzcd} %
\mathcal{R}_d(G) \arrow[r, leftrightarrow, "\text{alg.\ dual}"] \arrow[rd,leftrightarrow] & \stressmatroid{d}{G}                              \\
\stressmatroid{d}{G}^* \arrow[u, "\leq", sloped, phantom] \arrow[ru, leftrightarrow]       & \mathcal{R}_d(G)^* \arrow[u, sloped, "\leq", phantom] \\
\affinematroid{d}{G} \arrow[u, sloped, "\leq", phantom]               &                                              
\end{tikzcd}
\caption{}
\end{subfigure}
    \caption{Relationships between \textit{(a)} some of the spaces considered in this paper, and \textit{(b)} their algebraic matroids. Here $(G,p)$ is a generic realization in $\CC^d$ and $\omega$ is a generic $d$-stress of $G$ with $\omega \in S(G,p)$. In \textit{(b)}, $\mathcal{R}_d(G)$ is the $d$-dimensional generic rigidity matroid of $G$, $\stressmatroid{d}{G}$ is the $d$-dimensional generic stress matroid of $G$, and $\affinematroid{d}{G}$ is the $d$-dimensional generic affine edge measurement matroid of $G$ (introduced in \cref{subsection:globallyrigidstressmatroid}).}
    \label{fig:diag2}
\end{figure}

We close this subsection by considering an example: the complete bipartite graph $K_{5,5}$.

\begin{example}\label{example:K55} \emph{(The $3$-dimensional generic stress matroid of $K_{5,5}$.)} \newline
    Let $U = \{u_1,\ldots,u_5\}$ and $V = \{v_1,\ldots,v_5\}$ denote the bipartition of $V(K_{5,5})$, and let $(K_{5,5},p)$ be a realization of $K_{5,5}$ in $\CC^3$. The sets $p(U)$ and $p(V)$ are affinely dependent, so there exist scalars $a_1,\ldots,a_5 \in \CC$ and $b_1,\ldots,b_5 \in \CC$ with \[\sum_{i=1}^5 a_i = \sum_{i=1}^5 a_ip(u_i) = \sum_{i=1}^5 b_i = \sum_{i=1}^5 b_ip(v_i) = 0.\]
    
    Now the vector $\omega \in \CC^{E(K_{5,5})}$ defined by $\omega_{u_iv_j} = a_ib_j, i,j \in \{1,\ldots,5\}$ is a stress of $(G,p)$.
    Moreover, Bolker and Roth~\cite{bolker.roth_1980} showed that if $p$ is generic, then every stress of $(K_{5,5},p)$ is of this form (see also~\cite[Section 3.4]{gortler.etal_2010}).

    Let us now identify $\CC^{E(K_{5,5})}$ with $\CC^{5 \times 5}$. The previous discussion shows that every \quasigen\ $3$-stress of $K_{5,5}$ is of the form $ab^T \in \CC^{5 \times 5}$ for some $a,b \in \CC^5$ whose coordinates sum to zero. Conversely, it is not difficult to show that almost all such matrices arise as a \quasigen\ $3$-stress of $K_{5,5}$. It follows from \cref{eq:dual} that \begin{align*}
        M_{3,K_{5,5}}^* &= \{ab^T : a,b \in \CC^5, \sum_{i=1}^5 a_i = \sum_{i=1}^5 b_i = 0\} \nonumber \\ & = \{M \in \CC^{5 \times 5}: \rk(M) = 1, M \text{ has zero row and column sums}\}
    \end{align*}
    From this, we can deduce the following:
    \begin{itemize}
        \item The rank of $\mathcal{S}_3(K_{5,5})$ (which is equal to the dimension of $M_{3,K_{5,5}}^*$) is $7$.
        \item Every independent set of $\mathcal{S}_3(K_{5,5})$ induces a forest; in other words,  $\mathcal{S}_3(K_{5,5}) \leq \mathcal{R}_1(K_{5,5})$. While this can be verified directly, it also follows from the fact that $M_{3,K_{5,5}}^*$ is contained in the rank one determinantal variety, whose associated matroid is known to be the graphic matroid of $K_{5,5}$.
        \item Every vertex star is dependent in $\mathcal{S}_3(K_{5,5})$. This corresponds to the fact that every \quasigen\ $3$-stress of $K_{5,5}$ sums to zero on each vertex star.
    \end{itemize}
    It would be interesting to give a direct combinatorial description of $\mathcal{S}_3(K_{5,5})$.
\end{example}

\subsection{Spanning sets and independent sets in the stress matroid}\label{subsection:spanningindependent}

In this subsection we prove a pair of results about the combinatorial structure of the generic stress matroid. These results will play an important role in our investigation of $d$-stress-linked vertex pairs in the next section. The first says that sufficiently large edge sets are always spanning in this matroid, where ``large'' means that the complement of the edge set spans few vertices.  %
\begin{theorem}\label{corollary:stressmatroidspanning} \emph{(Large edge sets are spanning in the stress matroid.)}\newline
    Let $G = (V,E)$ be a graph 
    and let $E_0 \subseteq E$ be a set of edges. If $E - E_0$ spans at most $d+1$ vertices in $G$, then $E_0$ is spanning in $\stressmatroid{d}{G}$.
\end{theorem}

Recall that spanning sets of $\stressmatroid{d}{G}$ correspond to coordinate projections of $\mathcal{S}_d(G)$ with generically finite fibers. Hence
\cref{corollary:stressmatroidspanning} asserts that for a generic $d$-stress $\omega$, the fiber $\pi^{-1}_{E_0}(\omega)$ is finite whenever $E - E_0$ spans at most $d+1$ vertices. This will easily follow from the following lemma. Although its statement is quite natural and the proof is elementary, it appears to be a new result.

\begin{lemma}\label{lemma:stressalmosteveryvertex}
     Let $G = (V,E)$ be a graph, and let $(G,p)$ and $(G,q)$ be realizations in $\CC^d$ such that $(G,p)$ is in general position. Fix $\omega \in S(G,p)$, and let $U \subseteq V$ be the set of vertices where the equilibrium conditions \cref{eq:equilibrium} are not satisfied for $\omega$ and $(G,q)$. If $U\neq \varnothing$, then  $|U| \geq d+2$.
\end{lemma}
\begin{proof}
    We show that if $|U| \leq d+1$, then $U = \varnothing$. Let $\Omega \in \CC^{V \times V}$ be the stress matrix associated to $\omega$, and let $\Omega_U$ denote the submatrix of $\Omega$ formed by deleting the rows corresponding to $U$. Also, let $Q \in \CC^{V \times d}$ be the representation of $q$ as a matrix. Since $(G,q)$ satisfies the equilibrium conditions associated to $\omega$ at the vertices in $U$, we have $\Omega_U Q = 0$. Thus to conclude that $\Omega Q = 0$, it suffices to show that $\Omega_U$ spans the row space of $\Omega$.%
    
    To show this, we use the fact that $\omega$ is the stress of a framework in general position.
    Let $P \in \CC^{V \times d}$ be the representation of $p$ as a matrix, and let $\widetilde{P} \in \CC^{V \times (d+1)}$ be the matrix obtained from $P$ by appending an all-ones column. Note that the rows of $\widetilde{P}$ corresponding to $U$ are linearly independent, since the set $\{p(u), u \in U\}$ is affinely independent. In particular, $\rk(\widetilde{P}) = d+1$.

    Now let us extend $\widetilde{P}$ to a new matrix $P'$ by adding new columns until the column space of $P'$ spans the kernel of $\Omega$. It follows from \cref{lemma:linearalgebraicmatroid} that the row matroid of $P'$ is dual to the column matroid of $\Omega$; since $\Omega$ is symmetric, the latter is equal to the row matroid of $\Omega$. The rows of $P'$ corresponding to $U$ are linearly independent, so by duality, the rows of $\Omega$ corresponding to $V - U$ span the row space of $\Omega$, as claimed. %
\end{proof}

\begin{proof}[Proof of \cref{corollary:stressmatroidspanning}]
    Let us fix a generic $d$-stress $\omega \in M^*_{d,G}$. As noted above, what we need to show is that for the projection $\pi_{E_0} : M^*_{d,G} \to \CC^{E_0}$, the fiber $\pi^{-1}_{E_0}(\omega)$ is finite. Suppose, for a contradiction, that this is not the case; then the fiber contains another generic $d$-stress $\omega' \in M^*_{d,G}$. Let us fix generic frameworks $(G,p)$ and $(G,q)$ in $\CC^d$ with $\omega \in S(G,p)$ and $\omega' \in S(G,q)$.
    
    Note that $\omega$ and $\omega'$ can only differ on edges in $E - E_0$. This means that the framework $(G,q)$ satisfies the equilibrium conditions \cref{eq:equilibrium} associated to $\omega$ on all but at most $d+1$ vertices. Using \cref{lemma:stressalmosteveryvertex}, we deduce that $\omega \in S(G,q)$, and hence $\omega - \omega'$ is a nonzero stress of $(G,q)$ supported on $E - E_0$. This is a contradiction, since any generic framework on at most $d+1$ vertices is stress-free. 
\end{proof}

The second main result of this subsection is about the independent sets of the generic stress matroid.

\begin{theorem}\label{theorem:projectiveimages} \emph{(Small forests are independent in the stress matroid.)}\newline
    Let $G = (V,E)$ be a graph, and let $E_0 \subseteq E$ the edge set of a forest $F$ in $G$ on at most $d+1+c$ vertices, where $c$ denotes the number of components of $F$.
    If none of the edges in $E_0$ are $\mathcal{R}_d$-bridges in $G$, then $E_0$ is independent in $\stressmatroid{d}{G}$.
\end{theorem}

Before giving the proof of \cref{theorem:projectiveimages}, we recall the definition of a projective transformation and introduce some related terminology. Let $(G,p)$ be a realization of the graph $G$ in $\CC^d$, and let $\tilde{p} \in (\CC^{d+1})^V$ denote the configuration obtained by augmenting each vector $p(v), v \in V(G)$ with the number $1$ in the last coordinate. Let us say that a nonsingular matrix $A \in \CC^{(d+1) \times (d+1)}$ is \emph{feasible} (for $(G,p)$) if $\lambda_v = {(A\tilde{p}(v))}_{d+1} \neq 0$ for every $v \in V(G)$. Given a feasible matrix $A$, let us consider the configuration $q \in (\CC^d)^V$ obtained by dividing each coordinate of $A\tilde{p}(v)$ by $\lambda_v$ and then deleting the last coordinate, for each $v \in V(G)$. We call the configuration $q$ obtained in this way the \emph{projective image of $p$ under $A$}. The key fact, first observed in~\cite{roth.whiteley_1981}, is that if $\omega \in \CC^E$ is a stress of $(G,p)$, then the vector $\omega' \in \CC^E$ defined by $\omega'_{uv} = \lambda_u \lambda_v \omega_{uv}$ 
is a stress of $(G,q)$. For convenience, let us say that the pair $(q,\omega')$ is the \emph{projective image of the pair $(p,\omega)$ under $A$}. Let $P(G,p,\omega)$ denote the set of projective images of $(p,\omega)$. We shall need the following technical lemma, whose proof we defer to \cref{appendix:prelims}.

\begin{lemma}\label{lemma:projectivetechnical}
    Let $G = (V,E)$ be a graph, let $(G,p)$ be a generic realization in $\CC^d$, and let $\omega$ be a generic $d$-stress with $\omega \in S(G,p)$. Consider the set $P(G,p,\omega) \subseteq (\CC^{d})^V \times \CC^E $ defined as above. %
    We have $\pi_2(P(G,p,\omega)) \subseteq M_{d,G}^*$, where $\pi_2 : (\CC^{d})^V \times \CC^E \rightarrow \CC^{E}$ denotes the projection onto the second factor.
\end{lemma}

\begin{proof}[Proof of \cref{theorem:projectiveimages}]
    Let $\omega$ be a generic $d$-stress of $G$, and let $(G,p)$ be a generic realization in $\CC^d$ with $\omega \in S(G,p)$. Note that $\omega_{uv} \neq 0$ for every $uv \in E_0$. Indeed, since $\omega$ is $M_{d,G}^*$-generic, if it satisfied the equation $x_{uv} = 0$, then every element of $M_{d,G}^*$ would satisfy it. But since $uv$ is not an $\mathcal{R}_d$-bridge of $G$, we know that there exist \quasigen\ $d$-stresses of $G$ that are nonzero on $uv$. 
    
    Let $P(G,p,\omega)$ be defined as above. By \cref{lemma:projectivetechnical}, the projection of $P(G,p,\omega)$ onto $\CC^E$ is contained in $M_{d,G}^*$. Hence, to show that $E_0$ is independent in $\stressmatroid{d}{G} = \algmat{M_{d,G}^*}$, it is enough to show that the projection of $P(G,p,\omega)$ onto $\CC^{E_0}$ is full-dimensional. In other words, it suffices to show that for a nonempty Zariski open set of vectors $\sigma = {(\sigma_e)}_{e \in E_0} \in \CC^{E_0}$, there is a projective image $(q,\omega')$ of $(p,\omega)$ such that $\omega'_e = \sigma_e$ for all $e \in E_0$. For now, let us fix a vector $\sigma \in \CC^{E_0}$ that is everywhere nonzero.
    
    Let us assume for convenience that $c=1$ (i.e., $E_0$ induces a tree) and $|V(E_0)| = d+2$. The proof of the general case is very similar. Let $u_0$ be a leaf vertex in $E_0$, let $v_0$ be its unique neighbor in $E_0$, and set $V_0 = |V(E_0)| - u_0$. Note that $|V_0| = d+1$. 
    
    Recall the notation introduced in the paragraph before \cref{lemma:projectivetechnical}. By genericity, the points $\{p(v), v \in V_0\}$ are affinely independent, and hence the ``lifted'' points $\{\tilde{p}(v), v\in V_0\}$ are linearly independent. This means that we can freely map these points to any collection of $d+1$ points in $\CC^{d+1}$ by using a linear transformation, and in particular, we can freely choose the values $\lambda_v, v \in V_0$ (provided that they are nonzero). Moreover, we can write $\tilde{p}(u_0) = \sum_{v \in V_0}\alpha_v \tilde{p}(v)$ for some nonzero scalars $\alpha_v \in \CC, v \in V_0$. (The fact that $\alpha_v \neq 0$ follows from genericity again.) 
    
    Our goal is to solve the system of equations
    \begin{equation}\label{eq:projective}
        \lambda_u \lambda_v \omega_{uv} = \sigma_{uv}, \hspace{2em} \forall uv \in E_0,
    \end{equation}
    where, by the preceding discussion, $\lambda_v, v \in V_0$ are variables whose value we can freely choose, while by linearity we have $\lambda_{u_0} = \sum_{v \in V_0} \alpha_v \lambda_v$.

    Let us write $\lambda_{v_0} = c$, where $c \in \CC$ is a nonzero scalar whose value we will fix later. For each vertex $v \in V_0 - v_0$, there is a unique path $v_0, v_1, \ldots, v_k = v$ in $F$. We now define $\lambda_v$ by
    \[\lambda_v = 
    \begin{cases}
    \frac{\sigma_{v_0v_1}}{\omega_{v_0v_1}} \cdot \frac{\omega_{v_1v_2}}{\sigma_{v_1v_2}} \cdot \frac{\sigma_{v_2v_3}}{\omega_{v_2v_3}} \cdots \frac{\sigma_{v_{k-1}v_k}}{\omega_{v_{k-1}v_k}} \cdot c  &\text{if } k \text{ is odd,}  \\ \frac{\omega_{v_0v_1}}{\sigma_{v_0v_1}} \cdot \frac{\sigma_{v_1v_2}}{\omega_{v_1v_2}} \cdot \frac{\omega_{v_2v_3}}{\sigma_{v_2v_3}} \cdots \frac{\sigma_{v_{k-1}v_k}}{\omega_{v_{k-1}v_k}} \cdot c^{-1} &\text{if } k \text{ is even.}
    \end{cases}\]

    It is easy to verify that with this definition, \cref{eq:projective} is satisfied for every $uv \in E_0 - u_0v_0$. Moreover, we have \[\lambda_{u_0} = \sum_{v \in V_0}\alpha_v \lambda_v = A \cdot c + B \cdot c^{-1}\] for some scalars $A,B \in \CC$, and hence \[\lambda_{u_0}\lambda_{v_0}\omega_{uv} = (Ac^2 + B) \omega_{uv}.\] If $A \neq 0$, then the right-hand side is surjective as a function of $c$, and hence with a suitable choice of $c$ it can be made to equal $\sigma_{u_0v_0}$. 
    
    Thus it only remains to argue that $A \neq 0$ for a nonempty Zariski open set of vectors $\sigma \in \CC^{E_0}$. After clearing denominators, the condition $A = 0$ leads to a polynomial equation $\Phi(\omega,\sigma) = 0$ involving $\omega$ and $\sigma$. Thus $A \neq 0$ holds on a Zariski open set, and we only need to argue that this set is nonempty, or equivalently, that the polynomial $\Phi$ is not identically zero. This can be seen by looking at a vertex $v \in V_0 - v_0$ whose distance from $v_0$ is odd and maximal among such vertices. Let $e$ be the first edge on the path from $v$ to $v_0$ in $F$. If we look at the expression of $\Phi(\omega,\sigma)$ obtained by clearing denominators in $A$, $\sigma_e$ only appears in a single term. Hence the corresponding variable has positive degree in $\Phi$, and thus $\Phi \neq 0$.
\end{proof}

\cref{theorem:projectiveimages} is best possible in the sense that the edge set of a tree on $d+3$ vertices need not be independent in the stress matroid. For example, recall from \cref{example:K55} that the vertex stars are dependent in $\mathcal{S}_3(K_{5,5})$.  On the other hand, the proof technique of \cref{theorem:projectiveimages} seems quite weak in that it only uses projective images of $(p,\omega)$, and intuitively these form a small-dimensional subset of all pairs $(q,\omega')$ with $\omega' \in S(G,q)$. The problem is that even though we expect there to be many \quasigen\ stresses $\omega'$, it is not clear how to control the values of $\omega'$ on a given set of edges. 

In fact, we will only need the following corollary of \cref{theorem:projectiveimages}.

\begin{corollary}\label{theorem:stressindependence}
    For any graph $G$, the stress matroid $\stressmatroid{d}{G}$ does not contain circuits of size two.
\end{corollary}
\begin{proof}
    Consider a pair $\{e_1,e_2\} \subseteq E(G)$. 
    If $e_i$ is an $\mathcal{R}_d$-bridge in $G$ for some $i \in \{1,2\}$, then by \cref{corollary:degeneratedual}, $e_i$ is a loop in $\stressmatroid{d}{G}$, and hence $\{e_1,e_2\}$ is not a circuit. Otherwise, \cref{theorem:projectiveimages} implies that $\{e_1,e_2\}$ is independent in $\stressmatroid{d}{G}$ and hence, again, it is not a circuit.
\end{proof}

\subsection{The stress matroid of globally rigid graphs}\label{subsection:globallyrigidstressmatroid}

To close this section, we focus on the case of globally $d$-rigid graphs. For such a graph $G = (V,E)$, we can use \cref{theorem:ght} to obtain a concrete description of the contact locus $\contactlocus{d}{G}(\omega)$ for a generic $d$-stress $\omega \in \CC^E$, and use this description and \cref{theorem:contactlocusmatroid} to study the generic stress matroid $\stressmatroid{d}{G}$.

Fix a generic framework $(G,p)$ in $\CC^d$, and let $\omega \in \CC^E$ be a generic $d$-stress with $\omega \in S(G,p)$. \cref{theorem:ght}(b) implies that $\stresskernel{d}{G}(\omega) = \affine{G}{p}$; indeed, this is equivalent to the algebraic condition that the stress matrix associated to $\omega$ has rank $|V|-d+1$, so if it holds for some stress $\omega' \in S(G,p)$, then it holds for every generic $d$-stress. Now we can use \cref{lemma:contactlocusstresskernel} to deduce that
\begin{equation}\label{eq:globallyrigidcontactlocus}
    \contactlocus{d}{G}(\omega) = \overline{m_{d,G}(\stresskernel{d}{G}(\omega))} = \overline{m_{d,G}(\affine{G}{p})}.
\end{equation}

This suggests investigating the space $\affineedge{G}{p} = m_{d,G}(\affine{G}{p})$  of edge measurements of affine images of $(G,p)$. It turns out that $\affineedge{G}{p}$ is also a linear space, which we can describe concretely. Let us define the edge vectors $e_{uv} = p(u) - p(v)$ for each $uv \in E$. (Note that $e_{uv}$ is only defined up to sign; to be more precise, we could take an arbitrary orientation of $G$ and define $e_{uv}$ using this orientation.) Let $\sym{d}$ denote the space of symmetric $d \times d$ matrices over $\CC$.
\begin{lemma}\label{lemma:affineedgedim}\cite[Lemma 4.6]{gortler.etal_2019} \emph{(Linearity of affine edge measurements.)}\newline
    Let $(G,p)$ and $\{e_{uv}, uv \in E\}$ be as in the preceding discussion. Then
    \[\affineedge{G}{p} = \{{(e_{uv}^TQe_{uv})}_{uv \in E} : Q \in \sym{d}\} \subseteq \CC^E.\]In particular, $\affineedge{G}{p}$ is a linear space of dimension at most $\dim(\sym{d}) = \binom{d+1}{2}$.
\end{lemma}
We say that \emph{the edge directions of $(G,p)$ lie on a conic at infinity} if there is a nonzero symmetric matrix $Q \in \sym{d}$ such that $e_{uv}^TQe_{uv} = 0$ for every $uv \in E$. This is the same as saying that the linear mapping $Q \mapsto {(e_{uv}^T Q e_{uv})}_{uv \in E}$ is singular, which by \cref{lemma:affineedgedim} is equivalent to the condition that $\dim(\affineedge{G}{p}) < \binom{d+1}{2}$. The following is a classic result of Connelly. %
\begin{proposition}\label{theorem:conicatinfinity} (\cite[Proposition 4.3]{connelly_2005})
    Let $G$ be a graph and let $(G,p)$ be a generic realization in $\CC^d$. If $G$ has a subgraph $G_0$ in which every vertex has degree at least $d$, then the edge directions of $(G,p)$ do not lie on a conic at infinity.
\end{proposition}

\begin{proposition}\label{lemma:affinematroid}
    Let $G$ be a graph. The matroid $\algmat{\affineedge{G}{p}}$ is the same for every generic realization $(G,p)$ in $\CC^d$. Its rank is at most $\binom{d+1}{2}$, and is equal to $\binom{d+1}{2}$ if $G$ is not $\mathcal{R}_d$-independent.
\end{proposition}
\begin{proof}
    Recall that by \cref{lemma:linearalgebraicmatroid}, the algebraic matroid of a linear space is the same as the row matroid of any matrix whose columns span the space. By fixing a basis of $\sym{d}$ consisting of integer-valued matrices and considering the linear transformation $\sym{d} \rightarrow \affineedge{G}{p}$ described in \cref{lemma:affineedgedim} in this basis, we obtain a matrix $M(G,p)$ whose column space is $\affineedge{G}{p}$. Since the minors of $M(G,p)$ are polynomials with rational coefficients in the coordinates of $\{p(v), v \in V(G)\}$, the rank of each submatrix of $M(G,p)$ is the same for every generic choice of $p$, and thus so is the row matroid of $M(G,p)$.   
    
    Recall that the rank of $\algmat{\affineedge{G}{p}}$ is just $\dim(\affineedge{G}{p})$, which is at most $\binom{d+1}{2}$ by \cref{lemma:affineedgedim}.  If $G$ is not $\mathcal{R}_d$-independent, then it contains an $\mathcal{R}_d$-circuit $C$ as a subgraph. It is well-known (and can be deduced from \cref{lemma:whiteleygluing}(c)) that such a graph has minimum degree at least $d+1$. Hence by \cref{theorem:conicatinfinity}, the edge directions of $(G,p)$ do not lie on a conic at infinity for any generic realization $(G,p)$. This means by \cref{lemma:affineedgedim} that $\affineedge{G}{p}$ is linearly isomorphic to $\sym{d}$, which has dimension $\binom{d+1}{2}$.
\end{proof}

Following \cref{lemma:affinematroid}, we define the \emph{($d$-dimensional generic) affine edge measurement matroid} of $G$, denoted by $\affinematroid{d}{G}$, as \[\affinematroid{d}{G} = \algmat{\affineedge{G}{p}} \hspace{2em} \text{for any generic } (G,p) \text{ in } \CC^d.\] 
Note that a set of edges $E_0 \subseteq E$ is spanning in $\affinematroid{d}{G}$ if and only if for any generic framework $(G,p)$ in $\CC^d$, the edge directions of the subframework $(G[E_0],p_0)$ corresponding to $E_0$ do not line a conic at infinity.

We can now return to the discussion at the start of the subsection and relate $\affinematroid{d}{G}$ to the generic stress matroid $\stressmatroid{d}{G}$.

\begin{theorem}\label{theorem:affinematroidstressmatroid}
    Let $G$ be a graph. We have $\affinematroid{d}{G} \preceq \stressmatroid{d}{G}^*$. Moreover, if $G$ is \rig{d}, then $\affinematroid{d}{G} = \stressmatroid{d}{G}^*$ holds if and only if $G$ is \globrig{d}. 
\end{theorem}
\begin{proof}
    Let $\omega$ be a generic $d$-stress of $G$, and let $(G,p)$ be a generic framework in $\CC^d$ with $\omega \in S(G,p)$. Since $\affine{G}{p} \subseteq \stresskernel{d}{G}(\omega)$, we have (using \cref{lemma:contactlocusstresskernel}) \[\affineedge{G}{p} = m_{d,G}(\affine{G}{p}) \subseteq \overline{m_{d,G}(\stresskernel{d}{G}(\omega))} = \contactlocus{d}{G}(\omega).\]Combined with \cref{theorem:rigiditycontactlocusmatroid}, this implies \[\affinematroid{d}{G} = \algmat{\affineedge{G}{p}} \preceq \algmat{\contactlocus{d}{G}(\omega)} = \stressmatroid{d}{G}^*,\]
    as claimed.
    
    Now suppose that $G$ is \rig{d}. If $G$ is \globrig{d}, then we have $\affine{G}{p} = \stresskernel{d}{G}(\omega)$ by \cref{theorem:ght}. Hence $\affineedge{G}{p} = \contactlocus{d}{G}(\omega)$, so $\affinematroid{d}{G} = \stressmatroid{d}{G}^*$. Suppose now that $G$ is not \globrig{d}. By \cref{theorem:ght}, we have $\affine{G}{p} \subsetneq \stresskernel{d}{G}(\omega)$, and in particular $\dim(\stresskernel{d}{G}(\omega)) > d(d+1)$. (Note that since $G$ is \rig{d} but not \globrig{d}, it necessarily has at least $d+2$ vertices.) We shall show that $\dim(\contactlocus{d}{G}(\omega)) > d(d+1) - \binom{d+1}{2} = \binom{d+1}{2}$, and hence the rank of $\stressmatroid{d}{G}^*$ is larger than the rank of $\affinematroid{d}{G}$.
    
    To this end, let $n = |V(G)|$ and consider the set $R_p = m_{d,G}^{-1}(m_{d,G}(p))$ of configurations $q$ such that $(G,q)$ is equivalent to $(G,p)$. By \cref{lemma:genericproperties}(b) and the assumption that $G$ is \rig{d}, we have \[\dim(R_p) = dn - \dim(M_{d,G}) = dn - dn + \binom{d+1}{2} = \binom{d+1}{2}.\]
    Now we make the same computation for the restriction $m_{d,G} : \stresskernel{d}{G}(\omega) \rightarrow \contactlocus{d}{G}(\omega)$. The key observation is that by \cref{theorem:connelly}, we have $R_p \subseteq \stresskernel{d}{G}(\omega)$, and hence the fiber of this restricted map over $m_{d,G}(p)$ is still $R_p$. It follows that 
    \begin{align*}
        \dim(\contactlocus{d}{G}(\omega)) = \dim(\stresskernel{d}{G}(\omega)) - \dim(R_p) > d(d+1) - \binom{d+1}{2} = \binom{d+1}{2} = \dim(\affineedge{G}{p}),
    \end{align*}
    as desired.
    
\end{proof}

In particular, \cref{lemma:affinematroid,theorem:affinematroidstressmatroid} imply that if $G$ is not $\mathcal{R}_d$-independent, then the rank of $\stressmatroid{d}{G}^*$ is at least $\binom{d+1}{2}$, and hence the rank of $\stressmatroid{d}{G}$ is at most $|E| - \binom{d+1}{2}$. 

In the previous subsection we proved that sufficiently large edge sets are always spanning in $\stressmatroid{d}{G}$ (\cref{corollary:stressmatroidspanning}); dually, ``small'' edge sets are independent in $\stressmatroid{d}{G}^*$, where small means that the edge set spans at most $d+1$ vertices. We now show that the same holds for the affine edge measurement matroid $\affinematroid{d}{G}$. Combined with the fact that $\affinematroid{d}{G} \preceq \stressmatroid{d}{G}^*$, this gives an alternative proof of \cref{corollary:stressmatroidspanning}.

\begin{proposition}\label{theorem:affineindependent}
    Let $G$ be a graph and let $E_0 \subseteq E(G)$. If $E_0$ spans at most $d+1$ vertices, then it is independent in $\affinematroid{d}{G}$.
\end{proposition}
\begin{proof}
    Let $G_0 = G[E_0]$. Consider a generic realization $(G,p)$ of $G$ in $\CC^d$, and let $(G_0,p_0)$ be the subframework corresponding to $G_0$. Since $|V(G_0)| \leq d+1$, every configuration $q_0 \in (\CC^d)^{V(G_0)}$ arises as an affine image of $p_0$, and hence $\affine{G_0}{p_0} = (\CC^d)^{V(G_0)}$. Every graph on at most $d+1$ vertices are $\mathcal{R}_d$-independent, so we have \[\overline{m_{d,G_0}(\affine{G_0}{p_0})} = \overline{m_{d,G_0}({(\CC^{d})}^{V(G_0)})} = M_{d,G_0} = \CC^{E_0}.\] But clearly \[\pi_{E_0}(\affineedge{G}{p}) = \pi_{E_0}(m_{d,G}(\affine{G}{p})) = m_{d,G_0}(\affine{G_0}{p_0}),\] so we have $\overline{\pi_{E_0}(\affineedge{G}{p})} = \CC^{E_0}$, which shows that $E_0$ is independent in $\algmat{\affineedge{G}{p}} = \affinematroid{d}{G}$.
\end{proof}

When $d=1$, \cref{lemma:affinematroid,theorem:affineindependent} give a complete description of $\affinematroid{d}{G}$.

\begin{corollary}\label{lemma:1dimaffinematroid}
    For any graph $G$, $\affinematroid{1}{G}$ is the rank $1$ uniform matroid on $E(G)$. 
\end{corollary}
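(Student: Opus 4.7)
The plan is to extract this corollary as an immediate consequence of the two preceding results, \cref{lemma:affinematroid} and \cref{theorem:affineindependent}, specialized to $d=1$. Recall that the rank $1$ uniform matroid on $E(G)$ is characterized by two conditions: every singleton is independent, and the total rank equals $1$.

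First I would apply \cref{lemma:affinematroid} with $d=1$ to bound the rank of $\affinematroid{1}{G}$ from above by $\binom{d+1}{2} = \binom{2}{2} = 1$. Next, since we assume throughout the paper that $E(G)$ is nonempty, I would pick an arbitrary edge $e \in E(G)$ and note that $E_0 = \{e\}$ spans exactly $2 = d+1$ vertices, so \cref{theorem:affineindependent} implies that $\{e\}$ is independent in $\affinematroid{1}{G}$. This simultaneously shows that the rank is at least $1$ (giving equality with the upper bound) and that no edge is a loop. A matroid of rank $1$ on $E(G)$ in which every singleton is independent is precisely the rank $1$ uniform matroid on $E(G)$, which is what we want.

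There is really no obstacle here, as the corollary is designed to illustrate what the general definitions collapse to in the simplest case; the entire content of the proof is one invocation each of \cref{lemma:affinematroid} and \cref{theorem:affineindependent}, together with the standing assumption $E(G) \neq \emptyset$.
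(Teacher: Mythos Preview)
Your proposal is correct and matches the paper's own approach, which simply states (without further elaboration) that when $d=1$, \cref{lemma:affinematroid} and \cref{theorem:affineindependent} together give a complete description of $\affinematroid{d}{G}$. You have spelled out exactly this reasoning.
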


Using the preceding results we can determine the one-dimensional generic stress matroid of any graph $G$. We will need the specialization of \cref{lemma:dualconnectivity} to the measurement variety.
\begin{proposition}\label{lemma:stressmatroidconnectivity}
    Let $G$ be a graph, and let $G_i, i \in \{1,\ldots,k\}$ be the $\mathcal{R}_d$-components of $G$. We have $M_{d,G}^* = M_{d,G_1}^* \times \cdots \times M_{d,G_k}^*$, and hence $\stressmatroid{d}{G} = \bigoplus_{i=1}^{k} \stressmatroid{d}{G_i}$.
\end{proposition}

\begin{proposition}\label{theorem:1dimstressmatroid}\emph{(Characterization of the one-dimensional stress matroid.)}\newline
    Let $G = (V,E)$ be a graph, and let $E_1,\ldots,E_k$ be the edge sets of its $2$-connected components. Let $m_i = |E_i|$ for $i \in \{1,\ldots,k\}$. Then $\stressmatroid{1}{G} = \bigoplus_{i=1}^k U_{m_i-1}(E_i)$, where $U_{k}(X)$ denotes the rank $k$ uniform matroid on ground set $X$. 
\end{proposition}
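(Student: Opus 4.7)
The plan is to combine three ingredients that have already been developed in the paper. First, \cref{lemma:stressmatroidconnectivity} gives $\stressmatroid{1}{G} = \bigoplus_{i=1}^k \stressmatroid{1}{G_i}$, where $G_1,\ldots,G_k$ are the $\mathcal{R}_1$-connected components of $G$. As recalled immediately before the statement of the proposition, these components are precisely the $2$-connected components of $G$, and each of them is \globrig{1}. Since direct sums of uniform matroids give the uniform matroids on the corresponding parts, it therefore suffices to show that for a single $2$-connected component $H$ with $|E(H)| = m$, we have $\stressmatroid{1}{H} = U_{m-1}(E(H))$.

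Next, I would appeal to \cref{theorem:affinematroidstressmatroid}. A $2$-connected graph is certainly connected, hence \rig{1}, and by the preceding observation it is also \globrig{1}. The theorem then yields the identity $\affinematroid{1}{H} = \stressmatroid{1}{H}^*$. Combining this with \cref{lemma:1dimaffinematroid}, which asserts that $\affinematroid{1}{H}$ is the rank $1$ uniform matroid on $E(H)$, we obtain $\stressmatroid{1}{H}^* = U_1(E(H))$.

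The final step is purely matroid-theoretic: dualizing the rank $1$ uniform matroid on an $m$-element ground set produces the rank $m-1$ uniform matroid on the same ground set. Taking duals of both sides of the equation above therefore yields $\stressmatroid{1}{H} = U_{m-1}(E(H))$, which is exactly what we wanted. Assembling the pieces for each $\mathcal{R}_1$-component $G_i$ and applying \cref{lemma:stressmatroidconnectivity} gives the claimed decomposition $\stressmatroid{1}{G} = \bigoplus_{i=1}^k U_{m_i-1}(E_i)$.

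There is no real obstacle here, since all the heavy lifting has already been done: the dual-matroid identity for \globrig{d} rigid graphs in \cref{theorem:affinematroidstressmatroid}, the explicit computation of the one-dimensional affine edge measurement matroid in \cref{lemma:1dimaffinematroid}, and the direct-sum decomposition in \cref{lemma:stressmatroidconnectivity} together reduce everything to a trivial dualization of a uniform matroid. The only minor point to verify is the degenerate case of a single-edge block ($m_i=1$), but here $U_1(\{e\})$ dualizes to the loop matroid $U_0(\{e\})$, which correctly reflects the fact that a bridge of $G$ is a loop of $\stressmatroid{1}{G}$ by \cref{corollary:degeneratedual}.
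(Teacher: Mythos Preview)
Your proposal is correct and follows essentially the same route as the paper: reduce to a single $2$-connected component via \cref{lemma:stressmatroidconnectivity}, apply \cref{theorem:affinematroidstressmatroid} together with \cref{lemma:1dimaffinematroid} to identify $\stressmatroid{1}{H}^*$ with $U_1(E(H))$, and dualize. Your extra remark on the degenerate $m_i=1$ case is a nice sanity check that the paper leaves implicit.
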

\begin{proof}
    The $\mathcal{R}_1$-components of a graph are just its $2$-connected components. Using \cref{lemma:stressmatroidconnectivity}, we may assume that $G$ is $2$-connected (i.e., \globrig{1}). From \cref{theorem:affinematroidstressmatroid}, we have that $\stressmatroid{1}{G}$ is the dual of $\affinematroid{1}{G}$, and by \cref{lemma:1dimaffinematroid}, $\affinematroid{1}{G} = U_1(E)$, so $\stressmatroid{1}{G} = U_{|E|-1}(E)$, as required. 
\end{proof}

To close this section, let us consider the matroid $\affinematroid{d}{G}$ for $d \geq 2$.
A computer-assisted proof shows that $\affinematroid{2}{G}$ is the uniform matroid of rank $3 = \binom{2+1}{2}$ for any graph with at least $3$ edges. However, in higher dimensions it is not true that $\affinematroid{d}{G}$ is uniform of rank $\binom{d+1}{2}$. To see this, observe that if $G$ is a graph consisting of two copies of $K_d$, then for any framework $(G,p)$, the edge directions of $(G,p)$ lie on a conic at infinity. (Intuitively, the edge directions all belong to the union of two hyperplanes, which is a degenerate conic.) Hence for any graph $G$, any edge set that can be covered by two copies of $K_d$ has rank less than $\binom{d+1}{2}$ in $\affinematroid{d}{G}$. We go out on a limb and conjecture that this condition characterizes the dependent sets in $\affinematroid{d}{G}$, given that $G$ has at least $\binom{d+1}{2}$ edges.\footnote{The previous version of this paper contained the stronger conjecture that $\affinematroid{d}{G}$ is uniform whenever $G$ has at least $\binom{d+1}{2}$ edges. I am grateful to Steven Gortler for pointing out that this is false.} 

\begin{conjecture}\label{conj:affinematroid}
    Given a graph $G$, a set of edges $E_0 \subseteq E(G)$ with $|E_0| \geq \binom{d+1}{2}$ is spanning in $\affinematroid{d}{G}$ if and only if $G[E_0]$ cannot be written as the subgraph of the union of two (not necessarily disjoint) copies of $K_d$.
\end{conjecture}

We can verify that for $d \leq 3$, the condition in \cref{conj:affinematroid} indeed defines a matroid; however, for $d \geq 4$, even this is unclear.
Note that \cref{conj:affinematroid} would strengthen \cref{theorem:conicatinfinity} by giving a complete characterization of graphs $G$ for which the edge directions of a generic realization $(G,p)$ does not lie on a conic at infinity.

\section{Stress-linked vertex pairs}\label{section:globallylinked}

We now begin investigating the central new idea of this paper: stress-linked vertex pairs. %
Let us note that the results of this section are largely independent of the previous section, except for \cref{theorem:nongloballylinked,theorem:bridgegloballylinked,theorem:gluing} below, whose proofs use the structural results of \cref{subsection:stressmatroid,subsection:spanningindependent} about the measurement variety and the generic stress matroid.

Let $G$ be a graph. Recall that $\stressnullity{G}$ denotes the $d$-dimensional shared stress nullity of $G$, which is the dimension of the one-dimensional shared stress kernel $K_1(G,p)$ for some generic framework $(G,p)$ in $\CC^d$. 
Now let $u,v \in V(G)$ be a pair of vertices. 
We say that $\{u,v\}$ is \emph{\strongly{d} in $G$} if $\{u,v\}$ is $\mathcal{R}_d$-linked in $G$ and $\stressnullity{G+uv} = \stressnullity{G}$. Note that if $uv \in E(G)$, then $\{u,v\}$ trivially satisfies these conditions. 

Intuitively, for an $\mathcal{R}_d$-linked pair of vertices $\{u,v\}$, being $d$-stress-linked means that for every generic framework $(G,p)$, the stress space $S(G+uv,p)$ is uniquely determined already by $S(G,p)$, without needing to know $(G,p)$ itself. This intuition is formalized by part \emph{(b)} of the following proposition, which rephrases the definition of being \strongly{d} in various convenient forms. 
\begin{proposition}\label{lemma:def} \emph{(Equivalent definitions of stress-linked pairs.)}\newline
    Let $G$ be a graph and let $u,v \in V(G)$ be a pair of nonadjacent vertices such that $\{u,v\}$ is $\mathcal{R}_d$-linked in $G$. Let $(G,p)$ be a generic framework in $\CC^d$, and let $\omega \in S(G+uv,p)$ be a stress of $(G+uv,p)$ that is nonzero on $uv$. The following are equivalent.
    \begin{enumerate}
        \item $\{u,v\}$ is \strongly{d} in $G$,
        \item $\sharedstresskernel{d}(G+uv,p) = \sharedstresskernel{d}(G,p)$,
        \item $\omega$ is a stress of $(G+uv,q)$, for every configuration $q \in \sharedstresskernel{d}(G,p)$,
        \item $\omega$ is a stress of $(G+uv,q)$, for every generic configuration $q \in \sharedstresskernel{d}(G,p)$.
    \end{enumerate}
\end{proposition}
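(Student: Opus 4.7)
The plan is to establish the chain $(a) \Leftrightarrow (b) \Leftrightarrow (c) \Leftrightarrow (d)$, resting on the basic containment $\sharedstresskernel{d}(G+uv,p) \subseteq \sharedstresskernel{d}(G,p)$, which comes from the natural embedding of $S(G,p)$ into $S(G+uv,p)$ that extends each stress by $0$ on the new edge $uv$.

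For $(a) \Leftrightarrow (b)$, since $p$ is $\CC^{nd}$-generic, both $(G,p)$ and $(G+uv,p)$ are generic frameworks, so $\dim K_1(H,p) = \stressnullity{H}$ for $H \in \{G, G+uv\}$. As $\sharedstresskernel{d}(H,p) = K_1(H,p)^d$, statement $(a)$ is equivalent to $\dim \sharedstresskernel{d}(G+uv,p) = \dim \sharedstresskernel{d}(G,p)$, which combined with the containment above is equivalent to $(b)$.

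The crucial step is $(b) \Leftrightarrow (c)$. Since $\{u,v\}$ is $\mathcal{R}_d$-linked in $G$, we have $r_d(G+uv) = r_d(G)$, and hence $\dim S(G+uv,p) = \dim S(G,p) + 1$. Because $\omega(uv) \neq 0$, $\omega$ is not in the image of $S(G,p) \hookrightarrow S(G+uv,p)$, so $\omega$ together with any basis of $S(G,p)$ (extended by $0$ on $uv$) forms a basis of $S(G+uv,p)$. Consequently, $q \in \sharedstresskernel{d}(G+uv,p)$ if and only if $q \in \sharedstresskernel{d}(G,p)$ and $\omega \in S(G+uv,q)$, yielding
\[\sharedstresskernel{d}(G+uv,p) = \sharedstresskernel{d}(G,p) \cap \stresskernel{d}{G+uv}(\omega).\]
Hence $(b)$ holds if and only if $\sharedstresskernel{d}(G,p) \subseteq \stresskernel{d}{G+uv}(\omega)$, which is exactly $(c)$.

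Finally, $(c) \Rightarrow (d)$ is immediate, while $(d) \Rightarrow (c)$ follows from a Zariski density argument: the intersection $\sharedstresskernel{d}(G,p) \cap \stresskernel{d}{G+uv}(\omega)$ is a linear (hence Zariski closed) subspace of the linear space $\sharedstresskernel{d}(G,p)$, and since $p \in \sharedstresskernel{d}(G,p)$ is $\CC^{nd}$-generic, \cref{lemma:genericdense} applied with $X = \CC^{nd}$ and $X' = \sharedstresskernel{d}(G,p)$ shows that the $\CC^{nd}$-generic points are Zariski dense in $\sharedstresskernel{d}(G,p)$, forcing equality. The main obstacle is the rank/basis analysis in $(b) \Leftrightarrow (c)$: the $\mathcal{R}_d$-linkedness hypothesis is used precisely to guarantee $\dim S(G+uv,p) = \dim S(G,p) + 1$, so that the given stress $\omega$ (being nonzero on $uv$) genuinely contributes the one new dimension; the remaining equivalences are essentially formal.
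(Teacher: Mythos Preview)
Your proof is correct and follows essentially the same approach as the paper's own proof: both establish $(a) \Leftrightarrow (b)$ via the containment $\sharedstresskernel{d}(G+uv,p) \subseteq \sharedstresskernel{d}(G,p)$ and the dimension formula through $K_1$, both derive $\sharedstresskernel{d}(G+uv,p) = \sharedstresskernel{d}(G,p) \cap \stresskernel{d}{G+uv}(\omega)$ from the fact that $S(G,p) \cup \{\omega\}$ generates $S(G+uv,p)$, and both use the Zariski density of generic points (via \cref{lemma:genericdense}) for $(d) \Rightarrow (c)$. Your write-up is slightly more explicit about the basis argument and the application of \cref{lemma:genericdense}, but the logic is identical.
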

\begin{proof}
    Since the pair $\{u,v\}$ is $\mathcal{R}_d$-linked, it is \strongly{d} if and only if $\stressnullity{G+uv} = \stressnullity{G}$. This is equivalent to $\dim(K_1(G+uv,p)) = \dim(K_1(G,p))$. Since $K_1(G+uv,p) \subseteq K_1(G,p)$ holds by definition, equality of dimensions is equivalent to $K_1(G+uv,p) = K_1(G,p)$. Using the observation that $\sharedstresskernel{d}(G,p) = {(K_1(G,p))}^d$, this is further equivalent to $\sharedstresskernel{d}(G+uv,p) = \sharedstresskernel{d}(G,p)$. This shows the equivalence of  \textit{(a)} and \textit{(b)}. 
    
    Using a slight abuse of notation, let us identify $S(G,p)$ with the subspace of $S(G+uv,p)$ consisting of stresses that vanish on $uv$. A dimension count shows that $S(G+uv,p)$ is generated by $S(G,p) \cup \{\omega\}$, and hence  $\sharedstresskernel{d}(G+uv,p) = \sharedstresskernel{d}(G,p) \cap \stresskernel{d}{G+uv}(\omega)$. In other words,  $\sharedstresskernel{d}(G+uv,p) = \sharedstresskernel{d}(G,p)$ holds if and only if $\sharedstresskernel{d}(G,p) \subseteq \stresskernel{d}{G+uv}(\omega)$, which is the content of \textit{(c)}. This shows the equivalence of \textit{(b)} and \textit{(c)}. 
    
    Condition \textit{(c)} clearly implies \textit{(d)}. For the reverse direction, note that since $(G,p)$ is generic, the linear space $\sharedstresskernel{d}(G,p)$ contains a generic configuration, and hence by \cref{lemma:genericdense}, the generic configurations are Zariski dense in it. The condition that $\omega$ is a stress of $(G+uv,q)$ is described by polynomial equations, so if it holds for a Zariski dense subset of $\sharedstresskernel{d}(G,p)$, then it holds for all of it.
\end{proof}

Our main motivation for studying \strongly{d} vertex pairs is the following theorem, which is one of the main results of this paper.

\begin{theorem}\label{theorem:nongloballylinked}
    Let $G$ be a graph and let $u,v \in V(G)$ be a pair of vertices. If $\{u,v\}$ is \strongly{d} in $G$, then $\{u,v\}$ is globally linked in $G$ in $\CC^d$.
\end{theorem}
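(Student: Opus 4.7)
The plan is to unpack the \strongly{d} hypothesis via \cref{lemma:def} and combine it with Connelly's theorem to get that any framework $(G,q)$ equivalent to a generic $(G,p)$ must lie in the shared stress kernel $\sharedstresskernel{d}(G+uv,p)$. From there, the conclusion follows from a short computation involving a carefully chosen stress of $(G+uv,p)$ and the classical ``trace identity'' for stress matrices.

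First I would dispense with the trivial case $uv \in E(G)$, where $m_{uv}(p) = m_{uv}(q)$ is immediate from the definition of equivalence. So assume $uv \notin E(G)$. Since $\{u,v\}$ is $\mathcal{R}_d$-linked, the edge $uv$ lies in some $\mathcal{R}_d$-circuit of $G+uv$, and hence there exists a stress $\omega \in S(G+uv,p)$ with $\omega(uv) \neq 0$. Next, let $(G,q)$ be an arbitrary framework in $\CC^d$ equivalent to $(G,p)$. By \cref{theorem:connelly} we have $S(G,p) = S(G,q)$, so in particular $q \in \sharedstresskernel{d}(G,p)$. Invoking the equivalence of conditions \textit{(a)} and \textit{(b)} in \cref{lemma:def} (valid because $\{u,v\}$ is \strongly{d} and, by our assumption, nonadjacent and $\mathcal{R}_d$-linked), we get $\sharedstresskernel{d}(G+uv,p) = \sharedstresskernel{d}(G,p)$. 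Therefore $q \in \sharedstresskernel{d}(G+uv,p)$, which means precisely that $\omega$ is a stress of $(G+uv,q)$ as well.

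Finally, I would translate ``$\omega$ is a stress of both $(G+uv,p)$ and $(G+uv,q)$'' into the desired equality of squared distances. Let $\Omega$ denote the $|V| \times |V|$ stress matrix of $\omega$ and let $P$ be the $|V| \times d$ matrix whose rows are the vectors $p(w)$. A direct expansion shows that for any configuration $p$,
\[
\sum_{e \in E(G+uv)} \omega(e)\, m_e(p) \;=\; \operatorname{tr}\bigl(P^T \Omega P\bigr).
\]
When $\omega$ is a stress of $(G+uv,p)$ we have $\Omega P = 0$, so the right-hand side vanishes; applying this to both $p$ and $q$ and subtracting, using that $m_e(p) = m_e(q)$ for every $e \in E(G)$, yields
\[
\omega(uv)\bigl(m_{uv}(p) - m_{uv}(q)\bigr) = 0.
\]
Since $\omega(uv) \neq 0$, this gives $m_{uv}(p) = m_{uv}(q)$, as desired.

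The argument is short and essentially has no serious obstacle: the conceptual work has already been done in establishing \cref{lemma:def} and in the general theory recalled from \cite{gortler.etal_2010}. The only point requiring slight care is ensuring the existence of a stress $\omega$ with $\omega(uv) \neq 0$, which is why one needs to include $\mathcal{R}_d$-linkedness as part of the definition of being \strongly{d} (rather than just the numerical condition $\stressnullity{G+uv} = \stressnullity{G}$). Once such an $\omega$ is in hand, the proof reduces to the one-line trace identity above.
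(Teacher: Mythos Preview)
Your proof is correct and takes a genuinely different route from the paper's. The paper argues by contradiction using the duality machinery of \cref{section:genericstressmatroid}: it takes a \emph{generic} $d$-stress $\omega'$ of $G+uv$, observes that if $\omega'$ were a stress of both $(G+uv,p)$ and $(G+uv,q)$ then both $m_{d,G+uv}(p)$ and $m_{d,G+uv}(q)$ would lie in the contact locus $\contactlocus{d}{G+uv}(\omega')$, and then invokes the linearity theorem for contact loci to deduce that the whole line $\{m_{d,G}(p)\}\times\CC$ lies in $M_{d,G+uv}$, forcing $uv$ to be an $\mathcal{R}_d$-bridge. Your argument instead uses only the elementary trace identity $\sum_e \omega(e)\,m_e(p)=\operatorname{tr}(P^T\Omega P)$, which is essentially Euler's relation for the homogeneous map $m_{d,G+uv}$ and says that $m_{d,G+uv}(p)\in\omega^\perp$ whenever $\omega\in S(G+uv,p)$. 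This bypasses the generic stress variety, the linearity theorem, and even the genericity of $\omega$; any stress with $\omega(uv)\neq 0$ suffices. The paper's route has the advantage of exercising the same contact-locus framework that is essential later (for instance in \cref{theorem:bridgegloballylinked}), but yours is shorter and self-contained. One cosmetic point: you should explicitly fix a generic realization $(G,p)$ at the outset, before writing $S(G+uv,p)$ and appealing to \cref{theorem:connelly}.
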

\begin{proof}
    Suppose, in the contrapositive, that $\{u,v\}$ is $\mathcal{R}_d$-linked in $G$ but not globally linked in $G$ in $\CC^d$. In particular, $u$ and $v$ must necessarily be nonadjacent in $G$. Let $(G,p)$ and $(G,q)$ be equivalent frameworks in $\CC^d$ with $(G,p)$ generic and such that $(G+uv,p)$ and $(G+uv,q)$ are not equivalent. Let $G' = G+uv$, and let $\omega'$ be a generic $d$-stress of $G'$ with $\omega' \in S(G',p)$. By \cref{theorem:connelly}, $q \in \sharedstresskernel{d}(G,p)$. Thus by \cref{lemma:def}(c), in order to show that $\{u,v\}$ is not \strongly{d} in $G$, it is enough to show that $\omega'$ is not a stress of $(G',q)$. 
    
    Suppose, for a contradiction, that it is. Then $q \in \stresskernel{d}{G'}(\omega')$, and hence the contact locus $\contactlocus{d}{G'}(\omega') = \overline{m_{d,G'}(\stresskernel{d}{G'}(\omega'))}$ contains both $m_{d,G'}(p)$ and $m_{d,G'}(q)$ (cf.\ \cref{lemma:contactlocusstresskernel}). Since $\contactlocus{d}{G'}(\omega') \subseteq M_{d,G'}$ is a linear space, this means that the fiber of the projection $\pi_E : M_{d,G'} \rightarrow M_{d,G}$ over $m_{d,G}(p)$  is all of $\{m_{d,G}(p)\} \times \CC$; in particular, it is one-dimensional. Since $m_{d,G'}(p)$ is $M_{d,G'}$-generic by \cref{lemma:genericimage}, \cref{lemma:genericproperties}(b) now implies that $\dim(M_{d,G'}) - \dim(M_{d,G}) = 1$, or equivalently, that $r_d(G') = r_d(G) + 1$. This means that $\{u,v\}$ is not $\mathcal{R}_d$-linked in $G$, a contradiction. 
\end{proof}

It is natural to also consider the converse of \cref{theorem:nongloballylinked}: is it true that if $\{u,v\}$ is globally linked in $G$ in $\CC^d$ (or in $\RR^d$), then $\{u,v\}$ is \strongly{d}? We shall return to this question in \cref{section:concluding}. We note that the answer is affirmative when every pair of vertices is globally linked, that is, when $G$ is \globrig{d}.

\begin{proposition}\label{lemma:globallyrigidstronglylinked}
    A graph $G$ is \globrig{d} if and only if $\{u,v\}$ is \strongly{d} in $G$ for all pairs $u,v \in V(G)$.
\end{proposition}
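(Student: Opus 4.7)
The plan is to verify directly the two conditions defining a \strongly{d} pair: that $\{u,v\}$ is $\mathcal{R}_d$-linked in $G$, and that $\stressnullity{G+uv} = \stressnullity{G}$. First I would observe that if $uv \in E(G)$, then $G+uv = G$ and both conditions hold trivially, so assume $uv \notin E(G)$. In this case $G$ cannot be a complete graph, and since a \globrig{d} graph on at most $d+1$ vertices must be complete, $G$ has at least $d+2$ vertices.

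For the $\mathcal{R}_d$-linkedness, I would use that a \globrig{d} graph on $n \geq d+1$ vertices is in particular \rig{d}, so $r_d(G) = nd - \binom{d+1}{2}$, which is the maximum possible value of the rigidity matroid rank on $V(G)$. Consequently adding the edge $uv$ cannot increase the rank, i.e., $r_d(G+uv) = r_d(G)$, so $\{u,v\}$ is $\mathcal{R}_d$-linked by definition.

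For the shared stress nullity condition, I would first record the standard observation that $G+uv$ is also \globrig{d}: given a generic realization $(G+uv,p)$ and any equivalent $(G+uv,q)$ in $\CC^d$, the restriction $(G,p)$ is still generic and $(G,q)$ is equivalent to it, so by the global $d$-rigidity of $G$, the configurations $p$ and $q$ are congruent, which in particular means that every pairwise distance (including $m_{uv}$) is preserved. Since both $G$ and $G+uv$ are \globrig{d} graphs on at least $d+2$ vertices, \cref{theorem:stresskernel} applies to both and yields $\stressnullity{G} = d+1 = \stressnullity{G+uv}$, which completes the proof.

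There is no real obstacle here: the statement reduces almost immediately to the Gortler–Healy–Thurston characterization of \globrig{d} graphs in terms of the shared stress nullity (\cref{theorem:stresskernel}), together with the two essentially trivial observations above. One could equivalently argue via \cref{lemma:def}(b) using \cref{theorem:stresskernel2}, since both $\sharedstresskernel{d}(G,p)$ and $\sharedstresskernel{d}(G+uv,p)$ coincide with $A(G,p)$, but the dimension-counting route through \cref{theorem:stresskernel} is the most direct.
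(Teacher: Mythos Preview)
Your proof is correct and follows essentially the same approach as the paper: both reduce the claim to \cref{theorem:stresskernel} (together with the easy observation that a \globrig{d} graph is \rig{d}, hence every pair is $\mathcal{R}_d$-linked). The only cosmetic difference is that you explicitly argue $G+uv$ is \globrig{d} before applying \cref{theorem:stresskernel} to it, whereas the paper simply invokes the theorem for both $G$ and $G+uv$ in one line; either presentation is fine.
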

\begin{proof}
    The ``if'' direction follows from \cref{theorem:nongloballylinked}. For the other direction, let us assume that $G$ is \globrig{d}.
    If $G$ has at most $d+1$ vertices, then it is complete, and the statement is vacuous. Otherwise, it follows from \cref{theorem:ght} that for every pair of vertices $u,v \in V(G)$, we have $d+1 = \stressnullity{G} = \stressnullity{G+uv}$. Since $G$ is \globrig{d}, it is also \rig{d}, and hence every pair of vertices is $\mathcal{R}_d$-linked in $G$. This shows that every pair of vertices is \strongly{d} in $G$.
\end{proof}

We can also characterize \strongly{d} vertex pairs in terms of so-called Gauss fibers. Given a generic framework $(G,p)$ in $\CC^d$, its \emph{Gauss fiber} is the set $L(G,p) = \overline{m_{d,G}(\sharedstresskernel{d}(G,p))}$. It can be shown that the Gauss fiber of $(G,p)$ is the intersection of the contact loci $\contactlocus{d}{G}(\omega)$ for $\omega \in S(G,p)$. In particular, it is a linear space. The following statement can be shown by a dimension counting argument similar to the one we saw in the proof of \cref{theorem:affinematroidstressmatroid}. Since we shall not use this result, we defer its proof to \cref{appendix:stresslinked}.

\begin{proposition}\label{lemma:gaussfiberchar} \emph{(Geometric characterization of stress-linked pairs.)}\newline
    Let $G$ be a graph, let $(G,p)$ be a generic framework in $\CC^d$, and let $u,v \in V(G)$ be a pair of vertices. The vertex pair $\{u,v\}$ is \strongly{d} in $G$ if and only if $\dim(L(G+uv,p)) = \dim(L(G,p))$.
\end{proposition}

\subsection{Basic properties of stress-linked pairs}

In this subsection we establish a number of basic results about \strongly{d} vertex pairs. We start with the most interesting one: that $\mathcal{R}_d$-bridges and \strongly{d} pairs ``do not interact'' with each other. More precisely, we shall show that deleting an $\mathcal{R}_d$-bridge of a graph preserves its \strongly{d} pairs, and similarly, adding an edge between a \strongly{d} vertex pair preserves its $\mathcal{R}_d$-bridges. We need the following easy observation.

\begin{lemma}\label{lemma:bridgeconverse}
    Let $G$ be a graph and let $u,v \in V(G)$ be a nonadjacent pair of vertices. If $\{u,v\}$ is not $\mathcal{R}_d$-linked in $G$, then $\stressnullity{G} = \stressnullity{G+uv}$.
\end{lemma}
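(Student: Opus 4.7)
The plan is to observe that the hypothesis makes $uv$ an $\mathcal{R}_d$-bridge of $G+uv$, and then invoke the standard fact that every equilibrium stress of a generic realization vanishes on such a bridge edge. Once this is established, stresses of $(G,p)$ and $(G+uv,p)$ correspond to each other in an obvious way, and their one-dimensional shared stress kernels coincide, giving the equality of shared stress nullities.

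In more detail, the assumption that $\{u,v\}$ is not $\mathcal{R}_d$-linked in $G$ is exactly $r_d(G+uv) = r_d(G)+1$, so $uv$ is an $\mathcal{R}_d$-bridge in $G+uv$. Fix a generic realization $(G+uv,p)$ in $\CC^d$. Recall from \cref{subsection:rigiditymatroid} that $\mathcal{R}_d(G+uv)$ coincides with the row matroid of the rigidity matrix $R(G+uv,p)$. Since $uv$ is a bridge, the row indexed by $uv$ is not in the linear span of the rows indexed by $E(G)$. But if some $\omega \in S(G+uv,p)$ had $\omega(uv)\neq 0$, then $\omega^T R(G+uv,p)=0$ could be rearranged into an explicit expression of the $uv$-th row as a combination of the others, a contradiction. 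Hence every stress of $(G+uv,p)$ is supported on $E(G)$.

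Consequently, restriction to $E(G)$ yields a linear isomorphism $S(G+uv,p) \xrightarrow{\sim} S(G,p)$, whose inverse extends a stress of $G$ by zero on $uv$. For any such $\omega$ the equilibrium conditions defining $\stresskernel{1}{G+uv}(\omega)$ and $\stresskernel{1}{G}(\omega)$ are literally the same polynomial equations at each vertex, since the would-be $uv$-contribution is multiplied by $\omega(uv)=0$. Intersecting these one-dimensional stress kernels over $\omega \in S(G+uv,p) \cong S(G,p)$ therefore gives $K_1(G+uv,p) = K_1(G,p)$, and taking dimensions yields $\stressnullity{G+uv} = \stressnullity{G}$.

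No genuine obstacle is expected here; the lemma is essentially a bookkeeping consequence of the fact that bridges carry no stress. The only step with content is the linear-algebra observation in the second paragraph, which is a routine use of the identification of the rigidity matroid with the row matroid of $R(G+uv,p)$ at a generic configuration; an alternative, equivalent route would be to cite \cref{corollary:degeneratedual} (which says precisely that $\mathcal{R}_d$-bridges are loops in the stress matroid) to conclude that $uv$ lies in the support of no \quasigen\ stress.
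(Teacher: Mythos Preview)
Your argument is correct and follows the same approach as the paper: the paper's proof simply states in one line that the assumption forces $S(G+uv,p)$ to consist only of stresses supported on $E$, and then concludes $\sharedstresskernel{d}(G,p)=\sharedstresskernel{d}(G+uv,p)$. You have just unpacked that one-line assertion with the explicit row-matroid argument, so the two proofs are essentially identical in content.
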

\begin{proof}
    Let $(G,p)$ be a generic realization in $\CC^d$. The assumption on $\{u,v\}$ means that $S(G+uv,p)$ only consists of stresses supported on $E$. It follows that $\sharedstresskernel{d}(G,p) = \sharedstresskernel{d}(G+uv,p)$, and thus $\stressnullity{G} = \stressnullity{G+uv}$.
\end{proof}

\begin{theorem}\label{theorem:bridgegloballylinked}\emph{(Stress-linked pairs and bridges.)}\newline
    Let $G = (V,E)$ be a graph, and let us fix an edge $e \in E$ and a nonadjacent pair of vertices $u,v \in V$ of $G$. Suppose that $e$ is an $\mathcal{R}_d$-bridge in $G$ and $\{u,v\}$ is \strongly{d} in $G$. Then $\{u,v\}$ is \strongly{d} in $G-e$.
\end{theorem}
\begin{proof}
    Let $G' = G + uv$. The assumptions on $e$ and $\{u,v\}$ combined with \cref{lemma:bridgeconverse} imply that $\stressnullity{G-e} = \stressnullity{G} = \stressnullity{G'}$, and since $\stressnullity{G-e} \geq \stressnullity{G'-e} \geq \stressnullity{G'}$, it follows that $\stressnullity{G-e} = \stressnullity{G'-e}$. Thus we only need to show that $\{u,v\}$ is $\mathcal{R}_d$-linked in $G-e$. For a contradiction, suppose that this is not the case. Then $r_d(G) = r_d(G'-e) = r_d(G')$, so neither $e$ nor $uv$ is an $\mathcal{R}_d$-bridge in $G'$. 
    
    Let $(G,p)$ be a generic framework of $G$ in $\CC^d$, and let $\omega'$ be a generic $d$-stress of $(G',p)$. We recall \cref{theorem:rigiditycontactlocusmatroid}, which states that the algebraic matroid $\algmat{\contactlocus{d}{G'}(\omega')}$ of the contact locus $\contactlocus{d}{G'}(\omega')$ is dual to the generic stress matroid $\stressmatroid{d}{G'}$. Note that since neither $e$ nor $uv$ is an $\mathcal{R}_d$-bridge in $G'$, by \cref{corollary:degeneratedual}, they are not loops in $\stressmatroid{d}{G'}$. We shall show that $E - e$ is not spanning in $\algmat{\contactlocus{d}{G'}(\omega')}$, and hence by duality $\{e,uv\}$, is a circuit in $\stressmatroid{d}{G'}$, contradicting \cref{theorem:stressindependence}. 
    
    Let $R_p = m_{d,G-e}^{-1}(m_{d,G-e}(p))$ denote the set of configurations $q$ for which $(G-e,q)$ is equivalent to $(G-e,p)$. \cref{theorem:connelly} and the fact that $(G-e,p)$ is generic imply that $R_p \subseteq \sharedstresskernel{d}(G-e,p)$. On the other hand, since $\stressnullity{G-e} = \stressnullity{G'}$, we have $\sharedstresskernel{d}(G-e,p) = \sharedstresskernel{d}(G',p)$, from which we deduce that
    \begin{equation}\label{eq:fiberstresskernel}
        R_p \subseteq \sharedstresskernel{d}(G-e,p) = \sharedstresskernel{d}(G',p) \subseteq \stresskernel{d}{G'}(\omega').    
    \end{equation}
     By \cref{lemma:contactlocusstresskernel}, we have $\contactlocus{d}{G'}(\omega') = \overline{m_{d,G'}(\stresskernel{d}{G'}(\omega'))}$, so \cref{eq:fiberstresskernel} implies $m_{d,G'}(R_p) \subseteq \contactlocus{d}{G'}(\omega)$. In particular, the fiber $F$ of the projection $\pi_{E-e} : \contactlocus{d}{G'}(\omega') \rightarrow M_{d,G-e}$ over $m_{d,G-e}(p)$ contains $m_{d,G'}(R_p)$. (See \cref{fig:bridgeproof}.) Since $e$ is an $\mathcal{R}_d$-bridge in $G$, $m_{d,G'}(R_p)$ is an infinite set, and hence so is the fiber $F$. Since $\contactlocus{d}{G'}(\omega')$ is a linear space, \cref{lemma:linearspanning} now implies that $E-e$ is not spanning in $\algmat{\contactlocus{d}{G'}(\omega')}$, and thus $\{e,uv\}$ is a circuit in $\stressmatroid{d}{G'}$, a contradiction.     
\end{proof}

\usetikzlibrary{decorations.pathmorphing}
\usetikzlibrary{calc}
\tikzset{snake it/.style={decorate, decoration=snake}}

\begin{figure}[t]
    \centering
        \begin{tikzpicture}[x = 1cm, y = 1cm, scale = 1.3]
            \node (1) at (0,0){};
            \node (2) at ($(1) + (3,0)$){};
            \node (3) at ($(2) + (1.1,1.1)$){};
            \node [label={[label distance=0pt]90:$K_{d,G'}(\omega)$}](4) at ($(1) + (1.1,1.1)$){};
        \draw [line width=\normaledge,color=edgeblack] (1.center) -- (2.center) -- (3.center) -- (4.center) -- (1.center);

        \node (rp) at ($(1) + (.9,.22)$){$R_p$};

            \node [label={[label distance=-7pt]above left:$p$}](p) at (2,.65){};
            \draw [fill=vertexblack] (p) circle (1.5pt);
 
        \draw [black, line width=.7] plot [smooth] coordinates {(0.3,0.3)  (p) (3.3,0.3)};
        
            \node (5) at (7.7,1.3){};
            \node (6) at ($ (5) + (3,.8) $){};
            \node (7) at ($(6) + (0,1.4)$){};
            \node [label={[label distance=0pt]90:$L_{d,G'}(\omega)$}](8) at ($(5) + (0,1.4)$){};
        \draw [line width=\normaledge,color=edgeblack] (5.center) -- (6.center) -- (7.center) -- (8.center) -- (5.center);  

        \node (56h) at ($(5)!0.5!(6)$) {};
        \node (78h) at ($(7)!0.5!(8)$) {};

        \node [label={[label distance=-7pt]below left:$m_{d,G'}(p)$}](p2) at ($(56h)!0.5!(78h)$){};
        \draw [fill=vertexblack] (p2) circle (1.5pt);
        \path [draw=black,line width=.8,decorate, decoration={snake, amplitude=.25mm}]  (56h.center) -- (78h.center) node[midway,label={[label distance=2pt]85:$m_{d,G'}(R_p)$}]{};

        \draw [->] (4.6,1.3) -- (7,1.9) node[midway,sloped,above]{$m_{d,G'}$};

        \draw [->] (4.6,0.1) -- (7,-.4) node[midway,sloped,below]{$m_{d,G-e}$};

        \node (bottomleft) at ($(5) + (0,-2)$){};
        \node (bottomright) at ($(6) + (0,-2)$){};
        \node [label={[label distance=-5pt]below right:$m_{d,G-e}(R_p)$}](p3) at ($(bottomleft)!0.5!(bottomright)$){};

        \draw [line width=.8,color=edgeblack] (bottomleft.center) -- (bottomright.center);  

        \draw [fill = vertexblack] (p3) circle (1.5pt);
         
        \draw [->,shorten <=0.5cm, shorten >=0.5cm] (56h) -- (p3) node[midway,right]{$\pi_{E - e}$};
        
        \end{tikzpicture}
    \caption{An illustration of the various spaces appearing in the proof of \cref{theorem:bridgegloballylinked}.}\label{fig:bridgeproof}
\end{figure}

\begin{theorem}\label{corollary:bridges} \emph{(Cocircuit theorem for stress-linked pairs.)} \newline
    Let $G$ be a graph and let $u,v \in V(G)$ be a pair of vertices. If $\{u,v\}$ is \strongly{d} in $G$, then the set of $\mathcal{R}_d$-bridges is the same in $G$ and $G+uv$. 
\end{theorem}
\begin{proof}
    Let $e$ be an edge of $G$. Clearly, if $e$ is an $\mathcal{R}_d$-bridge in $G+uv$, then it is also an $\mathcal{R}_d$-bridge in $G$. Conversely, if $e$ is an $\mathcal{R}_d$-bridge in $G$, then by \cref{theorem:bridgegloballylinked}, $\{u,v\}$ is \strongly{d}, and in particular $\mathcal{R}_d$-linked, in $G-e$. Thus we have $r_d(G+uv) = r_d(G) > r_d(G-e) = r_d(G + uv - e)$, which shows that $e$ is a bridge in $G+uv$.
\end{proof}

In matroidal terms, \cref{corollary:bridges} says that if $\{u,v\}$ is \strongly{d} in $G$, then $uv$ is not contained in any size two cocircuit of $\mathcal{R}_d(G+uv)$. As we will see, this property of stress-linked pairs makes them a useful tool even in problems unrelated to global rigidity or globally linked pairs.

We note that \cref{corollary:bridges} implies Hendrickson's theorem~\cite{hendrickson_1992}, which says that \globrig{d} graphs on at least $d+2$ vertices are redundantly \rig{d}, or in other words, they are \rig{d} and do not contain $\mathcal{R}_d$-bridges. Indeed, if $G$ is \globrig{d}, then by \cref{lemma:globallyrigidstronglylinked}, every pair of vertices is \strongly{d} in $G$, so by \cref{corollary:bridges}, the set of $\mathcal{R}_d$-bridges of $G$ is the same as the set of $\mathcal{R}_d$-bridges of the complete graph on $V(G)$, which is easily seen to be bridgeless when $|V(G)| \geq d+2$. Although at first sight this appears to be a novel proof of Hendrickson's theorem, we implicitly used \cref{theorem:ght} (through \cref{lemma:globallyrigidstronglylinked}), which quickly implies Hendrickson's theorem by itself.

As a simple corollary of \cref{lemma:largenullity}(a), we show that in an $\mathcal{R}_d$-independent graph, the only \strongly{d} vertex pairs are the ones corresponding to edges of the graph. It is an open question whether the analogous statement holds for globally linked pairs in $\RR^d$ (or in $\CC^d$). It is known to be true in the special case when $d=2$ and the graph is minimally \rig{2}, see~\cite[Corollary 2]{jackson.etal_2014}.

\begin{proposition}
    If $G$ is $\mathcal{R}_d$-independent, then a pair $\{u,v\}$ of vertices is \strongly{d} in $G$ if and only if $uv \in E(G)$.
\end{proposition}
\begin{proof}
    If $uv \in E(G)$, then $\{u,v\}$ is clearly \strongly{d} in $G$. Otherwise, either $\{u,v\}$ is not $\mathcal{R}_d$-linked, or $G+uv$ is not $\mathcal{R}_d$-independent. In the latter case, we have $\stressnullity{G+uv} <  |V(G)| = \stressnullity{G}$ by \cref{lemma:largenullity}, so $\{u,v\}$ is not \strongly{d}.
\end{proof}

We close this subsection by noting that being \strongly{d} is preserved under taking supergraphs and under separation along a complete graph. The (somewhat technical but routine) proofs can be found in \cref{appendix:stresslinked}.

\begin{lemma}\label{lemma:stronglylinkedsubgraph} \emph{(Subgraph lemma for stress-linked pairs.)}\newline
    Let $G$ be a graph, let $G_0$ be a subgraph of $G$, and let $u,v \in V(G_0)$ be a nonadjacent pair of vertices. If $\{u,v\}$ is \strongly{d} in $G_0$, then $\{u,v\}$ is \strongly{d} in $G$.
\end{lemma}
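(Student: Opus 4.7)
My plan is to reduce the property for $G$ to the property for $G_0$ by restricting configurations from $V(G)$ to $V(G_0)$ and extending stresses from $G_0$ to $G$ by zeros. Throughout the plan I may assume $uv \notin E(G)$, since otherwise there is nothing to prove.

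First, I would dispatch the easier condition that $\{u,v\}$ is $\mathcal{R}_d$-linked in $G$. Since $\{u,v\}$ is $\mathcal{R}_d$-linked in $G_0$, there is an $\mathcal{R}_d$-circuit $C \subseteq G_0 + uv$ containing $uv$. As $G_0 + uv$ is a subgraph of $G + uv$, the same $C$ is a circuit in $\mathcal{R}_d(G+uv)$, so $uv$ is not an $\mathcal{R}_d$-bridge in $G+uv$, i.e.\ $\{u,v\}$ is $\mathcal{R}_d$-linked in $G$. This also gives $\dim S(G+uv,p) = \dim S(G,p) + 1$ for generic $(G,p)$, so $S(G+uv,p)$ is generated by $S(G,p)$ together with any single stress $\omega$ with $\omega(uv) \neq 0$.

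Next, for the shared stress nullity, I would fix a generic realization $(G,p)$ in $\CC^d$ and set $p_0 = p|_{V(G_0)}$, which is a generic realization of $G_0$. Using the characterization $\stressnullity{G} = \tfrac{1}{d}\dim \sharedstresskernel{d}(G,p)$ and the trivial inclusion $\sharedstresskernel{d}(G+uv,p) \subseteq \sharedstresskernel{d}(G,p)$, it suffices to verify that $\sharedstresskernel{d}(G,p) \subseteq \sharedstresskernel{d}(G+uv,p)$. So take $q \in \sharedstresskernel{d}(G,p)$ and let $q_0 = q|_{V(G_0)}$. The first key step is to observe that any stress $\omega_0 \in S(G_0,p_0)$ extends by zero on $E(G) \setminus E(G_0)$ to a stress $\omega \in S(G,p)$: the equilibrium at a vertex outside $V(G_0)$ is trivial because all relevant coordinates vanish, while at a vertex of $V(G_0)$ it reduces to the equilibrium for $\omega_0$ at $p_0$. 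Applying this extension shows $q_0 \in \sharedstresskernel{d}(G_0,p_0)$.

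Now I would pick a stress $\omega_0 \in S(G_0+uv,p_0)$ with $\omega_0(uv) \neq 0$ and let $\omega$ be its zero-extension to $E(G+uv)$; the same argument shows $\omega \in S(G+uv,p)$ with $\omega(uv) \neq 0$, so $\omega$ together with $S(G,p)$ spans $S(G+uv,p)$. Since $q \in \sharedstresskernel{d}(G,p)$, every stress of $(G,p)$ is automatically satisfied at $q$, and by \cref{lemma:def}\textit{(c)} applied to the stress-linked pair $\{u,v\}$ in $G_0$, the stress $\omega_0$ is satisfied at $q_0$. Zero-extending that equilibrium identity back to $G+uv$ shows $\omega$ is a stress of $(G+uv,q)$. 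Hence $q \in \sharedstresskernel{d}(G+uv,p)$, completing the proof.

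The steps are all elementary once the setup is right; the only potential obstacle is the bookkeeping of checking that zero-extension preserves equilibrium in both directions (from $G_0$ to $G$ and from $G_0+uv$ to $G+uv$), and in particular that the restriction $q_0$ genuinely lies in $\sharedstresskernel{d}(G_0,p_0)$, which is what allows the hypothesis to be invoked.
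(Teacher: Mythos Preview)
Your proposal is correct and follows essentially the same approach as the paper's proof: both arguments zero-extend a stress $\omega_0 \in S(G_0+uv,p_0)$ with $\omega_0(uv)\neq 0$ to $G+uv$, observe that the restriction $q_0$ of any $q\in\sharedstresskernel{d}(G,p)$ lies in $\sharedstresskernel{d}(G_0,p_0)$ via the same zero-extension of stresses, and then invoke \cref{lemma:def}\textit{(c)} for $G_0$ to conclude. The only cosmetic difference is that the paper phrases the reduction via \cref{lemma:def}\textit{(c)} for $G$ (``it suffices to show $\omega$ is a stress of $(G+uv,q)$''), while you phrase it via \cref{lemma:def}\textit{(b)} (``it suffices to show $\sharedstresskernel{d}(G,p)\subseteq\sharedstresskernel{d}(G+uv,p)$''); these are equivalent.
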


\begin{lemma}\label{lemma:completeseparator}\emph{(Clique separators and stress-linked pairs.)}\newline
    Let $G = (V,E)$ be the union of the graphs $G_1 = (V_1,E_1)$ and $G_2 = (V_2,E_2)$, and suppose that $G_1 \cap G_2$ is either empty or a complete graph. Let $u,v \in V_1$ be nonadjacent a pair of vertices. Then $\{u,v\}$ is \strongly{d} in $G$ if and only if $\{u,v\}$ is \strongly{d} in $G_1$.
\end{lemma}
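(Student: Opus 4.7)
The plan is to prove the two directions separately. The ``if'' direction---$\{u,v\}$ \strongly{d} in $G_1$ implies the same in $G$---is immediate from \cref{lemma:stronglylinkedsubgraph} since $G_1$ is a subgraph of $G$. For the ``only if'' direction, the idea is to establish a gluing formula for the shared stress nullity,
\[\stressnullity{G} = \stressnullity{G_1} + \stressnullity{G_2} - \stressnullity{K_S},\]
where $S = V_1 \cap V_2$ (with the convention $\stressnullity{K_\emptyset} = 0$ in the disjoint case). Applying the same formula to $G + uv = (G_1 + uv) \cup G_2$, which has the same intersection $K_S$, and subtracting yields $\stressnullity{G} - \stressnullity{G+uv} = \stressnullity{G_1} - \stressnullity{G_1+uv}$, so the ``equal stress nullity'' part of the definition of being \strongly{d} transfers between $(G,G+uv)$ and $(G_1, G_1+uv)$.

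To prove the gluing formula, fix a generic realization $(G,p)$ and apply \cref{lemma:ksum}(a) (valid because $K_S$ is \rig{d}): every stress of $(G,p)$ decomposes into stresses supported on $E_1$ and on $E_2$. Hence a one-dimensional configuration $q \in \CC^V$ lies in $K_1(G,p)$ if and only if each restriction $q|_{V_i}$ lies in $A_i := K_1(G_i, p_i)$, placing $K_1(G,p)$ in the short exact sequence
\[0 \to K_1(G,p) \to A_1 \times A_2 \to K_1(K_S, p|_S) \to 0\]
via $(a_1, a_2) \mapsto a_1|_S - a_2|_S$. The key is to verify that $\pi_S(A_i) = K_1(K_S, p|_S)$ for both $i$. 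The inclusion ``$\subseteq$'' holds because elements of $A_i$ satisfy all stresses of $(G_i,p_i)$, and in particular those obtained by extending stresses of the subframework $(K_S, p|_S)$ by zero. The reverse inclusion follows because $A_i$ always contains the $d$ coordinate slices $p_i^{(j)}$ and the all-ones vector $\mathbf{1}_{V_i}$, which by genericity of $p$ restrict to a spanning set of $K_1(K_S, p|_S)$. This last claim splits into the case $|S| \geq d+2$, where $K_S$ is \globrig{d} and one uses \cref{theorem:stresskernel2}, and the case $|S| \leq d+1$, where $K_S$ is $\mathcal{R}_d$-independent and hence $K_1(K_S, p|_S) = \CC^S$ by \cref{lemma:largenullity}(a). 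A dimension count then yields the formula.

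For the $\mathcal{R}_d$-linkedness part of the definition, I would apply \cref{lemma:ksum}(a) once more to $(G+uv, p)$: its stresses decompose into stresses supported on $E_1 \cup \{uv\}$ (which restrict to stresses of $(G_1+uv, p_1)$) and stresses supported on $E_2$. Since $uv \notin E_2$, a stress of $(G+uv,p)$ is nonzero on $uv$ if and only if the first summand is, and the standard characterization of $\mathcal{R}_d$-linkedness via stresses then shows that $\{u,v\}$ is $\mathcal{R}_d$-linked in $G$ if and only if it is $\mathcal{R}_d$-linked in $G_1$. The main technical hurdle is the identification $\pi_S(A_i) = K_1(K_S, p|_S)$, which requires the genericity of $p$ and the brief case split on $|S|$; the rest of the argument amounts to direct bookkeeping with \cref{lemma:ksum} and the definitions.
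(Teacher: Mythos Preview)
Your argument is correct and takes a genuinely different route from the paper's proof.

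The paper proves the ``only if'' direction \emph{constructively}: given $q_1 \in \sharedstresskernel{d}(G_1,p_1)$, it uses \cref{theorem:stresskernel2} to see that $q_1|_S$ is an affine image of $p|_S$, then applies the corresponding affine map to $(G_2,p_2)$ and glues the result to $(G_1,q_1)$ to obtain a configuration $q \in \sharedstresskernel{d}(G,p)$. From the assumption that $\{u,v\}$ is \strongly{d} in $G$ and \cref{lemma:def}(c), it then reads off that the chosen stress $\omega$ is a stress of $(G+uv,q)$, hence of $(G_1+uv,q_1)$. The $\mathcal{R}_d$-linkedness of $\{u,v\}$ in $G_1$ is argued geometrically via the same gluing of realizations.

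Your approach replaces this configuration-level extension with a clean \emph{dimension count}: you establish the additive formula $\stressnullity{G}=\stressnullity{G_1}+\stressnullity{G_2}-\stressnullity{K_S}$ via the short exact sequence, apply it to both $G$ and $G+uv$, and subtract. The same \cref{lemma:ksum}(a) and \cref{theorem:stresskernel2} (through the case split on $|S|$) appear, but packaged differently: you use them to identify $\pi_S(A_i)$ rather than to transport a single configuration. For $\mathcal{R}_d$-linkedness you again use \cref{lemma:ksum}(a), this time on $(G+uv,p)$, instead of the paper's geometric argument. Your route has the advantage of producing the gluing formula for $\stressnullity{\cdot}$ as a standalone byproduct; the paper's route is more hands-on and makes the extension $q_1 \mapsto q$ explicit. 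One minor point worth spelling out in your write-up is that the hypothesis $G_1\cap G_2=K_S$ forces $G_1,G_2$ to be induced subgraphs of $G$, so the hypotheses of \cref{lemma:ksum}(a) are indeed met, and that the case $u,v\in S$ (hence $uv\in E$) is trivial so one may assume $uv\notin E_2$.
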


\subsection{Stress-linked pairs and small separators}

In this subsection we answer a conjecture of Jordán and the author affirmatively by proving the following theorem.

\begin{theorem}\label{conjecture:gluing}\cite[Conjecture 5.6]{garamvolgyi.jordan_2023a} \emph{(Gluing theorem for globally linked pairs.)} \newline
    Let $G$ be the union of the graphs $G_1 = (V_1,E_1), G_2 = (V_2,E_2)$ with $|V_1 \cap V_2| \leq d+1$, and let $u,v \in V_1 \cap V_2$ be a pair of vertices. If $\{u,v\}$ is $\mathcal{R}_d$-linked in both $G_1$ and $G_2$, then $\{u,v\}$ is globally linked in $G$ in $\RR^d$.
\end{theorem}

\noindent We shall verify \cref{conjecture:gluing} in the following strong form.

\begin{theorem}\label{theorem:gluing} \emph{(Gluing theorem for stress-linked pairs.)}\newline
    Let $G$ be the union of the graphs $G_1 = (V_1,E_1), G_2 = (V_2,E_2)$ with $|V_1 \cap V_2| \leq d+1$, and let $u,v \in V_1 \cap V_2$ be a pair of vertices. If $\{u,v\}$ is $\mathcal{R}_d$-linked in both $G_1$ and $G_2$, then $\{u,v\}$ is \strongly{d} in $G$.
\end{theorem}
\begin{proof}
    Let us assume that $u$ and $v$ are nonadjacent in $G_1$ and $G_2$, for otherwise the statement is trivial. Fix a generic realization $(G,p)$, and let $(G_i,p_i), i \in \{1,2\}$ denote the subframeworks of $(G,p)$ corresponding to $G_1$ and $G_2$. By a slight abuse of notation, let us identify $S(G,p), S(G_1+uv,p_2)$ and $S(G_2+uv,p_2)$ with the subspace of $S(G+uv,p)$ of stresses supported on $E(G)$, $E(G_1+uv)$ and $E(G_2+uv)$, respectively. Since $\{u,v\}$ is $\mathcal{R}_d$-linked in $G_i$, there exists a stress $\omega_i$ of $(G_i+uv,p_i)$ with ${(\omega_i)}_{uv} = 1$, for $i \in \{1,2\}$. 

    Now let us fix a configuration $q \in \sharedstresskernel{d}(G,p)$, and let $(G_i,q_i), i \in \{1,2\}$ denote the corresponding subframeworks. By \cref{lemma:def}(c), to prove that $\{u,v\}$ is \strongly{d} in $G$, it suffices to show that $\omega_1$ is a stress of $(G_1 + uv,q_1)$. To this end, let us consider the stress $\omega = \omega_1 - \omega_2 \in S(G,p) \subseteq S(G,q)$. Note that $\omega$ and $\omega_1$ agree on all of $E_1 - E_2$; in particular, for every vertex $z \in V_1 - V_2$, $\omega$ and $\omega_1$ agree on every edge incident to $z$. Since $\omega \in S(G,q)$, this shows that $\omega_1$ and $(G_1+uv,q_1)$ satisfy the equilibrium conditions \cref{eq:equilibrium} at every vertex in $V_1 - V_2$. Since $|V_1 \cap V_2| \leq d+1$ and $\omega_1$ is a stress of the generic framework $(G_1+uv,p_1)$, \cref{lemma:stressalmosteveryvertex} now implies that $\omega$ is a stress of $(G_1+uv,q_1)$, as desired.
\end{proof}

\cref{conjecture:gluing} follows immediately from \cref{theorem:gluing} and \cref{theorem:nongloballylinked}.
Let us note that the bound $d+1$ is best possible in both theorems. Indeed, if $d=1$ and $G=G_1=G_2$ is a path on three vertices with endpoints $u,v$, then $\{u,v\}$ is $\mathcal{R}_1$-linked in both $G_1$ and $G_2$, but not \strongly{1} (nor globally linked) in $G$.

We may also ask the converse question: supposing that for some vertices $u,v\in V(G_1)$ the pair $\{u,v\}$ is \strongly{d} in $G_1 \cup G_2$, when can we deduce that it is also \strongly{d} in $G_1$? \cref{lemma:completeseparator} shows that this is the case when $G_1 \cap G_2$ is a complete graph. Below we consider the case when $V_1 \cap V_2$ has size two, and show that if the vertex pair in the intersection is not $\mathcal{R}_d$-linked in at least one of $G_1$ and $G_2$, then $\{u,v\}$ is also \strongly{d} in $G_1$.

It is interesting to consider the analogous question for globally linked pairs. If $V_1 \cap V_2 = \{x,y\}$ and $\{x,y\}$ is not $\mathcal{R}_d$-linked in $G_2$, then it can be shown that $\{u,v\}$ is globally linked in $G_1$ in $\CC^d$ if and only if it is globally linked in $G_1 \cup G_2$ in $\CC^d$. If we replace $\CC^d$ by $\RR^d$, then it is unclear whether the statement remains true; a special case of this question was posed as a conjecture in~\cite[Conjecture 5.7]{garamvolgyi.jordan_2023a}. The case when $\{x,y\}$ is not $\mathcal{R}_d$-linked in $G_1$ seems unclear even in the complex case.

\begin{theorem}\label{theorem:2separatorconverse} \emph{(Stress-linked pairs and 2-separators.)} \newline
    Let $G$ the union of the induced subgraphs $G_1 = (V_1,E_1)$ and $G_2 = (V_2,E_2)$ with $V_1 \cap V_2 = \{x,y\}$, and let $u,v \in V_1$ be a pair of vertices. Suppose that at least one of the following holds:
    \begin{enumerate}
        \item $\{x,y\}$ is not $\mathcal{R}_d$-linked in $G_2$, or
        \item $\{u,v\} \neq \{x,y\}$ and $\{x,y\}$ is not $\mathcal{R}_d$-linked in $G_1$.
    \end{enumerate}
    Then the pair $\{u,v\}$ is \strongly{d} in $G$ if and only if $\{u,v\}$ is \strongly{d} in $G_1$. 
\end{theorem}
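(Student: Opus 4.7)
The reverse implication is immediate from \cref{lemma:stronglylinkedsubgraph}, so I focus on the forward direction: assume $\{u,v\}$ is \strongly{d} in $G$ and aim to show it is \strongly{d} in $G_1$. In either case (a) or (b), the hypothesis that $\{x,y\}$ is $\mathcal{R}_d$-unlinked in at least one of $G_1, G_2$ lets me apply the appropriate part of \cref{lemma:ksum} to obtain $S(G,p) = S_1 \oplus S_2$, where $S_i$ is the subspace of stresses of $(G,p)$ supported on $E_i$. Identifying $S_i$ with $S(G_i, p_i)$ yields $r_d(G) = r_d(G_1) + r_d(G_2)$; a parallel computation for the $1$-dimensional shared stress kernel, using $K_1(G,p) = \{q \in \CC^V : q|_{V_i} \in K_1(G_i,p_i) \text{ for } i=1,2\}$ together with the generic surjectivity of the restriction $K_1(G_i,p_i) \to \CC^{\{x,y\}}$, gives $\stressnullity{G} = \stressnullity{G_1} + \stressnullity{G_2} - 2$.

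Assume first that $\{x,y\}$ is also $\mathcal{R}_d$-unlinked in at least one of $G_1+uv$ and $G_2$ (this always holds in case (a), and is one possibility in (b)). Then \cref{lemma:ksum} applies to $G+uv$ as well, yielding analogous decompositions. A short rank/nullity computation—with $\stressnullity{G_2+xy} = \stressnullity{G_2}$ handled via \cref{lemma:bridgeconverse} in the sub-case $\{u,v\} = \{x,y\}$—then gives the identities $r_d(G+uv) - r_d(G) = r_d(G_1+uv) - r_d(G_1)$ and $\stressnullity{G+uv} - \stressnullity{G} = \stressnullity{G_1+uv} - \stressnullity{G_1}$, so $\{u,v\}$ is \strongly{d} in $G$ if and only if it is \strongly{d} in $G_1$.

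The remaining scenario is case (b) with $\{x,y\}$ being $\mathcal{R}_d$-linked in both $G_1+uv$ and $G_2$. Then there are $\mathcal{R}_d$-circuits $C_1 \subseteq G_1+uv+xy$ and $C_2 \subseteq G_2+xy$, each containing $xy$, and $C_1$ must also contain $uv$—otherwise it would witness $\{x,y\}$ as being linked in $G_1$. Comparing $r_d(G_1+uv+xy)$ along the two paths $G_1 \to G_1+uv \to G_1+uv+xy$ and $G_1 \to G_1+xy \to G_1+xy+uv$ forces $\{u,v\}$ to be \emph{not} $\mathcal{R}_d$-linked in $G_1$, so $\{u,v\}$ is not \strongly{d} in $G_1$. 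To finish, I must show \strongly{d} fails in $G$ as well. Letting $\omega_i'$ denote the stress supported on $C_i$ with $\omega_i'(xy) = 1$, the vector $\omega^* := \omega_1' - \omega_2'$ lies in $S(G+uv, p)$ with $\omega^*(uv) \neq 0$, so by \cref{lemma:def}(c) it suffices to exhibit $q \in \sharedstresskernel{d}(G,p)$ for which $\omega^*$ is not a stress of $(G+uv, q)$.

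My plan for this last step is to take $q|_{V_2} = p|_{V_2}$ and $q|_{V_1} = p|_{V_1} + \delta$ with $\delta \in \sharedstresskernel{d}(G_1,p_1)$ supported on $V_1 \setminus \{x,y\}$. The space of such $\delta$ has dimension $d(\stressnullity{G_1} - 2)$, which is positive since $\stressnullity{G_1} > 2$ (because $G_1$ is not \globrig{d}, owing to $\{x,y\}$ being unlinked in $G_1$). The $\omega^*$-equilibrium at vertices in $V_2 \setminus \{x,y\}$ then holds automatically, while the remaining equilibria linearize in $\delta$ and, at the vertices $u$ and $v$, involve the contribution $\omega_1'(uv)(\delta(v) - \delta(u))$. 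Since $\{u,v\}$ is not $\mathcal{R}_d$-linked in $G_1$, every stress in $S(G_1,p_1)$ vanishes on $uv$, so the defining equations of $\sharedstresskernel{d}(G_1,p_1)$ cannot constrain this $uv$-contribution; a linear-algebra argument should then produce a $\delta$ violating the equilibrium at $u$ (or $v$) while still lying in $\sharedstresskernel{d}(G_1,p_1)$. Establishing the existence of this $\delta$ is the main technical obstacle of the proof.
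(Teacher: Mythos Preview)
Your approach differs substantially from the paper's. The paper uses a direct geometric gluing argument: assuming $\{u,v\}$ is \strongly{d} in $G$, it first argues (via \cref{lemma:circuitseparatingpair,lemma:mconnected2sum}) that any $\mathcal{R}_d$-circuit of $G+uv$ through $uv$ must lie entirely in $G_1+uv$, so $\{u,v\}$ is $\mathcal{R}_d$-linked in $G_1$. Then, given a generic $q_1\in\sharedstresskernel{d}(G_1,p_1)$, it attaches a similarity image of $(G_2,p_2)$ to $(G_1,q_1)$ along $\{x,y\}$ to obtain $q\in\sharedstresskernel{d}(G,p)$, and transfers a stress $\omega$ supported on that circuit from $(G+uv,q)$ back to $(G_1+uv,q_1)$. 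You instead pursue a dimension count based on the formula $\stressnullity{G}=\stressnullity{G_1}+\stressnullity{G_2}-2$; this identity is correct under the direct-sum stress decomposition, and your first sub-case goes through cleanly.

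The genuine gap is in your ``remaining scenario''. You correctly deduce there that $\{u,v\}$ is not $\mathcal{R}_d$-linked in $G_1$, but the construction of a witness $\delta$ is only a plan, and the heuristic you offer is not sound. The sentence ``the defining equations of $\sharedstresskernel{d}(G_1,p_1)$ cannot constrain this $uv$-contribution'' does not hold water: those equations constrain $\delta(u)$ and $\delta(v)$ through every edge of $G_1$ incident to $u$ and $v$, and you give no argument that the linear functional coming from the $\omega_1'$-equilibrium at $u$ is independent of them on the subspace $\{\delta(x)=\delta(y)=0\}$. What you would actually have to prove is that
\[
\sharedstresskernel{d}(G_1,p_1)\cap\{\delta:\delta(x)=\delta(y)=0\}\ \not\subseteq\ \stresskernel{d}{G_1+uv+xy}(\omega_1'),
\]
and the mere fact that $uv\notin E(G_1)$ is far from sufficient for this. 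You also silently assume $u,v\notin\{x,y\}$; when one of $u,v$ coincides with $x$ or $y$, the equilibrium at that vertex mixes contributions from both sides and your setup requires modification. The paper's route sidesteps this entire case split: once the circuit is localised to $G_1+uv$, the gluing argument treats cases \textit{(a)} and \textit{(b)} uniformly, with no need to produce an explicit obstruction.
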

\begin{proof}
    First, note that under our assumption, there is no $\mathcal{R}_d$-circuit in $G$ that intersects both $V_1 - V_2$ and $V_2 - V_1$. Indeed, if there existed such an $\mathcal{R}_d$-circuit $C$, then by \cref{lemma:circuitseparatingpair}, we would have $\{x,y\} \subseteq V(C)$. But then \cref{lemma:mconnected2sum} implies that $\{x,y\}$ is contained in an $\mathcal{R}_d$-circuit of $G_i+xy$, so the pair is $\mathcal{R}_d$-linked in $G_i$, for both $i \in \{1,2\}$, contradicting our assumption. 
    
    If $u$ and $v$ are adjacent in $G$, then the statement of the theorem is trivial, so let us assume that $u$ and $v$ are nonadjacent. If $\{u,v\}$ is \strongly{d} in $G_1$, then it is also \strongly{d} in $G$ by \cref{lemma:stronglylinkedsubgraph}. Let us thus assume that $\{u,v\}$ is \strongly{d} in $G$. In particular, $\{u,v\}$ is $\mathcal{R}_d$-linked in $G$, so there is an $\mathcal{R}_d$-circuit $C$ in $G+uv$ that contains $uv$. By the discussion in the first paragraph, we must have $V(C) \subseteq V(G_1)$. (Otherwise $V(C) \subseteq V(G_2)$, which would mean that $\{u,v\} = \{x,y\}$ and $\{x,y\}$ is $\mathcal{R}_d$-linked in $G_2$, contradicting our assumption.) It follows that $\{u,v\}$ is $\mathcal{R}_d$-linked in $G_1$.
    
    Let $(G,p)$ be a generic realization in $\CC^d$, and let $(G_i,p_i)$ be the subframework corresponding to $G_i$, for $i\in \{1,2\}$. By \cref{lemma:ksum}(c) we have $S(G,p) = S(G_1,p_1) \oplus S(G_2,p_2)$, where by a slight abuse of notation, we identify $S(G_i,p_i)$ with the stresses of $(G,p)$ supported on $E(G_i)$, for $i \in \{1,2\}$. Let $\omega$ be a stress of $(G_1+uv,p_1)$ that is supported on the edges of $C$. By \cref{lemma:def}(d), it suffices to show that $\omega$ is also a stress of $(G_1+uv,q_1)$ for every generic configuration $q_1 \in \sharedstresskernel{d}(G_1,p_1)$.

    Let us fix such a configuration $q_1$. By genericity, we have $S(G_1,p_1) = S(G_1,q_1)$. Let $(G,q)$ be the realization obtained from $(G_1,q_1)$ by gluing a suitably scaled and rotated copy of $(G_2,p_2)$ along $\{u,v\}$. Then by \cref{lemma:ksum}(c) again, we have \[S(G,q) = S(G_1,q_1) \oplus S(G_2,p_2) = S(G_1,p_1) \oplus S(G_2,p_2) = S(G,p),\] and hence $q \in \sharedstresskernel{d}(G,p)$. Since $\{u,v\}$ is \strongly{d} in $G$, it follows that $\omega$ is a stress of $(G+uv,q)$. But since $\omega$ is supported on the edges of $C$ and $E(C) \subseteq E(G_1)$, this implies that $\omega$ is also a stress of $(G_1+uv,q_1)$, as desired.
\end{proof}

\subsection{Stress-linked pairs in low dimensions}

In this subsection we give a combinatorial characterization of \strongly{d} vertex pairs in the $d=1$ and $d=2$ cases. Both characterizations involve the notion of \emph{local connectivity}.
Given a graph $G$ and a pair of nonadjacent vertices $u,v \in V(G)$, let $\kappa_G(u,v)$ denote the maximum number of pairwise internally disjoint $u,v$-paths in $G$. By Menger's theorem, this is the same as the minimum size of a vertex set $S$ such that $u$ and $v$ are disconnected in $G-S$.

The following (easy) theorem characterizes \strongly{1} pairs. %

\begin{theorem}\label{theorem:1dimchar} \emph{(Combinatorial characterization of \strongly{1} pairs.)}\newline
    Let $G$ be a graph and let $u,v \in V(G)$ be a pair of nonadjacent vertices. The following are equivalent.
    \begin{enumerate}
        \item $\{u,v\}$ is contained in a \globrig{1} subgraph of $G$.
        \item $\{u,v\}$ is \strongly{1} in $G$.
        \item $\{u,v\}$ is globally linked in $G$ in $\CC^1$.
        \item $\{u,v\}$ is globally linked in $G$ in $\RR^1$.
        \item $\kappa_G(u,v) \geq 2$.
    \end{enumerate}
\end{theorem}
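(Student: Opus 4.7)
The plan is to establish the chain of implications $(e) \Rightarrow (a) \Rightarrow (b) \Rightarrow (c) \Rightarrow (d) \Rightarrow (e)$, at which point we are done. Most implications follow by directly combining results already stated in the paper; only the first and last require a short graph-theoretic argument.

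For $(e) \Rightarrow (a)$, I would invoke Menger's theorem: the condition $\kappa_G(u,v) \geq 2$ yields two internally disjoint $u$-$v$ paths, whose union is a 2-connected subgraph $G_0$ of $G$ containing both $u$ and $v$. Since, as noted in the introduction, a graph is \globrig{1} if and only if it is 2-connected, $G_0$ is the desired globally 1-rigid subgraph.

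For $(a) \Rightarrow (b)$, suppose $G_0 \subseteq G$ is \globrig{1} and contains $\{u,v\}$. Then by \cref{lemma:globallyrigidstronglylinked}, $\{u,v\}$ is \strongly{1} in $G_0$, and by \cref{lemma:stronglylinkedsubgraph} it remains \strongly{1} in $G$. The implication $(b) \Rightarrow (c)$ is exactly \cref{theorem:nongloballylinked} specialized to $d=1$. The implication $(c) \Rightarrow (d)$ is immediate from the definitions: any two equivalent real realizations are also equivalent as complex realizations, so global linkedness in $\CC^1$ implies global linkedness in $\RR^1$.

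It remains to prove $(d) \Rightarrow (e)$, which I would do by contrapositive. Assume $\kappa_G(u,v) \leq 1$. If $u$ and $v$ lie in distinct connected components, then for any generic $(G,p)$ in $\RR^1$ we can translate the component containing $v$ by an arbitrary nonzero real number to obtain an equivalent realization $(G,q)$ with $|p(u)-p(v)| \neq |q(u)-q(v)|$, showing that $\{u,v\}$ is not globally linked in $\RR^1$. Otherwise $\kappa_G(u,v) = 1$, so there is a cut vertex $w$ separating $u$ from $v$ in $G$; write $G = G_1 \cup G_2$ with $V(G_1) \cap V(G_2) = \{w\}$, $u \in V(G_1)$, $v \in V(G_2)$. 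Given a generic realization $(G,p)$ in $\RR^1$, let $(G,q)$ agree with $(G,p)$ on $V(G_1)$ and be the reflection of $(G,p)$ about $p(w)$ on $V(G_2) \setminus \{w\}$; that is, set $q(x) = 2p(w) - p(x)$ for $x \in V(G_2) \setminus \{w\}$. Reflection about a point preserves all one-dimensional distances, so $(G,p)$ and $(G,q)$ are equivalent; but $|p(u)-p(v)| = |p(u)-p(v)|$ while $|q(u)-q(v)| = |p(u) - (2p(w)-p(v))| = |p(u)+p(v)-2p(w)|$, and these are distinct for generic $p$. Hence $\{u,v\}$ is not globally linked in $G$ in $\RR^1$, completing the contrapositive.

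There is no real obstacle here; the main point is simply to recognize that the $d=1$ case collapses to the block structure of $G$, and that the nontrivial direction $(b) \Rightarrow (c)$ is supplied by \cref{theorem:nongloballylinked}, while $(a) \Rightarrow (b)$ is supplied by \cref{lemma:globallyrigidstronglylinked,lemma:stronglylinkedsubgraph}.
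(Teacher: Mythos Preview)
Your proof is correct and follows essentially the same cycle of implications as the paper: $(a)\Rightarrow(b)$ via \cref{lemma:globallyrigidstronglylinked,lemma:stronglylinkedsubgraph}, $(b)\Rightarrow(c)$ via \cref{theorem:nongloballylinked}, $(c)\Rightarrow(d)$ immediate, and $(e)\Rightarrow(a)$ via the two-paths argument. The only difference is that the paper dismisses $(d)\Rightarrow(e)$ as ``immediate'' while you spell out the cut-vertex reflection argument explicitly; this is a matter of detail, not of approach.
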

\begin{proof}
    The first implication follows from \cref{lemma:globallyrigidstronglylinked,lemma:stronglylinkedsubgraph}, and the second from \cref{theorem:nongloballylinked}. The third and four implications are immediate. Finally, if $\kappa_G(u,v) \geq 2$, then there are two internally disjoint $u,v$-paths $P_1$ and $P_2$ in $G$. Now $P_1 \cup P_2$ is a $2$-connected (and hence \globrig{1}) subgraph that contains $\{u,v\}$.
\end{proof}

\cref{theorem:2dimchar} below characterizes \strongly{2} vertex pairs. As we noted in the introduction, Jackson, Jordán and Szabadka conjectured that the same condition characterizes globally linked pairs in $\RR^2$~\cite[Conjecture 5.9]{jackson.etal_2006}, and Jackson and Owen made the analogous conjecture for globally linked pairs in $\CC^2$~\cite[Conjecture 5.4]{jackson.owen_2019}.

\begin{theorem}\label{theorem:2dimchar} \emph{(Combinatorial characterization of \strongly{2} pairs.)}\newline
    Let $G = (V,E)$ be a graph and let $u,v \in V$ be a pair of vertices. The pair $\{u,v\}$ is \strongly{2} if and only if either $uv \in E$, or there is some $\mathcal{R}_2$-connected subgraph $H$ of $G$ with $u,v \in V(H)$ and $\kappa_H(u,v) \geq 3$. 
\end{theorem}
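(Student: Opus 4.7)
The plan is to prove the two directions separately. The \emph{if} direction follows by induction combining the gluing theorem with the global rigidity characterization in two dimensions, and the \emph{only if} direction uses the $\mathcal{R}_2$-circuit structure together with a matroidal argument to find the required witness subgraph.

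\textbf{If direction.} Assume $uv \notin E(G)$ (the case $uv \in E(G)$ is trivial) and fix an $\mathcal{R}_2$-connected subgraph $H \subseteq G$ with $u,v \in V(H)$ and $\kappa_H(u,v) \geq 3$. By \cref{lemma:stronglylinkedsubgraph} it suffices to show $\{u,v\}$ is \strongly{2} in $H$; I would proceed by induction on $|V(H)|$. In the base case $H$ is also $3$-connected, so by \cref{theorem:2dimglobrigidchar} $H$ is \globrig{2} and \cref{lemma:globallyrigidstronglylinked} concludes. In the inductive step $H$ has a separating vertex pair $\{x,y\}$; since $\kappa_H(u,v) \geq 3$ we have $\{x,y\} \neq \{u,v\}$ and $u, v$ lie on the same side, say in $V(H_1)$, with $V(H_1) \cap V(H_2) = \{x,y\}$, $xy \in E(H_1) \cap E(H_2)$, and $H$ equal to $H_1 \cup H_2$ or $H_1 \cup H_2 - xy$ according to whether $xy \in E(H)$. \cref{lemma:mconnected2sum} (or a direct check in the adjacent case) ensures both $H_i$ are $\mathcal{R}_2$-connected.

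A short path-counting argument shows $\kappa_{H_1}(u,v) \geq 3$: among three internally disjoint $u,v$-paths in $H$, at most one can pass through $V(H_2) \setminus \{x,y\}$ (else two such paths would share $x$ or $y$ as an internal vertex), and any such traversing path can be replaced by the edge $xy \in E(H_1)$. By the inductive hypothesis $\{u,v\}$ is \strongly{2} in $H_1$. When $xy \in E(H)$, $H_1 \subseteq H$ and \cref{lemma:stronglylinkedsubgraph} immediately finishes the proof. When $xy \notin E(H)$, that lemma gives $\{u,v\}$ \strongly{2} in $H + xy = H_1 \cup H_2$; independently, \cref{theorem:gluing} applied to $H_1 - xy$ and $H_2 - xy$, whose intersection $\{x,y\}$ is $\mathcal{R}_2$-linked in each side since $xy$ is not an $\mathcal{R}_2$-bridge of the $\mathcal{R}_2$-connected $H_i$, yields $\{x,y\}$ \strongly{2} in $H$. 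Chaining the resulting identities gives
\[\stressnullity{H} = \stressnullity{H + xy} = \stressnullity{H + xy + uv} \leq \stressnullity{H + uv} \leq \stressnullity{H},\]
forcing $\stressnullity{H + uv} = \stressnullity{H}$, and an analogous rank chain forces $r_2(H+uv) = r_2(H)$, so $\{u,v\}$ is \strongly{2} in $H$.

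\textbf{Only if direction.} Assume $\{u,v\}$ is \strongly{2} in $G$ and $uv \notin E(G)$. Since \strongly{2} pairs are $\mathcal{R}_2$-linked, there is an $\mathcal{R}_2$-circuit $C$ in $G+uv$ containing $uv$, and \cref{lemma:circuitseparatingpair} gives $\kappa_C(u,v) \geq 3$. Since $C - uv$ need not itself be $\mathcal{R}_2$-connected, I would take $H$ to be the $\mathcal{R}_2$-connected component of $G + uv$ containing $uv$ and show that $H^- := H - uv \subseteq G$ is $\mathcal{R}_2$-connected with $\kappa_{H^-}(u,v) \geq 3$. The required local connectivity is inherited from $\kappa_{H}(u,v) \geq \kappa_C(u,v) \geq 3$ together with the fact that three internally disjoint $u,v$-paths in $H$ can be chosen to avoid $uv$, since $H$ is $\mathcal{R}_2$-connected and hence contains multiple circuits through $uv$. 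The $\mathcal{R}_2$-connectedness of $H^-$ would be obtained by a contradiction argument: any $2$-separation of $H^-$ separating $u$ from $v$ would, via \cref{lemma:ksum} and the shared stress kernel formulation in \cref{lemma:def}(c), produce a configuration in $\sharedstresskernel{2}(G,p)$ failing to satisfy some stress of $(G+uv,p)$ nonzero on $uv$, contradicting \strongly{2}.

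The main obstacle is the \emph{only if} direction, specifically the contradiction argument establishing $\mathcal{R}_2$-connectedness of $H^-$ from the stress-linkedness hypothesis; the \emph{if} direction proceeds smoothly once the inductive framework and the gluing machinery are combined.
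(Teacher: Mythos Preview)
Your \emph{if} direction is correct and follows essentially the same inductive strategy as the paper: reduce to $H$ via \cref{lemma:stronglylinkedsubgraph}, handle the \globrig{2} base case via \cref{lemma:globallyrigidstronglylinked}, and otherwise split along a separating pair $\{x,y\}$, using \cref{theorem:gluing} to certify that $\{x,y\}$ is \strongly{2} in $H$ and then inducting on the side $H_1$ containing $u,v$. The paper organises the cases slightly differently (it treats the situation ``$\{x,y\}=\{u,v\}$'' as an auxiliary step whose argument is then reused for arbitrary separating pairs), but the content is the same.

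Your \emph{only if} direction, however, has a genuine gap, which you yourself flag. Taking $H$ to be the $\mathcal{R}_2$-component of $G+uv$ through $uv$ and setting $H^- = H - uv$ is a natural candidate, but your sketched contradiction argument does not establish that $H^-$ is $\mathcal{R}_2$-connected. You only address graph $2$-separations of $H^-$ that separate $u$ from $v$; this speaks to vertex connectivity, not to connectedness of the matroid $\mathcal{R}_2(H^-)$, and nothing rules out a nontrivial matroid separation of $\mathcal{R}_2(H^-)$ that does not arise from a small vertex cut. Even the claim $\kappa_{H^-}(u,v)\geq 3$ is circular as written: saying that three internally disjoint $u,v$-paths in $H$ ``can be chosen to avoid $uv$'' is precisely the statement to be proved, and the existence of several circuits through $uv$ does not by itself supply three such paths.

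The paper takes a different route for necessity: induction on $|E(G)|$, repeatedly simplifying $G$ while preserving the \strongly{2} hypothesis. The key tools absent from your sketch are \cref{theorem:bridgegloballylinked} (delete an $\mathcal{R}_2$-bridge and induct), \cref{lemma:completeseparator} (pass through a clique separator), and \cref{theorem:2separatorconverse} (pass through a $2$-separator $\{x,y\}$ when $\{x,y\}$ is not $\mathcal{R}_2$-linked on one side). Once none of these reductions apply, $G$ has a nonadjacent separating pair $\{x,y\}$ that is $\mathcal{R}_2$-linked on both sides, and the witness is assembled explicitly from circuits on either side via \cref{lemma:mconnected2sum}. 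It is this inductive reduction, rather than a direct construction from the $\mathcal{R}_2$-component of $G+uv$, that makes the argument go through.
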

\begin{proof}
    We first prove sufficiency by induction on $|V|$. Our proof is essentially the same as that of~\cite[Theorem 5.7]{jackson.etal_2006} (which is the analogous statement for globally linked pairs in $\RR^2$). The cases when $|V| = 2$ or $uv \in E$ are trivial, so let us suppose that $|V| \geq 3$ and that $u$ and $v$ are nonadjacent in $G$. We need to show that $\{u,v\}$ is \strongly{2} in $G$. By \cref{lemma:stronglylinkedsubgraph}, it suffices to show that $\{u,v\}$ is \strongly{2} in $H$, so we may suppose that $G = H$. If $G$ is \globrig{2}, then we are done by \cref{lemma:globallyrigidstronglylinked}. Hence let us assume that this is not the case; since $G$ is $\mathcal{R}_2$-connected, \cref{theorem:2dimglobrigidchar}(b) now implies that $G$ is not $3$-connected. On the other hand, it can be deduced from \cref{lemma:circuitseparatingpair} that $G$ is $2$-connected. It follows that there is a separating vertex pair $\{x,y\}$ in $G$. We now have two cases: either $\{x,y\} = \{u,v\}$, or $\{x,y\} \neq \{u,v\}$.
    
    Let us first consider the case when $\{x,y\} = \{u,v\}$. Let $uw_1$ and $vw_2$ be edges of $G$ such that $w_1$ and $w_2$ are in different components of $G-\{u,v\}$. Since $G$ is $\mathcal{R}_2$-connected, there is an $\mathcal{R}_2$-circuit $C$ in $G$ that contains both $uw_1$ and $vw_2$. In particular, $\{u,v\}$ is a separating vertex pair of $C$. Let $C_1$ and $C_2$ be obtained by a $2$-separation of $C$ along $\{u,v\}$.  \cref{lemma:mconnected2sum} implies that $C_i$ is an $\mathcal{R}_2$-circuit, and thus $\{u,v\}$ is $\mathcal{R}_2$-linked in $C_i-uv$, for $i \in \{1,2\}$. It follows from \cref{theorem:gluing} that $\{u,v\}$ is \strongly{2} in $C$, and consequently in $G$ as well by \cref{lemma:stronglylinkedsubgraph}.
    
    Now let us consider the case when $\{x,y\} \neq \{u,v\}$. By the previous reasoning, we still get that $\{x,y\}$ is \strongly{2} in $G$, so we may assume that $xy \in E$. Let $G_1$ and $G_2$ be obtained by a $2$-separation of $G-xy$ along $\{x,y\}$, with $\{u,v\} \subseteq V(G_1)$. \cref{lemma:mconnected2sum} implies that $G_1$ is also $\mathcal{R}_2$-connected, and since $xy \in E(G_1)$ we also have $\kappa_{G_1}(u,v) \geq 3$. Thus by induction, $\{u,v\}$ is \strongly{2} in $G_1$, and hence by \cref{lemma:stronglylinkedsubgraph}, it is also \strongly{2} in $G$.
    
    Next, we prove necessity by induction on $|E|$. The case when $|E| = 1$ or $uv \in E$ is immediate, and by \cref{theorem:2dimglobrigidchar}, so is the case when $G$ is \globrig{2}. If there is an $\mathcal{R}_2$-bridge $e$ in $G$, then $\{u,v\}$ is also \strongly{2} in $G-e$ by \cref{theorem:bridgegloballylinked}, and thus we are done by induction. Hence we may assume that $G$ is not \globrig{2} and that it does not contain $\mathcal{R}_2$-bridges. From \cref{theorem:2dimglobrigidchar}(c) it follows that $G$ is not $3$-connected. 
    
    Note that since $\{u,v\}$ is \strongly{2}, it is also globally linked in $\RR^2$ by \cref{theorem:nongloballylinked}, which implies that $\kappa_G(u,v) \geq 3$.
    If $G$ is not $2$-connected or $G$ has an adjacent separating pair of vertices, then we can use \cref{lemma:completeseparator} and induction. Thus we may assume that there is a separating pair of vertices $\{x,y\}$ in $G$ such that $xy \notin E$. Let $G_1$ and $G_2$ be induced subgraphs of $G$ such that $G = G_1 \cup G_2$, $V(G_1) \cap V(G_2) = \{x,y\}$ and $u,v \in V(G_1)$.
    
    If $\{x,y\}$ is not $\mathcal{R}_d$-linked in some of $G_1$ or $G_2$, then by \cref{theorem:2separatorconverse}, $\{u,v\}$ is \strongly{2} in either $G_1$ or $G_2$, so we are done by induction. Hence we may assume that $\{x,y\}$ is $\mathcal{R}_d$-linked in both $G_1$ and $G_2$. If $\{u,v\} = \{x,y\}$, then we can take an $\mathcal{R}_2$-circuit $C_i$ in $G_i+uv$ containing $uv$ such that $V(C_i) \subseteq V(G_i)$, for each $i \in \{1,2\}$. By \cref{lemma:mconnected2sum}, $C_1 \cup C_2 - uv$ is an $\mathcal{R}_2$-circuit with $\kappa_C(u,v) \geq 3$.\footnote{The observation that $\kappa_C(u,v) \geq 3$ follows from the well-known fact that $\mathcal{R}_2$-circuits are redundantly \rig{2}, and in particular redundantly $2$-connected. This implies that $\kappa_{C_i - uv}(u,v) \geq 2$, and hence $\kappa_C(u,v) \geq 4$.}
    
    Finally, suppose that $\{u,v\} \neq \{x,y\}$. Since $\{u,v\}$ is also \strongly{2} in $G+xy$, it is \strongly{2} in $G_1+xy$ by \cref{lemma:completeseparator}. By induction, there is an $\mathcal{R}_2$-connected subgraph $H$ of $G_1 + xy$ with $u,v \in V(H)$ and $\kappa_H(u,v) \geq 3$. If $xy \notin E(H)$, then $H$ is also a subgraph of $G$, and we are done. Otherwise, we can take an $\mathcal{R}_2$-circuit $C_2$ of $G_2+xy$ with $xy \in E(C_2)$ and consider the graph $H' = (H \cup C_2) - xy$. By \cref{lemma:mconnected2sum}, $H'$ is also $\mathcal{R}_2$-connected, and clearly $\kappa_{H'}(u,v) \geq 3$ also holds.
\end{proof}

\subsection{Stress-linked pairs and coning}

Given a graph $G = (V,E)$, the \emph{cone graph} of $G$ (with cone vertex $c$) is the graph $\coneG$ obtained from $G$ by adding a new vertex $c$ and new edges $cv, v \in V$. Coning is a well-studied operation in combinatorial rigidity theory that often provides a transfer of rigidity properties from dimension $d$ to dimension $d+1$. For example, a classic result of Whiteley \cite{whiteley_cones} states that $G$ is $d$-rigid if and only if $\coneG$ is $(d+1)$-rigid. Similarly, by a result of Connelly and Whiteley \cite{connelly.whiteley_2010}, a graph is globally $d$-rigid if and only if $\coneG$ is globally $(d+1)$-rigid. In this subsection we prove the following refinement of the latter result.

\begin{theorem}\label{theorem:stresslinkedconing} \emph{(Coning and stress-linked pairs.)} \newline
    Let $\coneG$ be the cone of the graph $G = (V,E)$, and let $u,v \in V$ be a pair of vertices. The pair $\{u,v\}$ is \strongly{d} in $G$ if and only if it is \strongly{(d+1)} in $\coneG$.
\end{theorem}

To prove \cref{theorem:stresslinkedconing}, we need to understand the shared stress nullity $\stressnullity{G}$ and the property of being $\mathcal{R}_d$-linked behave under coning. For the latter, we have the following folklore result.

\begin{lemma}\label{lemma:linkedconing} \emph{(Coning and linked pairs.)} \newline
    Let $\coneG$ be the cone of the graph $G = (V,E)$, and let $u,v \in V$ be a pair of nonadjacent vertices. The pair $\{u,v\}$ is $\mathcal{R}_d$-linked in $G$ if and only if it is $\mathcal{R}_{d+1}$-linked in $\coneG$. 
\end{lemma}
\begin{proof}
    This is immediate from the fact that $r_d(G') = r_d(G) + |V|$ and $r_d(G' + uv) = r_d(G+uv) + |V|$, which are consequences of Whiteley's coning theorem~\cite{whiteley_cones} (e.g., see~\cite[Theorem 5.2]{garamvolgyi.etal_2022}).
\end{proof}

To understand how coning affects spaces of stresses, we first investigate the operation of sliding.
Let $\coneG$ be the cone of the graph $G = (V,E)$ with cone vertex $c$. Given a configuration $p \in (\CC^d)^{V(G')}$ and a vector of nonzero scalars $\lambda = {(\lambda_v)}_{v \in V} \in \CC^V$, we define the configuration $p_\lambda \in (\CC^d)^{V(G')}$ by letting $p_\lambda(c) = p(c)$ and $p_\lambda(v) = p(c) + \lambda_v\big(p(v) - p(c)\big)$ for each $v \in V$. We say that $p_\lambda$ is obtained from $p$ by \emph{sliding} (according to $\lambda$). The following folkore result shows that stress spaces behave well under this operation.

\begin{lemma}\label{lemma:sliding} \emph{(Stress spaces and sliding.)} \newline
    Let $\coneG$ be the cone of the graph $G = (V,E)$ with cone vertex $c$. Fix a realization $(\coneG,p)$ of $\coneG$ in $\CC^d$, let $\lambda \in \CC^V$ be a vector of nonzero scalars, and consider the realization $(\coneG,p_\lambda)$ obtained from $(\coneG,p)$ by sliding according to $\lambda$.
    \begin{enumerate}
        \item There is a linear isomorphism $\Phi_\lambda : \CC^{E(\coneG)} \to \CC^{E(\coneG)}$, depending only on $\lambda$, such that $S(\coneG,p_\lambda) = \Phi_\lambda(S(\coneG,p))$. Moreover, we have $\Phi_\lambda \circ \Phi_{\mu} = \text{id}$, where $\mu = {(1/\lambda_v)}_{v \in V}  \in \CC^V$. 
        \item We have $K_1(\coneG,p_\lambda) = \{q_\lambda : q \in K_1(\coneG,p)\}$. In particular, $K_1(\coneG,p)$ and $K_1(\coneG,p_\lambda)$ are linearly isomorphic.
    \end{enumerate}
\end{lemma}
\begin{proof}
    \emph{(a)} This is implicit in~\cite[Lemma 4.9]{connelly.etal_2017}. Explicitly, $\Phi_\lambda$ is defined by 
    \[{\Phi_\lambda(\omega)}_{uv} = 
        \begin{cases}
        \dfrac{\omega_{uv}}{\lambda_u \lambda_v} &\text{if } u,v \in V, \\[10pt] \dfrac{\omega_{cv}}{\lambda_v^2} + \dfrac{1}{\lambda_v}{\displaystyle \sum_{vx \in E}}\omega_{vx}\left(\dfrac{1}{\lambda_v} - \dfrac{1}{\lambda_x}\right) &\text{if } v \in V \text{ and } u = c. 
        \end{cases}
    \]
    The claimed properties of $\Phi_\lambda$ can be checked by direct calculation. For a detailed exposition, see~\cite{martinwinter}.

    \emph{(b)} Fix $q \in K_1(\coneG,p)$. By definition, $S(\coneG,p) \subseteq S(\coneG,q)$ and so by part \emph{(a)}, we have \[S(\coneG,p_\lambda) = \Phi_\lambda\left(S(\coneG,p)\right) \subseteq \Phi_\lambda\left(S(\coneG,q)\right) = S(\coneG,q_\lambda).\] Hence $q_\lambda \in K_1(\coneG,p_\lambda)$. Conversely, every member of $K_1(\coneG,p_\lambda)$ may be written as $q_\lambda$ for some realization $(\coneG,q)$, which implies that \[S(\coneG,p) = \Phi_\mu(S(\coneG,p_\lambda)) \subseteq \Phi_\mu(S(\coneG,q_\lambda)) = S(\coneG,q),\] so $q \in K_1(\coneG,p)$, as desired. 
    
    This shows that $K_1(\coneG,p_\lambda) = \{q_\lambda : q \in K_1(\coneG,p)\}$. It follows immediately from the definition of sliding that the spaces $K_1(\coneG,p)$ and $K_1(\coneG,p_\lambda)$ are linearly isomorphic. 
\end{proof}

\begin{proposition}\label{proposition:stressnullityconing} \emph{(Coning and the shared stress nullity.)} \newline
    Let $\coneG$ be the cone of the graph $G = (V,E)$ with cone vertex $c$. We have $\stressnullitydim{\coneG}{d+1} = \stressnullity{G}$ + 1.
\end{proposition}
\begin{proof}
    Let us fix a generic realization $(G,p_0)$ in $\CC^d$. Let $(\coneG,p)$ be the realization in $\CC^{d+1}$ obtained by setting $p(c) = 0$ and $p(v) = {(p_0(v),1)}^T \in \CC^{d+1}$ for each $v \in V$. Finally, fix a collection of nonzero scalars $\lambda = {(\lambda_v)}_{v \in V} \in \CC^V$ such that the coordinates in $\{\lambda\} \cup \{p_0(v) : v \in V\}$ form an algebraically independent set, and consider the framework $(\coneG,p_\lambda)$ obtained from $(\coneG,p)$ by sliding.

    Since $(G,p_0)$ is generic, we have $\dim(K_1(G,p_0)) = \stressnullity{G}$. It is routine to check that we can obtain a generic realization by suitably translating $(\coneG,p_\lambda)$, which implies that $\dim(K_1(\coneG,p_\lambda)) = \stressnullitydim{\coneG}{d+1}$. Hence, we only need to show that $\dim(K_1(\coneG,p_\lambda)) = \dim(K_1(G,p_0)) + 1$. 
    
    By \cref{lemma:sliding}(b), $K_1(\coneG,p_\lambda)$ is linearly isomorphic to $K_1(\coneG,p)$. Let us analyze the structure of this latter set. First, observe that \[S(\coneG,p) = \{\omega \in \CC^{E(\coneG)}: \omega|_{E} \in S(G,p_0), \, \omega_{cv} = 0 \, \, \forall v \in V\},\] that is, the stresses of $(\coneG,p)$ are precisely the stresses of $(G,p)$ with additional zeros on the cone edges. Indeed, for any stress $\omega \in S(\coneG,p)$ and any $v \in V$, we must have $\omega_{cv} = 0$, since otherwise the equilibrium conditions could not be satisfied at $v$, due to the fact that in the framework $(\coneG,p)$, the rest of the edges incident to $v$ lie in a hyperplane. 
    
    It follows from the definitions that $K_1(\coneG,p) = \{(\coneG,q) : (G,q|_V) \in K_1(G,p_0)\}$. Thus we have $\dim(K_1(\coneG,p)) = \dim(K_1(G,p_0)) + 1$, where the plus one comes from the fact that there is no constraint in $K_1(\coneG,p)$ on the position of cone vertex $v$. Hence \[\stressnullitydim{\coneG}{d+1} = \dim(K_1(\coneG,p_\lambda)) = \dim(K_1(\coneG,p)) = \dim(K_1(G,p_0))+1 = \stressnullity{G} +1,\]as claimed.
\end{proof}

We can now easily deduce the main result of this subsection.

\begin{proof}[Proof of \cref{theorem:stresslinkedconing}]
    The statement follows immediately from \cref{lemma:linkedconing}, \cref{proposition:stressnullityconing}, and the definition of $d$-stress-linked vertex pairs.
\end{proof}

As a simple application of \cref{theorem:stresslinkedconing}, we show that if a pair of vertices is \strongly{d}, then the same pair is \strongly{d'} for all $d' \leq d$. While this result is unsurprising, we note that it appears to be open whether an analogous result holds for globally linked pairs, either in $\RR^d$ or in $\CC^d$.

\begin{corollary}\label{corollary:dimensiondropping} \emph{(Dimension dropping for stress-linked pairs.)} \newline
    Let $G$ be a graph and let $u,v \in V(G)$ be a pair of vertices. If $\{u,v\}$ is \strongly{d} in $G$, then $\{u,v\}$ is \strongly{d'} in $G$ for all nonnegative integers $d' \leq d$.
\end{corollary}
\begin{proof}
    If $\{u,v\}$ is \strongly{d} in $G$, then by \cref{lemma:stronglylinkedsubgraph}, $\{u,v\}$ is also \strongly{d} in the $(d-d')$-fold cone of $G$, and hence by \cref{theorem:stresslinkedconing}, $\{u,v\}$ is \strongly{d'} in $G$, as claimed.
\end{proof}

\section{Applications}\label{section:applications}

In this section we show how our results on stress-linked vertex pairs can be applied to various problems in rigidity theory. We start by significantly strengthening the main result of~\cite{garamvolgyi.jordan_2023}.

A graph $G$ is \emph{minimally \globrig{d}} if it is \globrig{d}, but $G-e$ is not \globrig{d}, for every edge $e \in E(G)$. In~\cite{garamvolgyi.jordan_2023}, Jordán and the author proved that a minimally \globrig{d} graph on $n \geq d+2$ vertices has at most $(d+1)n - \binom{d+2}{2}$ edges, and that the inequality is strict unless $G$ is the complete graph on $d+2$ vertices. We shall now prove that the same sparsity bound holds for every subgraph of $G$ on at least $d+2$ vertices. We will need the following result about the rigidity matroid of \globrig{d} graphs. 

\begin{theorem} (\cite[Theorem 3.5]{garamvolgyi.etal_2022})\label{theorem:globallyrigidMconnected}
    If $G$ is a \globrig{d} graph on at least $d+2$ vertices, then it is $\mathcal{R}_d$-connected.
\end{theorem}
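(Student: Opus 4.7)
The plan is to argue by contradiction. Suppose $G$ is \globrig{d} on $n \geq d+2$ vertices and $\mathcal{R}_d(G)$ admits a nontrivial separation $(E_1, E_2)$; write $G_i = (V_i, E_i)$ for the edge-induced subgraph, where $V_i = V(E_i)$. The strategy is to deduce that the affine edge measurement space decomposes as $B(G,p) = B(G_1, p_1) \times B(G_2, p_2)$ for a generic realization $(G,p)$, and then to derive a contradiction from the resulting dimension count.

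I would first establish that each $G_i$ contains an $\mathcal{R}_d$-circuit. Since $G$ is \globrig{d}, every pair of vertices is \strongly{d} in $G$ by \cref{lemma:globallyrigidstronglylinked}, and iterated use of \cref{corollary:bridges} (adding missing edges until we reach the complete graph on $V(G)$, which is bridgeless for $n \geq d+2$) recovers Hendrickson's theorem: $G$ has no $\mathcal{R}_d$-bridges. Because $\mathcal{R}_d(G)|_{E_i} = \mathcal{R}_d(G_i)$ and no circuit crosses a matroid separation, every edge of $G_i$ lies inside an $\mathcal{R}_d$-circuit of $G_i$. Since the smallest $\mathcal{R}_d$-circuit is $K_{d+2}$, this gives $|V_i| \geq d+2$ and also shows that $G_i$ contains a subgraph of minimum degree at least $d+1$.

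Next I would transport the separation through the duality machinery of \cref{section:genericstressmatroid}. By \cref{lemma:algmatseparation} we get $M_{d,G} = M_{d,G_1} \times M_{d,G_2}$, and \cref{lemma:dualconnectivity} then transfers $(E_1, E_2)$ to a separation of $\stressmatroid{d}{G}$, and hence of its dual. Since $G$ is \globrig{d}, \cref{theorem:affinematroidstressmatroid} gives $\stressmatroid{d}{G}^* = \affinematroid{d}{G} = \algmat{B(G,p)}$, so a second application of \cref{lemma:algmatseparation} (together with the observation that $B(G,p)$ is a linear space, so projections need no closure) yields $B(G,p) = \pi_{E_1}(B(G,p)) \times \pi_{E_2}(B(G,p))$. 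Unfolding $B(G,p) = m_{d,G}(A(G,p))$ and using that every affine image of $p_i$ is the restriction of an affine image of $p$, a direct computation gives $\pi_{E_i}(B(G,p)) = m_{d,G_i}(A(G_i, p_i)) = B(G_i, p_i)$, so in fact $B(G,p) = B(G_1, p_1) \times B(G_2, p_2)$.

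For the conclusion, \cref{theorem:conicatinfinity} applied to the circuit subgraph inside each $G_i$ shows that the edge directions of $(G_i, p_i)$ do not lie on a conic at infinity, so \cref{lemma:affineedgedim} gives $\dim B(G_i, p_i) = \binom{d+1}{2}$. Thus $\dim B(G,p) = 2\binom{d+1}{2}$, which contradicts the upper bound $\dim B(G,p) \leq \binom{d+1}{2}$ from \cref{lemma:affineedgedim} whenever $d \geq 1$. The main obstacle in making this plan rigorous is the translation from the abstract matroid separation to the genuine geometric product decomposition of $B(G,p)$; once bridge-freeness of $G$ supplies $|V_i| \geq d+2$ and a high-degree subgraph inside each $G_i$, the rest is a routine invocation of results already in the excerpt.
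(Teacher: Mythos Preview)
The paper does not prove this theorem; it is quoted from \cite{garamvolgyi.etal_2022} without argument. Your proposal therefore is not being compared against an existing proof in the paper, but it is worth noting that your argument is correct and gives a self-contained derivation from the machinery developed here.

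Your route---transfer the separation of $\mathcal{R}_d(G)$ to $\stressmatroid{d}{G}$ via \cref{lemma:dualconnectivity}, dualize, identify $\stressmatroid{d}{G}^*$ with $\affinematroid{d}{G}$ using \cref{theorem:affinematroidstressmatroid}, and then extract a dimension contradiction from $B(G,p)$---is sound, and the step you flag as the ``main obstacle'' is in fact handled cleanly by \cref{lemma:algmatseparation} since $B(G,p)$ is linear. Two small streamlinings: you can invoke \cref{lemma:stressmatroidconnectivity} directly in place of the \cref{lemma:algmatseparation}/\cref{lemma:dualconnectivity} combination, and for the final contradiction you do not need the explicit product decomposition of $B(G,p)$ at all---once $(E_1,E_2)$ is a separation of $\stressmatroid{d}{G}^*$, rank additivity together with the lower bound $\rk(\stressmatroid{d}{G_i}^*) \geq \rk(\affinematroid{d}{G_i}) = \binom{d+1}{2}$ from \cref{theorem:affinematroidstressmatroid} and \cref{lemma:affinematroid} already forces $\rk(\affinematroid{d}{G}) \geq 2\binom{d+1}{2}$. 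Your use of Hendrickson's theorem (derived from \cref{lemma:globallyrigidstronglylinked} and \cref{corollary:bridges}, as the paper itself remarks) to guarantee each $G_i$ contains a circuit is exactly what is needed, and there is no circularity with the statement being proved.
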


\noindent The uniform sparsity of minimally globally $d$-rigid graphs follows easily from the following general result. 

\begin{theorem}\label{theorem:sparsity} \emph{(Sparsity and stress-linked pairs.)} \newline
    Let $G = (V,E)$ be a graph on $n \geq d+2$ vertices. Suppose that for every edge $uv \in E$, $\{u,v\}$ is not \strongly{d} in $G-uv$. Then $|E| \leq r_d(G) + n - d - 1$, with equality if and only if $G = K_{d+2}$. 
\end{theorem}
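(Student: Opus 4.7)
The plan is to prove the strengthened inequality
\[|E(G)| - r_d(G) + \stressnullity{G} \leq n,\]
from which the theorem follows immediately, since $\stressnullity{G} \geq d+1$ (as $n \geq d+1$) gives $|E(G)| - r_d(G) \leq n - d - 1$.

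I would prove this by induction on $|E(G)|$. The base case $|E(G)| = 0$ is immediate: then $r_d(G) = 0$ and $\stressnullity{G} = n$ by \cref{lemma:largenullity}(a). For the inductive step, pick any edge $e \in E(G)$. The key observation is that the hypothesis of the theorem is inherited by $G - e$: for any edge $f$ of $G - e$ with endpoints $\{x,y\}$, the assumption gives that $\{x,y\}$ is not \strongly{d} in $G - f$, and since $(G - e) - f \subseteq G - f$, the contrapositive of \cref{lemma:stronglylinkedsubgraph} shows that $\{x,y\}$ is not \strongly{d} in $(G - e) - f$ either. Hence induction applies to $G - e$. To finish the inductive step I would distinguish two cases. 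If $e$ is an $\mathcal{R}_d$-bridge in $G$, then $|E(G)| - r_d(G) = |E(G - e)| - r_d(G - e)$, while \cref{lemma:bridgeconverse} gives $\stressnullity{G} = \stressnullity{G - e}$, so the bound transfers directly. If $e = uv$ is not a bridge, then $\{u,v\}$ is $\mathcal{R}_d$-linked in $G - e$, and the hypothesis that $\{u,v\}$ is not \strongly{d} in $G - e$ forces $\stressnullity{G} \leq \stressnullity{G - e} - 1$ directly from the definition of \strongly{d} pairs. Combined with $|E(G)| - r_d(G) = |E(G - e)| - r_d(G - e) + 1$, the sum in question is again at most $n$.

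For the equality case, suppose $|E(G)| = r_d(G) + n - d - 1$. The strengthened inequality then forces $\stressnullity{G} = d+1$, so \cref{theorem:stresskernel} implies that $G$ is \globrig{d}. For any $e \in E(G)$, the assumption combined with \cref{lemma:globallyrigidstronglylinked} precludes $G - e$ from being \globrig{d}, so $G$ is minimally \globrig{d}. In particular $G$ is \rig{d}, so $r_d(G) = nd - \binom{d+1}{2}$ and $|E(G)| = n(d+1) - \binom{d+2}{2}$. The equality case of the main result of \cite{garamvolgyi.jordan_2023} then forces $G = K_{d+2}$, completing the proof. I expect the only step requiring any care is guessing the right strengthening; with it in hand, the induction is essentially routine thanks to the clean subgraph inheritance supplied by \cref{lemma:stronglylinkedsubgraph}.
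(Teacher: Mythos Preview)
Your proof of the inequality is correct and is essentially the paper's argument recast as an induction on $|E|$. The paper fixes a maximal $\mathcal{R}_d$-independent subgraph $G_0$ and adds the remaining edges $e_1,\dots,e_k$ one at a time, observing that $\stressnullity{\cdot}$ strictly drops at each step, so the chain $d+1\leq\stressnullity{G}<\dots<\stressnullity{G_0}=n$ gives $k\leq n-d-1$. Your invariant $|E|-r_d(G)+\stressnullity{G}\leq n$ is exactly this chain repackaged, and your two cases (bridge vs.\ non-bridge) correspond to whether the removed edge lies in the basis or not. Both arguments use the contrapositive of \cref{lemma:stronglylinkedsubgraph} in the same way to pass the hypothesis to the subgraph.

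The equality case is where you genuinely diverge. You correctly deduce $\stressnullity{G}=d+1$, hence (via \cref{theorem:stresskernel} and \cref{lemma:globallyrigidstronglylinked}) that $G$ is minimally \globrig{d} with $|E|=n(d+1)-\binom{d+2}{2}$, and then import the equality characterization from \cite{garamvolgyi.jordan_2023} as a black box. The paper instead argues internally: if equality holds and some $\mathcal{R}_d$-circuit $C$ in $G$ is not a copy of $K_{d+2}$, then by choosing the basis to contain $C-e$ and adding $e$ first, one gets $\stressnullity{G_1}\leq n-2$ from \cref{lemma:largenullity}(b), forcing $k\leq n-d-2$, a contradiction; it then uses \cref{theorem:globallyrigidMconnected} to conclude that $G$ is complete, hence $K_{d+2}$. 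Your route is shorter but less self-contained; the paper's route squeezes extra mileage out of the same counting mechanism and avoids reliance on the earlier paper it is meant to strengthen.
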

\begin{proof}
    Let $G_0 = (V,E_0)$ be a maximal $\mathcal{R}_d$-independent spanning subgraph of $G$, let $E - E_0 = \{e_1,\ldots,e_k\}$, and let us define $G_i = G_0 + \{e_1,\ldots,e_i\}$ for each $i \in \{1,\ldots,k\}$. The maximal choice of $G_0$ and the assumption on $G$ imply that the pair of end vertices $\{u_i,v_i\}$ of $e_i$ is $\mathcal{R}_d$-linked, but not \strongly{d} in $G_{i-1}$, for each $i \in \{1,\ldots,k\}$. Hence we have
\[d+1 \leq \stressnullity{G} = \stressnullity{G_k} < \stressnullity{G_{k-1}} < \cdots < \stressnullity{G_1} < \stressnullity{G_0} \leq n.\]It follows that $k \leq n-d-1$ and \[|E| = |E_0| + k \leq r_d(G) + n-d-1.\]
    
    It only remains to characterize the cases of equality. Let us thus assume that $|E| = r_d(G) + n - d - 1$.  
    The argument in the first paragraph now gives $\stressnullity{G} = d+1$, so by \cref{theorem:ght}, $G$ is \globrig{d}. 
    
    We first claim that every $\mathcal{R}_d$-circuit in $G$ is a copy of $K_{d+2}$. Let us suppose, for a contradiction, that this is not the case, and let $C$ be an $\mathcal{R}_d$-circuit in $G$ that is not isomorphic to $K_{d+2}$. Let $e \in E(C)$ be an arbitrary edge of $C$. Now $C-e$ is $\mathcal{R}_d$-independent, and hence we may extend it to a maximal $\mathcal{R}_d$-independent subgraph $G_0' = (V,E_0')$. Note that $G_1' = G_0' + e$ contains the circuit $C$, so by \cref{lemma:largenullity} we have $\stressnullity{G_1} \leq n-2$. We can now repeat the argument of the first paragraph with $G_0'$ and starting with the edge $e_1 = e$ to deduce that $|E- E_0| \leq n-d-2$, and thus $|E| < r_d(G) + n-d-1$, contradicting our assumption on $G$.
    
    The rest of the proof follows that of~\cite[Theorem 3.7]{garamvolgyi.jordan_2023}. By \cref{theorem:globallyrigidMconnected}, $G$ is $\mathcal{R}_d$-connected, and thus every pair of edges in $G$ is contained in an $\mathcal{R}_d$-circuit. Since every $\mathcal{R}_d$-circuit of $G$ is complete, this means that every pair of edges in $G$ is contained in a complete subgraph. But a graph with this property and without isolated vertices must  necessarily be complete: if $u$ and $v$ are arbitrary vertices and $e$ and $f$ are edges incident to $u$ and $v$, respectively, then the fact that $e$ and $f$ are contained in a complete subgraph shows that $u$ and $v$ must be adjacent in $G$.
    
    Thus $G$ is a complete graph on at least $d+2$ vertices. It is well-known that if $n \geq d+3$, then $G-uv$ is \globrig{d} for any edge $uv$, and thus by \cref{lemma:globallyrigidstronglylinked}, $\{u,v\}$ is \strongly{d} in $G-uv$, contradicting our assumption on $G$. It follows that $G = K_{d+2}$, as claimed.
\end{proof}

\begin{theorem}\label{corollary:minglobrigid} \emph{(Sparsity of minimally globally rigid graphs.)} \newline
    Let $G$ be a graph. If $G$ is minimally \globrig{d}, then for each subset $X \subseteq V(G)$ with $|X| \geq d+2$ we have \[|E(G[X])| \leq r_d(G[X]) + |X| - d - 1 \leq (d+1)|X| - \binom{d+2}{2}.\] The first inequality is strict unless $G[X] = K_{d+2}$.
\end{theorem}
\begin{proof}
    Let us fix $X \subseteq V(G)$. If $|X| = d+2$, then it is easy to see that the first inequality is satisfied and that it holds with equality if and only if $G[X] = K_{d+2}$. Thus let us suppose that $|X| \geq d+3$, and let us consider a pair of vertices $u,v \in X$ with $uv \in E(G)$. Since $G$ is minimally \globrig{d}, $\{u,v\}$ is not globally linked in $G-uv$ in $\CC^d$, and hence it is not globally linked in $G[X] - uv$ in $\CC^d$. By \cref{theorem:nongloballylinked} it follows that $\{u,v\}$ is not \strongly{d} in $G[X]-uv$. Thus \cref{theorem:sparsity} applies, giving $|E(G[X])| < r_d(G[X]) + |X| - d - 1$. The second inequality follows from the bound $r_d(G[X]) \leq d|X| - \binom{d+1}{2}$ by a straightforward calculation.
\end{proof}

Using the coning results presented in the previous section, we can also obtain sparsity results for minimally edge- and vertex-redundantly globally rigid graphs.
Let $G = (V,E)$ be a graph and let $k,\ell$ be a pair of positive integers. We say that $G$ is \emph{globally $[k,\ell,d]$-rigid} if $|V| \geq k$ and $G - S - F$ is globally $d$-rigid for any sets $S \subseteq V$ and $F \subseteq E$ satisfying $|S| \leq k-1$ and $|F| \leq \ell-1$. Using this notation, being globally $d$-rigid is the same as being globally $[1,1,d]$-rigid.
We say that $G$ is \emph{minimally globally $[k,\ell,d]$-rigid} if it is globally $[k,\ell,d]$-rigid, but $G-e$ is not, for every $e \in E$.

\begin{theorem} \emph{(Sparsity of minimally redundantly globally rigid graphs.)} \newline
    Let $G = (V,E)$ be a minimally globally $[k,\ell,d]$-rigid graph and let $\bigdim = d+k+\ell- 2$. For each subset $X \subseteq V(G)$ with $|X| \geq \bigdim + 2$, we have
    \begin{align*}
        |E(G[X])| \leq r_{\bigdim}(G[X]) + |X| - \bigdim - 1 \leq (\bigdim + 1)|X| - \binom{\bigdim+2}{2}.
    \end{align*}
    The first inequality is strict unless $G[X] = K_{\bigdim+2}$.
\end{theorem}
\begin{proof}
    We will show that for every edge $uv \in E$, $\{u,v\}$ is not \strongly{\bigdim} in $G-uv$. The result then follows from \cref{theorem:sparsity}.

    Let us fix $uv \in E(G)$. By the minimality of $G$, $G-uv$ is not globally $[k,\ell,d]$-rigid, and thus there exist sets $S \subseteq V$ and $F \subseteq E$ with $|S| \leq k-1$ and $|F| \leq \ell-1$ and such that $G-S-F-uv$ is not globally $d$-rigid. By assumption, $G-S-F$ is globally $d$-rigid; in particular, $uv \notin F$, $u,v \notin S$, and by \cref{lemma:stronglylinkedsubgraph}, $\{u,v\}$ is not \strongly{d} in $G-S-F-uv$.

    Let us write $S = \{s_1,\ldots,s_q$\} and $F = \{u_1v_1,\ldots,u_tv_t\}$. We may assume that $u,v \notin \{u_1,\ldots,u_t\}$ by possibly swapping the label of $u_i$ and $v_i$. Let $C = \{s_1,\ldots,s_q,u_1,\ldots,u_t\}$ and let $G' = G - uv + \{xy: x \in V, y \in C\}$. Note that $G'$ is isomorphic to the $|C|$-fold cone of $G-C-uv$, and that it contains $G-uv$ as a subgraph. Since $\{u,v\}$ is not \strongly{d} in $G-S-F-uv$, by \cref{lemma:stronglylinkedsubgraph} it is not \strongly{d} in $G-C-uv$. If follows from \cref{theorem:stresslinkedconing} that $\{u,v\}$ is not \strongly{(d+|C|)} in $G'$, and hence by \cref{corollary:dimensiondropping} (and the fact that $d+|C| \leq \bigdim$) it is not \strongly{\bigdim} in $G'$. We conclude by \cref{lemma:stronglylinkedsubgraph} that $\{u,v\}$ is not \strongly{\bigdim} in $G-uv$, as desired.
\end{proof}

We can use
\cref{corollary:bridges} and \cref{theorem:gluing} to verify two conjectures that are unrelated to the notion of globally linked pairs or global rigidity. Both of them follow quickly from the following gluing result. We say that a graph $G$ is \emph{$\mathcal{R}_d$-bridgeless} if it does not contain any $\mathcal{R}_d$-bridges.

\begin{theorem}\label{theorem:Rdbridgeless}\emph{($\mathcal{R}_d$-bridgeless gluing theorem.)}\newline
     Let $G$ be the union of the graphs $G_1 = (V_1,E_1), G_2 = (V_2,E_2)$ with $|V_1 \cap V_2| \leq d+1$, and let $uv \in E_1 \cap E_2$. Suppose that $G$ is $\mathcal{R}_d$-bridgeless and that $\{u,v\}$ is $\mathcal{R}_d$-linked in both $G_1-uv$ and $G_2-uv$. Then $G-uv$ is also $\mathcal{R}_d$-bridgeless.
\end{theorem}
\begin{proof}
    By \cref{theorem:gluing}, $\{u,v\}$ is \strongly{d} in $G-uv$, and hence by \cref{corollary:bridges}, the set of $\mathcal{R}_d$-bridges is the same in $G-uv$ and in $G$. Hence if one is $\mathcal{R}_d$-bridgeless, then so is the other. 
\end{proof}

In~\cite{connelly_2011a}, Connelly showed that if $G = G_1 \cup G_2$, where $G_1$ and $G_2$ are \globrig{d}, $|V(G_1) \cap V(G_2)| = d+1$, and $uv \in E(G_1) \cap E(G_2)$, then $G-uv$ is also \globrig{d}. He asked whether a similar ``gluing'' result holds for redundantly \rig{d} graphs, that is, graphs that remain \rig{d} after the deletion of any edge. Note that a graph is redundantly \rig{d} if and only if it is \rig{d} and $\mathcal{R}_d$-bridgeless. Our next result gives an affirmative answer to this question. We note that the same proof technique can also be used to give a new proof of the result of Connelly.

\begin{theorem}\label{theorem:connellyquestion} \emph{(Redundantly rigid gluing theorem.)} \newline
    Let $G$ be the union of the graphs $G_1 = (V_1,E_1), G_2 = (V_2,E_2)$ with $|V_1 \cap V_2|\in \{d,d+1\}$, where $G_1$ and $G_2$ are both redundantly \rig{d}. Then $G-uv$ is redundantly \rig{d} for any $uv \in E_1 \cap E_2$. 
\end{theorem}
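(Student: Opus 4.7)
The plan is to combine the gluing theorem (\cref{theorem:gluing}) with the bridge--preservation result (\cref{corollary:bridges}) essentially in series: first use the gluing theorem to certify that $\{u,v\}$ is \strongly{d} in $G-uv$, and then use the bridge result to transfer the ``no $\mathcal{R}_d$-bridges'' property from $G$ to $G-uv$.

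First I would verify the basic rigidity facts. Since $G_1$ and $G_2$ are redundantly \rig{d}, each $G_i-uv$ is \rig{d}, and hence also $|V_i| \geq d+2$ (a graph on $\leq d+1$ vertices cannot be redundantly \rig{d}). As $|V(G_1-uv) \cap V(G_2-uv)| = |V_1 \cap V_2| \geq d$, \cref{lemma:whiteleygluing}(a) gives that $G-uv = (G_1-uv) \cup (G_2-uv)$ is \rig{d}. Moreover, $G$ itself has no $\mathcal{R}_d$-bridges, because every $e \in E_i$ is contained in some $\mathcal{R}_d$-circuit of $G_i$ (by redundant rigidity of $G_i$), which remains an $\mathcal{R}_d$-circuit of $G$.

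Next I would invoke \cref{theorem:gluing} with the decomposition $G-uv = (G_1-uv) \cup (G_2-uv)$. The hypothesis $|V_1 \cap V_2| \leq d+1$ is given, and since each $G_i-uv$ is \rig{d} (on at least $d+1$ vertices), every pair of vertices in it is $\mathcal{R}_d$-linked. In particular, every pair of vertices in $V_1 \cap V_2$ is $\mathcal{R}_d$-linked in both $G_1-uv$ and $G_2-uv$, so \cref{theorem:gluing} yields that $\{u,v\}$ is \strongly{d} in $G-uv$.

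Finally, I would apply \cref{corollary:bridges} to $G-uv$ and the pair $\{u,v\}$: since $\{u,v\}$ is \strongly{d} in $G-uv$, the set of $\mathcal{R}_d$-bridges of $G-uv$ is the same as the set of $\mathcal{R}_d$-bridges of $(G-uv)+uv = G$, which is empty by the first paragraph. Combined with the fact that $G-uv$ is \rig{d}, this shows that $G-uv$ is redundantly \rig{d}, completing the proof. There is no real obstacle here beyond correctly lining up the hypotheses of \cref{theorem:gluing,corollary:bridges}; the main conceptual input is already packaged in those two results.
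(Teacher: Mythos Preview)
Your proof is correct and follows essentially the same approach as the paper's: use \cref{theorem:gluing} to show that $\{u,v\}$ is \strongly{d} in $G-uv$, then apply \cref{corollary:bridges} to conclude that $G-uv$ has the same (empty) set of $\mathcal{R}_d$-bridges as $G$. The only cosmetic difference is that the paper deduces the rigidity of $G-uv$ and the bridgelessness of $G$ in one stroke by first noting that $G$ is redundantly \rig{d} via \cref{lemma:whiteleygluing}(a), whereas you argue these two facts separately.
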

\begin{proof}
    Let us fix $uv \in E_1 \cap E_2$. It follows from \cref{lemma:whiteleygluing}(a) that $G$ is redundantly \rig{d}, and hence $G-uv$ is \rig{d}. Since $G_1$ and $G_2$ are both $\mathcal{R}_d$-bridgeless, so is $G$, and $\{u,v\}$ is $\mathcal{R}_d$-linked in both $G_1-uv$ and $G_2-uv$. By \cref{theorem:Rdbridgeless}, we conclude that $G-uv$ is $\mathcal{R}_d$-bridgeless, as desired.
\end{proof}

We say that a graph $G$ is the \emph{$t$-sum} of two graphs $G_1$ and $G_2$ along an edge $e \in E(G_1) \cap E(G_2)$ if $G = G_1 \cup G_2 -e$ and $G_1 \cap G_2$ is a complete graph on $t$ vertices. 
In~\cite{grasegger.etal_2022}, Grasegger, Guler, Jackson and Nixon conjectured that if $2 \leq t \leq d+1$ and $G$ is the $t$-sum of $G_1$ and $G_2$, then $G$ is an $\mathcal{R}_d$-circuit if and only if $G_1$ and $G_2$ are both $\mathcal{R}_d$-circuits. Subsequently, in~\cite{grasegger.etal_2023}, they gave a counterexample to the ``only if'' direction. Here we show that the ``if'' direction holds. In fact, we give a full characterization of when $G$ is an $\mathcal{R}_d$-circuit, as follows.

\begin{theorem}\label{theorem:tsumcircuitstronger} \emph{($\mathcal{R}_d$-circuits and $t$-sums.)} \newline
    Let $G$ be the $t$-sum of the graphs $G_1 = (V_1,E_1)$ and $G_2 = (V_2,E_2)$ along an edge $uv \in E_1 \cap E_2$, for some integer $t$ with $2 \leq t \leq d+1$. Then $G$ is an $\mathcal{R}_d$-circuit if and only $G_i$ contains a unique $\mathcal{R}_d$-circuit $G'_i$ for $i \in \{1,2\}$, and we have $uv \in E(G'_1) \cap E(G'_2)$ and $G+uv = G_1' \cup G_2'$.
\end{theorem}
\begin{proof}
    Necessity is shown in~\cite[Lemma 1]{grasegger.etal_2023}, so we only prove sufficiency. It follows from \cref{lemma:ksum}(b) that each generic realization of $G$ in $\CC^d$ has a one-dimensional space of stresses, i.e., that $G$ contains a unique $\mathcal{R}_d$-circuit. Thus it suffices to show that $G$ is $\mathcal{R}_d$-bridgeless. This follows from applying \cref{theorem:Rdbridgeless} to the decomposition $G+uv = G_1' \cup G_2'$. 
\end{proof}

\begin{corollary}
    For any integer $t$ with $2 \leq t \leq d+1$, the $t$-sum of two $\mathcal{R}_d$-circuits is again an $\mathcal{R}_d$-circuit.
\end{corollary}

\section{Concluding remarks}\label{section:concluding}

In this paper we introduced (among others) the notion of \strongly{d} vertex pairs. We showed that \strongly{d} pairs are also globally linked in $\CC^d$, and we proved the ``\strongly{d}'' analog of a number of conjectures about globally linked pairs. The biggest remaining question is whether being \strongly{d} is actually equivalent to being globally linked in $\RR^d$ (or in $\CC^d$). We pose the stronger version as a conjecture.

\begin{conjecture}\label{conjecture:stresskernel}
    Let $G$ be a graph and let $u,v \in V(G)$ be a pair of vertices. The pair $\{u,v\}$ is globally linked in $G$ in $\RR^d$ if and only if  $\{u,v\}$ is \strongly{d} in $G$.
\end{conjecture}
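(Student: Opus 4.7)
Since the direction ``\strongly{d} $\Rightarrow$ globally linked in $\CC^d$ (and hence in $\RR^d$)'' is already established in \cref{theorem:nongloballylinked}, my plan targets the nontrivial converse: if $\{u,v\}$ is globally linked in $G$ in $\RR^d$, then $\{u,v\}$ is \strongly{d} in $G$. I would proceed by contrapositive: assume $\{u,v\}$ is $\mathcal{R}_d$-linked but not \strongly{d}, and aim to construct a counterexample to global linkedness in $\RR^d$, that is, a pair of equivalent real realizations of $G$ with different $uv$-distances.

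Fix a generic realization $(G,p)$ in $\RR^d$ and a generic $d$-stress $\omega \in S(G+uv, p)$ that is nonzero on $uv$. A dimension count using $S(G+uv, p) = S(G,p) + \CC \omega$ gives the identity $\sharedstresskernel{d}(G+uv, p) = \sharedstresskernel{d}(G,p) \cap \stresskernel{d}{G+uv}(\omega)$. By \cref{lemma:def} the hypothesis that $\{u,v\}$ is not \strongly{d} in $G$ forces this inclusion to be strict; since both spaces are linear and defined over $\QQ$, their real points are dense, so there exists a real configuration $q \in \sharedstresskernel{d}(G,p) \cap \RR^{nd}$ with $\omega \notin S(G+uv, q)$. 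The plan is to use such a $q$ to produce a real framework $(G, q')$ equivalent to $(G,p)$ but with $m_{uv}(q') \neq m_{uv}(p)$.

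Bridging $q$ (in the shared stress kernel) to $q'$ (in the real equivalence orbit $R_p^{\RR} = m_{d,G}^{-1}(m_{d,G}(p)) \cap \RR^{nd}$) is where the real difficulty lies. One natural attempt is degree-theoretic: the technique of \cite{gortler.etal_2010}, which used a degree count over $\RR$ to promote the mere existence of a PSD stress matrix to global rigidity, might be adapted to show that the failure of $\omega$ to extend to all of $\sharedstresskernel{d}(G,p)$ forces a real equivalent framework with different $uv$-distance. Alternatively, one could attempt a continuous-deformation argument: starting from $p$ and moving along a real path in $\sharedstresskernel{d}(G,p)$ toward $q$, look for a moment at which the path enters $R_p^{\RR}$ at a point where $m_{uv}$ takes a value different from $m_{uv}(p)$. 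In either case, one would want to exploit \cref{theorem:rigiditycontactlocusmatroid} to transfer information about $\omega$ to the geometry of the contact locus, where the failure of $\{u,v\}$ to be \strongly{d} should translate into the projection $\pi_{uv}$ being nonconstant on $\contactlocus{d}{G+uv}(\omega)$ over $m_{d,G}(p)$.

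The main obstacle is precisely this bridging step. In general $R_p \subsetneq \sharedstresskernel{d}(G,p)$ -- with equality iff $G$ is \globrig{d} -- so the ``defect'' witnessed by $q$ in the shared stress kernel need not propagate to the equivalence orbit in any direct way. Moreover, degree theory, the standard tool for passing between the complex and real settings, is known to behave poorly on the complex measurement variety (as the author remarks in the introduction when explaining why the method of \cite{gortler.etal_2010} does not extend to $\CC^d$), so any naive reuse over $\RR$ is likely to encounter the same essential obstruction. Resolving this gap is, in my view, the heart of \cref{conjecture:stresskernel}, and it will likely require new ideas about the interplay between the generic stress matroid developed in \cref{section:genericstressmatroid} and the real topology of measurement varieties.
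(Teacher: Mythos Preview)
The statement you are attempting is posed in the paper as an open \emph{conjecture}; the paper does not prove it. The implication ``\strongly{d} $\Rightarrow$ globally linked in $\RR^d$'' is indeed \cref{theorem:nongloballylinked}, but the converse is left open, and the paper's concluding section explicitly discusses the difficulty of adapting the degree-theoretic approach of \cite{gortler.etal_2010}. So there is no proof in the paper to compare your proposal against.

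Your proposal is not a proof either, and to your credit you say so: the ``bridging step'' from a witness $q \in \sharedstresskernel{d}(G,p)$ with $\omega \notin S(G+uv,q)$ to an actual real equivalent realization $q' \in R_p^{\RR}$ with $m_{uv}(q') \neq m_{uv}(p)$ is precisely the gap. Your diagnosis of why this is hard (the strict containment $R_p \subsetneq \sharedstresskernel{d}(G,p)$ in the non-globally-rigid case, and the failure of degree theory over $\CC$) matches the paper's own assessment. The paper even formulates a subsidiary conjecture immediately after \cref{conjecture:stresskernel} aimed exactly at this bridging problem: that for every generic real $(G,p)$ the shared stress kernel should contain a real generic witness pair $(q,q')$ to non-linkedness.

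One technical correction: the spaces $\sharedstresskernel{d}(G,p)$ and $\stresskernel{d}{G+uv}(\omega)$ are \emph{not} defined over $\QQ$, since they depend on the generic point $p$ (and on $\omega$). They are, however, defined over $\RR$ when $p$ is real, which is enough for the density of real points that you use. This does not affect your overall analysis, but the claim as written is incorrect.
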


To support \cref{conjecture:stresskernel}, let us note that globally linked pairs in $\CC^d$ have a characterization that is analogous to the following equivalent definition of \strongly{d} pairs (cf.\ \cref{lemma:def}(b)): a vertex pair $\{u,v\}$ is \strongly{d} in $G$ if and only if it is $\mathcal{R}_d$-linked in $G$ and for any generic realization $(G,p)$ in $\CC^d$, we have $\sharedstresskernel{d}(G,p) \subseteq \sharedstresskernel{d}(G+uv,p)$.

\begin{proposition}\label{proposition:globallylinkedchar}
    Let $G$ be a graph and let $u,v \in V(G)$ be a pair of vertices. Let $(G,p)$ be a generic realization in $\CC^d$, and let $R_p = m_{d,G}^{-1}(m_{d,G}(p))$ denote the set of configurations $q$ for which $(G,q)$ is equivalent to $(G,p)$. Then $\{u,v\}$ is globally linked in $G$ in $\CC^d$ if and only if $\{u,v\}$ is $\mathcal{R}_d$-linked in $G$ and $R_p \subseteq \sharedstresskernel{d}(G+uv,p)$.
\end{proposition}
\begin{proof}[Proof (sketch)]
    If $\{u,v\}$ is globally linked in $G$ in $\CC^d$, then $R_p$ is in fact equal to the fiber $m_{d,G+uv}^{-1}(m_{d,G+uv}(p))$, which is contained in $\sharedstresskernel{d}(G+uv,p)$ by \cref{theorem:connelly}. The other direction follows from the proof of \cref{theorem:nongloballylinked}.
\end{proof}

We note that checking whether a pair of vertices is \strongly{d} can be done in randomized polynomial time with an approach similar to the one given in~\cite[Section 5]{gortler.etal_2010} for checking global $d$-rigidity. In the $d = 1$ and $d = 2$ cases, \cref{theorem:1dimchar,theorem:2dimchar} also lead to efficient deterministic recognition algorithms. \cref{conjecture:stresskernel} would imply the analogous algorithmic results for recognizing globally linked vertex pairs.

It would be natural to try to adapt the proof of \cref{theorem:ght} in~\cite{gortler.etal_2010} to prove \cref{conjecture:stresskernel}. One sign of difficulty for this approach is that being globally linked in $\RR^d$ is not a generic property.
The following conjecture may remedy this problem to some extent. It says that if $\{u,v\}$ is not globally linked in $G$ in $\RR^d$, then $\sharedstresskernel{d}(G,p)$ contains a witness of this, for every generic realization $(G,p)$ in $\RR^d$.

\begin{conjecture}
    Let $G$ be a graph and let $u,v \in V(G)$ be a pair of vertices. Suppose that $\{u,v\}$ is not globally linked in $G$ in $\RR^d$. Then for every generic realization $(G,p)$ in $\RR^d$, there exist real configurations $q,q' \in \sharedstresskernel{d}(G,p)$ such that $(G,q)$ is generic and $(G,q)$ and $(G,q')$ are equivalent, but $(G+uv,q)$ and $(G+uv,q')$ are not equivalent.
\end{conjecture}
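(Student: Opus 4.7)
The plan is to first spread the hypothesis to every generic complex realization using the constructible-set machinery of \cref{subsection:varietiesoverQ}, and then to descend to real witnesses by exploiting that $\sharedstresskernel{d}(G,p)$ is a linear space with ``large'' real part. For the first stage, I would consider the affine variety
\[
\mathcal{W} = \{(p,q,q') \in (\CC^{nd})^3 : q, q' \in \sharedstresskernel{d}(G,p),\ m_{d,G}(q) = m_{d,G}(q')\},
\]
defined over $\QQ$, together with its constructible subset $\mathcal{W}^* = \mathcal{W} \setminus \{m_{uv}(q) = m_{uv}(q')\}$. The hypothesis provides a generic $p_0 \in \RR^{nd}$ and a real $r_0$ with $m_{d,G}(p_0) = m_{d,G}(r_0)$ and $m_{uv}(p_0) \neq m_{uv}(r_0)$; by \cref{theorem:connelly}, $r_0 \in \sharedstresskernel{d}(G,p_0)$, so $(p_0, p_0, r_0) \in \mathcal{W}^*$. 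Projecting to the first coordinate yields a $\QQ$-constructible set $\pi_1(\mathcal{W}^*) \subseteq \CC^{nd}$ (by \cref{theorem:constructiblesetsoverQ}(a)) that contains the generic point $p_0$. Its $\QQ$-Zariski closure must therefore equal $\CC^{nd}$, and \cref{theorem:constructiblesetsoverQ}(c) then guarantees that $\pi_1(\mathcal{W}^*)$ contains every generic point of $\CC^{nd}$. This already establishes a complex variant of the conjecture.

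For the real case the key additional observation is that once $q \in \sharedstresskernel{d}(G,p)$ is $\CC^{nd}$-generic, a dimension count combined with \cref{lemma:genericdense} forces $S(G,q) = S(G,p)$, and consequently $\sharedstresskernel{d}(G,q) = \sharedstresskernel{d}(G,p)$. Since $\sharedstresskernel{d}(G,p)$ is a linear subspace defined by equations whose coefficients depend on $p$, its real part $\sharedstresskernel{d}(G,p)^\RR$ has real dimension equal to the complex dimension and contains plenty of $\CC^{nd}$-generic real configurations. Thus the conjecture reduces to the following: for every generic real $p$, there exists a $\CC^{nd}$-generic real $q \in \sharedstresskernel{d}(G,p)^\RR$ that admits a real equivalent $q' \in \RR^{nd}$ with $m_{uv}(q) \neq m_{uv}(q')$; such a $q'$ automatically lies in $\sharedstresskernel{d}(G,q)^\RR = \sharedstresskernel{d}(G,p)^\RR$ by \cref{theorem:connelly}.

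The main obstacle will be this last reduction, namely propagating the single real witness $p_0$ to every generic real $p$. The difficulty is that the set $D \subseteq \RR^{nd}$ of admissible configurations is only semi-algebraic rather than $\QQ$-constructible, so the clean Galois/Zariski argument from the first stage does not apply. A promising avenue is to show, using the linearity of $\sharedstresskernel{d}(G,p)$ combined with \cref{theorem:fiberdimension} applied to the restriction $m_{d,G}|_{\sharedstresskernel{d}(G,p)}$, that whenever complex witnesses exist the corresponding real fibers are generically positive-dimensional and fail to be locally constant on $m_{uv}$. An implicit-function-theorem argument at the real smooth point $(p_0, r_0)$ would then furnish a real-analytic family of witnesses in a neighborhood of $p_0$; extending this to every generic real $p$ appears to require either a connectedness argument on the relevant semi-algebraic incidence variety or a degree-theoretic technique in the spirit of \cite{gortler.etal_2010}.
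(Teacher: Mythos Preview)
The statement you are attempting is a \emph{conjecture} in the paper, not a theorem: the paper offers no proof and explicitly leaves it open. So there is no ``paper's own proof'' to compare against, and your proposal should be read as an attack on an open problem rather than a reconstruction.

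With that in mind, two remarks on the substance of your plan.

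First, your ``complex stage'' is more elaborate than necessary and, as written, does not actually deliver what you need. Since being globally linked in $\CC^d$ implies being globally linked in $\RR^d$, the hypothesis that $\{u,v\}$ is not globally linked in $\RR^d$ already gives that it is not globally linked in $\CC^d$. Hence for \emph{every} generic complex $p$ one may simply take $q=p$ and choose any equivalent complex $q'$ with $m_{uv}(p)\neq m_{uv}(q')$; both lie in $\sharedstresskernel{d}(G,p)$ by \cref{theorem:connelly}, and $q$ is generic by construction. Your constructible-set argument through $\mathcal{W}^*$ is therefore unnecessary, and in fact it loses information: from $p\in\pi_1(\mathcal{W}^*)$ you only recover \emph{some} $(q,q')$ in the fiber, with no guarantee that $q$ is generic. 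The direct argument avoids this issue entirely.

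Second, and more importantly, the whole content of the conjecture is the real case, and here your proposal does not close the gap --- as you yourself concede. You correctly isolate the difficulty: the relevant locus of real $p$ admitting real witnesses is only semi-algebraic, so the $\QQ$-constructible machinery of \cref{subsection:varietiesoverQ} does not apply. Your suggested remedies (implicit function theorem near $(p_0,r_0)$, connectedness of a semi-algebraic incidence set, or a degree-theoretic argument) are all plausible directions, but none is carried out, and it is precisely here that the paper remarks the degree-theory approach of \cite{gortler.etal_2010} does not obviously adapt. In short, your plan identifies the right obstruction but does not overcome it; the conjecture remains open.
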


One of the most interesting features of \strongly{d} vertex pairs is given by \cref{corollary:bridges}, which says that adding an edge between a \strongly{d} vertex pair in a graph does not change the set of its $\mathcal{R}_d$-bridges. The following conjecture is a generalization of this fact from $\mathcal{R}_d$-bridges to $\mathcal{R}_d$-components.

\begin{conjecture}\label{conjecture:Mcomp}
    Let $G$ be a graph and let $u,v \in V(G)$ be a pair of vertices. If $\{u,v\}$ is \strongly{d} in $G$, then there is some $\mathcal{R}_d$-component $G_0$ of $G$ such that $\{u,v\}$ is \strongly{d} in $G_0$.
\end{conjecture}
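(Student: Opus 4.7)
The plan is to identify a candidate $\mathcal{R}_d$-component $G_0$ via an appropriate $\mathcal{R}_d$-circuit and then reduce the claim to a surjectivity statement for a natural projection between shared stress kernels. Since $\{u,v\}$ is \strongly{d} (hence $\mathcal{R}_d$-linked) in $G$, there is an $\mathcal{R}_d$-circuit $C \subseteq G + uv$ containing $uv$; because $C$ is matroid-connected, the edges of $C - uv$ all lie in a single $\mathcal{R}_d$-component of $G$, which I take to be $G_0$. Each vertex in an $\mathcal{R}_d$-circuit has degree at least $d+1$, so both $u$ and $v$ belong to $V(G_0)$, and the circuit $C$ directly witnesses that $\{u,v\}$ is $\mathcal{R}_d$-linked in $G_0$. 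The remaining task is to verify the equality $\stressnullity{G_0+uv} = \stressnullity{G_0}$.

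Next, I would exploit the decomposition of stresses along $\mathcal{R}_d$-components. Let $G_0, G_1, \dots, G_k$ be the $\mathcal{R}_d$-components of $G$ and fix a generic realization $(G,p)$ in $\CC^d$; the $\mathcal{R}_d$-components of $G+uv$ are then $G_0+uv, G_1, \dots, G_k$. Since any stress decomposes as a sum of stresses supported on individual components, $S(G,p) = \bigoplus_i S(G_i,p_i)$ and $S(G+uv,p) = S(G_0+uv,p_0) \oplus \bigoplus_{i \geq 1} S(G_i,p_i)$. This gives $\sharedstresskernel{d}(G,p) = \{q : q|_{V_i} \in \sharedstresskernel{d}(G_i,p_i) \text{ for all } i\}$ and an analogous description of $\sharedstresskernel{d}(G+uv,p)$.

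Pick a generic $d$-stress $\omega$ of $G_0+uv$ with $\omega(uv) \neq 0$, extended by zero on the remaining components to a stress of $(G+uv,p)$. By \cref{lemma:def}(c) applied to $G$, we have $\omega \in S(G+uv,q)$ for every $q \in \sharedstresskernel{d}(G,p)$; since $\omega$ is supported on $E(G_0) \cup \{uv\}$, this is equivalent to $\omega \in S(G_0+uv,q_0)$ for every $q_0$ in the image of the projection $\pi_{V_0} : \sharedstresskernel{d}(G,p) \to \sharedstresskernel{d}(G_0,p_0)$. If this projection is surjective, then the linear condition $\omega \in S(G_0+uv,q_0)$ holds on all of $\sharedstresskernel{d}(G_0,p_0)$, and \cref{lemma:def}(c) applied in reverse to $G_0$ shows that $\{u,v\}$ is \strongly{d} in $G_0$. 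Thus the claim reduces to establishing this surjectivity.

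The main obstacle is proving surjectivity of $\pi_{V_0}$. Given $q_0 \in \sharedstresskernel{d}(G_0,p_0)$, one extends by picking $q|_{V_i} \in \sharedstresskernel{d}(G_i,p_i)$ compatibly with $q_0|_{V_0 \cap V_i}$ for each $i \neq 0$ (and consistently across pairwise overlaps of the $V_i$'s). Using $A(G_i,p_i) \subseteq \sharedstresskernel{d}(G_i,p_i)$, this is immediate whenever $|V_0 \cap V_i| \leq d+1$, since affine transformations of $\CC^d$ act transitively on configurations of up to $d+1$ generic points; in particular, $\mathcal{R}_d$-bridges (singleton components) pose no problem, since their shared stress kernel is all of $\CC^{2d}$. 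The hard case is handling non-singleton components sharing more than $d+1$ vertices with $G_0$. A natural attack is to combine Whiteley's gluing lemma (\cref{lemma:whiteleygluing}) with a rank count to argue that distinct non-trivial $\mathcal{R}_d$-components cannot share many vertices, exploiting the fact that any shared edge would force them into the same matroid component so that $E(G_0) \cap E(G_i) = \emptyset$. The subtlety is that examples such as the double banana show non-trivial $\mathcal{R}_d$-components need not be rigid on their vertex sets, so these rank-based arguments must be applied with care; a fallback is an inductive scheme on the number of components using \cref{lemma:completeseparator} and \cref{theorem:2separatorconverse} to peel off components attached along small separators, together with \cref{theorem:bridgegloballylinked} to dispose of $\mathcal{R}_d$-bridges.
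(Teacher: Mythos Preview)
The statement you are attempting to prove is listed in the paper as a \emph{conjecture}, not a theorem; the paper does not provide a proof and explicitly notes that it is only known to hold in the cases $d=1$ and $d=2$ (via \cref{theorem:1dimchar,theorem:2dimchar}). So there is no ``paper's own proof'' to compare against, and your proposal should be read as an attack on an open problem.

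Your reduction is natural, but it contains a genuine gap and one near-circular step. First, the assertion that ``the $\mathcal{R}_d$-components of $G+uv$ are $G_0+uv, G_1, \dots, G_k$'' is precisely one of the consequences the paper points out the conjecture would imply; you cannot invoke it as an input. Fortunately your argument does not actually use the description of $\sharedstresskernel{d}(G+uv,p)$ that this would yield, so this sentence can simply be deleted: the application of \cref{lemma:def}(c) only needs a single stress $\omega$ supported on $E(G_0)\cup\{uv\}$, and that exists because $C\subseteq G_0+uv$.

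The real obstruction is exactly where you locate it: surjectivity of $\pi_{V_0}\colon \sharedstresskernel{d}(G,p)\to \sharedstresskernel{d}(G_0,p_0)$. Your proposed attacks do not close this. The affine-extension idea only handles overlaps of size at most $d+1$, and there is no general bound on $|V(G_0)\cap V(G_i)|$ for distinct $\mathcal{R}_d$-components (they are only edge-disjoint). The rank-count suggestion via \cref{lemma:whiteleygluing} needs rigidity of the pieces, which, as you note yourself, $\mathcal{R}_d$-components need not have. The ``fallback'' via \cref{lemma:completeseparator} and \cref{theorem:2separatorconverse} only peels off components attached along a clique or along a vertex pair that fails to be $\mathcal{R}_d$-linked on one side; neither hypothesis is available in general. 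In short, the proposal isolates the right linear-algebraic statement but does not supply a mechanism to prove it, and the paper's treatment confirms that no such mechanism is currently known for $d\ge 3$.
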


In particular, \cref{conjecture:Mcomp} implies that if $\{u,v\}$ is \strongly{d} in $G$, then the $\mathcal{R}_d$-components of $G$ and $G+uv$ are essentially the same: the addition of $uv$ does not ``merge'' any two of these components. Similarly to how \cref{corollary:bridges} can be seen as a refinement of Hendrickson's theorem, \cref{conjecture:Mcomp} would give a refinement of \cref{theorem:globallyrigidMconnected}. Note that \cref{conjecture:Mcomp} holds in the $d = 1$ and $d = 2$ cases by \cref{theorem:1dimchar,theorem:2dimchar}.

In closing, let us briefly consider the applications of the ideas presented in this paper in a wider context. Rigidity theory is mainly concerned with the study of the homogeneous polynomial map $m_{d,G} : (\CC^{d})^V \rightarrow \CC^E$. A key feature of this map is that it is quadratic, and hence the entries of its Jacobian matrix are linear polynomials. This implies that for any vector $\omega \in \CC^E$, 
the stress kernel $\stresskernel{d}{G}(\omega)$ is linear, and in particular it is an irreducible affine variety. Similarly, the shared stress kernel $\sharedstresskernel{d}(G,p)$ of any configuration $p \in (\CC^{d})^V$ is linear and hence irreducible.

There are other natural problems that can be phrased as questions about an algebraic matroid given by the image of some homogeneous polynomial map $f$. Often $f$ maps some configuration space into $\RR^E$ or $\CC^E$, where $E$ is the edge set of a  (hyper-)graph, see~\cite{cruickshank.etal_2023,rosen.etal_2020}. For each such problem, the analog of the shared stress kernel can be defined, and whenever this space is irreducible, at least for generic points of the configuration space, we can also define an analog of stress-linked vertex pairs (or more generally, stress-linked hyperedges). 

This is the case, for example, in the low-rank matrix completion problem, where the associated polynomial map is given by the standard dot product and hence is quadratic, see~\cite{kiraly.etal_2015,singer.cucuringu_2010}. Applying the methods of this paper to this problem leads to an analog of \cref{theorem:sparsity}, giving a linear upper bound on the number of known entries in a minimally generically uniquely rank-$r$ completable partial matrix over $\RR$. Note that, in contrast with graph rigidity, in the case of matrix completion ``being uniquely completable over $\RR$'' is not a generic property (and thus is not the same as ``being uniquely completable over $\CC$''), see~\cite{jackson.etal_2016}. It would be interesting to understand the structure of the associated ``generic stress matroid'' in this case.

\section*{Acknowledgements}

I would like to thank Soma Villányi for the insightful question that led to the definition of \strongly{d} vertex pairs and, ultimately, to the results presented in this paper. I would also like to thank Steven Gortler and Tibor Jordán for useful discussions. Finally, I would like to thank Tony Nixon for the suggestion to also investigate the case of equality in \cref{theorem:sparsity}.

Parts of this paper were written while I attended the ``Focus Program on Geometric Constraint Systems'' at the Fields Institute for
Research in Mathematical Sciences, Toronto, and the ``Geometry of Materials, Packings and Rigid Frameworks'' semester program at ICERM, Providence. I am grateful to both the Fields Institute and ICERM for providing an excellent working environment.

This research was supported by the Hungarian Scientific Research Fund
grant no.\ K135421.

\printbibliography

\appendix
\crefalias{subsection}{appsec}
\section*{Appendix}
\addcontentsline{toc}{section}{Appendix}
\renewcommand{\thesubsection}{\Alph{subsection}}

\renewcommand{\thetheorem}{\Alph{subsection}.\arabic{theorem}}
\renewcommand{\theproposition}{\Alph{subsection}.\arabic{proposition}}
\renewcommand{\thelemma}{\Alph{subsection}.\arabic{lemma}}

\subsection{Algebraic geometry background}\label{appendix:alggeo}

Below we give a brief but comprehensive account of the algebraic geometry used in this paper. The content of the first part of the section is standard and can be found (often in much greater generality) in most textbooks on basic algebraic geometry, such as~\cite{harris_1995} or~\cite{shafarevich_2013}. The second part of the section describes the notion of generic points that is used in this paper. Its content is somewhat specialized, but is not new in the rigidity theory literature (see~\cite{garamvolgyi.etal_2022,gortler.etal_2019}, for example).

A set $X \subseteq \CC^k$ is an \emph{affine variety} if it is the set of common zeroes of some polynomials $f_i \in \CC[x_1,\ldots,x_k], i \in I$. The \emph{Zariski topology} on $\CC^k$ is the topology whose closed sets are the affine varieties of $\CC^k$.%
For an arbitrary set $X \subseteq \CC^k$, we use $\overline{X}$ to denote the \emph{Zariski closure} of $X$, which is the smallest affine variety containing $X$. A subset $U$ of an affine variety $X$ is a \emph{Zariski open subset} of $X$ if it can be written as $X - Y$ for some affine variety $Y$, and $U$ is \emph{Zariski dense in $X$} if $\overline{U} = X$.
An affine variety $X$ is \emph{homogeneous} if it is invariant under scaling, that is, $cx \in X$ for every $x \in X$ and $c \in \CC$.
An important class of homogeneous irreducible affine varieties are \emph{linear spaces}, which are just linear subspaces of $\CC^k$. 

An affine variety is \emph{irreducible} if it is nonempty and cannot be written as the proper union of two affine varieties. 
In an irreducible variety $X$, every nonempty Zariski open subset 
is Zariski dense in $X$. %
The \emph{dimension} of a nonempty affine variety $X$, denoted by $\dim(X)$, is the largest number $k$ for which there exists a chain of irreducible varieties $X_0 \subsetneq X_1 \subsetneq \ldots \subsetneq X_k \subseteq X$. It follows from this definition that if $X' \subsetneq X$ are irreducible varieties, then $\dim(X') < \dim(X)$. An affine variety is zero-dimensional if and only if it is finite.

A subset $X \subseteq \CC^k$ is \emph{Zariski constructible} if it can be obtained from a finite number of affine varieties by taking intersections, unions and complements. 
A Zariski constructible set $X$ is \emph{irreducible} if its Zariski closure is irreducible. Every Zariski constructible set contains a dense Zariski open subset of its closure.
Let $X \subseteq \CC^k$ be an irreducible Zariski constructible set.
A function $f: X \rightarrow \CC^\ell$ is a \emph{polynomial map} if there exist polynomials $f_1,\ldots,f_\ell \in \CC[x_1,\ldots,x_k]$ such that $f(x) = (f_1(x),\ldots,f_\ell(x))$ for all $x \in X$. Polynomial maps are continuous with respect to the Zariski topology, and it follows from topological considerations that if $X$ is an irreducible Zariski constructible set and $f : X \rightarrow \CC^\ell$ is a polynomial map, then $\overline{f(X)}$ is also irreducible.

Let $X \subseteq \CC^k$ be an irreducible affine variety, and let $I(X) \subseteq \CC[x_1,\ldots,x_k]$ be the ideal of polynomials vanishing on $X$. Fix a generating set $\{f_1,\ldots,f_\ell\}$ of $I(X)$, and consider the polynomial map $f = (f_1,\ldots,f_\ell) : \CC^k \rightarrow \CC^\ell$. We let $df$ denote the \emph{Jacobian} of $f$, which we view as a $k \times \ell$ matrix of polynomials. At each point $x \in X$, we define the \emph{tangent space of $X$ at $x$} (denoted by $T_x(X)$)  as the kernel of $df_x$, the Jacobian of $f$ evaluated at $x$. This is a linear subspace of $\CC^k$, and it is independent of the choice of generating polynomials of $I(X)$. The dimension of $T_x(X)$ is always at least the dimension of $X$. If the two numbers are equal, then $x$ is a \emph{smooth point} of $X$; otherwise it is a \emph{singular point}. The singular points of $X$ form a proper subvariety called the \emph{singular locus} of $X$. If $X'$ is an irreducible affine variety with $X' \subseteq X$ and $x \in X'$, then $T_x(X')$ is a linear subspace of $T_x(X)$.

Let $f : X \rightarrow \CC^\ell$ be a polynomial map and suppose that $f(X) \subseteq Y$ for some irreducible affine variety $Y$. Fix a point $x \in X$, and let $y = f(x) \in Y$. Then we have $df_x(T_x(X)) \subseteq T_y(Y)$. %
The following result, sometimes called Bertini's theorem, is an algebraic analog of Sard's lemma from differential topology.

\begin{theorem}\label{theorem:bertini} (\cite[Section 6.2]{shafarevich_2013})
    Let $X \subseteq \CC^k$ be an irreducible affine variety, let $f : X \rightarrow \CC^\ell$ be a polynomial map, and let $Y = \overline{f(X)}$. For a nonempty Zariski open subset of points $x \in X$, the map $df_x : T_x(X) \rightarrow T_{f(x)}(Y)$ is surjective. 
\end{theorem}

Let $f: X \rightarrow \CC^\ell$ be a polynomial map and let $y \in f(X)$. We call $f^{-1}(y)$ the  \emph{fiber of $f$ over $y$} in $X$. This is also an affine variety. %

\begin{theorem}\label{theorem:fiberdimension}\cite[Theorem 1.25]{shafarevich_2013} \emph{(Fiber dimension theorem.)}\newline
    Let $X \subseteq \CC^k$ be an irreducible affine variety, let $f : X \rightarrow \CC^\ell$ be a polynomial map, and let $Y = \overline{f(X)}$. Then for a nonempty Zariski open subset of points $x \in X$, the fiber of $f$ over $f(x)$ is nonempty and has dimension $\dim(X) - \dim(Y)$. %
\end{theorem}

An affine variety $X \subseteq \CC^k$ is \emph{defined over $\QQ$} if it is the set of common zeroes of some polynomials with rational coefficients. This is equivalent to requiring that the ideal $I(X)$ of polynomials vanishing on $X$ has a generating set consisting of polynomials with rational coefficients.  More generally, a Zariski constructible set $X \subseteq \CC^k$ is \emph{defined over $\QQ$} if it can be obtained  from a finite number of affine varieties defined over $\QQ$ by taking intersections, unions and complements. The first part of the following proposition is (a special case of) Chevalley's theorem.

\begin{proposition}\label{theorem:constructiblesetsoverQ}
    Let $X \subseteq \CC^k$ be a Zariski constructible set defined over $\QQ$, and let $f: X \rightarrow \CC^\ell$ be a polynomial map with rational coefficients. 
    \begin{enumerate}
        \item (\cite[Theorem 1.22]{basu.etal_2006}) The image $f(X)$ is also Zariski constructible and defined over $\QQ$.
        \item (\cite[Theorem A.8]{gkioulekas.etal_2020}) The variety $\overline{X}$ is also defined over $\QQ$.
        \item (\cite[Lemma A.4]{gortler.etal_2019}) There is a proper subvariety $X'$ of $\overline{X}$ defined over $\QQ$ such that $\overline{X} - X' \subseteq X$.
    \end{enumerate}
\end{proposition} 

Let $X \subseteq \CC^k$ be an irreducible variety defined over $\QQ$. We say that a point $x \in \CC^k$ is \emph{$X$-generic} if $x \in X$ and the only polynomials with rational coefficients that vanish at $x$ are those that vanish at every point of $X$. In other words, $x$ is $X$-generic if it is not contained in any proper subvariety of $X$ that is defined over $\QQ$. More generally, a point of an irreducible Zariski constructible set $X$ is \emph{$X$-generic} if it is $\overline{X}$-generic. Note that by the \cref{theorem:constructiblesetsoverQ}(c), every $\overline{X}$-generic point lies in $X$. The following lemmas describe some basic properties of this notion of genericity.%

\begin{lemma}\label{lemma:genericdense}\cite[Lemma A.6]{gortler.etal_2019} \emph{(Density of generic points.)}\newline
    Let $X' \subseteq X \subseteq \CC^k$ be irreducible affine varieties with $X$ defined over $\QQ$. If $X'$ contains an $X$-generic point, then the set of $X$-generic points in $X'$ is Zariski dense in $X'$.
\end{lemma}

\begin{lemma}\label{lemma:genericimage} \emph{(Generic points and polynomial maps.)}\newline
    Let $X \subseteq \CC^k$ be an irreducible affine variety defined over $\QQ$, let $f : X \rightarrow \CC^\ell$ be a polynomial map with rational coefficients, and let $Y = \overline{f(X)} \subseteq \CC^\ell$.
    \begin{enumerate}
        \item (\cite[Lemma A.7]{gortler.etal_2019}) If $x \in X$ is $X$-generic, then $f(x)$ is $Y$-generic.
        \item (\cite[Lemmas 4.4 and A.8]{gortler.etal_2019}) If $y \in Y$ is $Y$-generic, then the fiber $f^{-1}(y)$ contains an $X$-generic point. In particular, every $Y$-generic point lies in $f(X)$.
    \end{enumerate}
\end{lemma}

Note that in the preceding statement, $Y$ is irreducible, and it is defined over $\QQ$ by \cref{theorem:constructiblesetsoverQ}, so it is reasonable to talk about $Y$-generic points.

\begin{lemma}\label{lemma:genericproperties} \emph{(Properties of generic points.)}\newline
    Let $X \subseteq \CC^k$ be an irreducible affine variety defined over $\QQ$, let $x \in X$ be an $X$-generic point, and let $f: X \rightarrow \CC^\ell$ be a polynomial map with rational coefficients. Finally, let $Y = \overline{f(X)}$. We have the following.
    \begin{enumerate}
        \item $x$ is a smooth point.
        \item The fiber of $f$ over $f(x)$ has dimension $\dim(X) - \dim(Y)$.
        \item The map $df_x : T_x(X) \rightarrow T_{f(x)}(Y)$ is surjective.
    \end{enumerate}
\end{lemma}

\noindent For more details about varieties over $\QQ$ and their generic points, see~\cite[Section 2.3.8]{garamvolgyi_2024}.

\subsection{Proofs for \texorpdfstring{\cref{section:preliminaries,section:genericstressmatroid}}{Sections 2 and 3}}\label{appendix:prelims}

\begin{proof}[Proof of \cref{lemma:largenullity}]
    \textit{(a)} We have $\stressnullity{G} = n$ if and only if for any generic realization $(G,p)$, $K_1(G,p) = \CC^n$. If $G$ is $\mathcal{R}_d$-independent, then $(G,p)$ only has the zero stress, so  $K_1(G,p) = \CC^n$ indeed holds. Otherwise, $(G,p)$ has a nonzero stress $\omega$. We can find some one-dimensional realization of $G$ that does not have $\omega$ as a stress by suitably perturbing the coordinates of $(G,p)$, so in this case $K_1(G,p) \neq \CC^n$.
    
    \textit{(b)} For $G = K_{d+2}$, it follows from \cref{theorem:ght} that $\stressnullity{G} = d+1 = n - 1$. Sufficiency follows from the observation that adding an isolated vertex increases $\stressnullity{G}$ by one, while adding $\mathcal{R}_d$-bridges does not change it (see \cref{lemma:bridgeconverse}).
    
    For necessity, suppose that $\stressnullity{G} = n-1$; by part \textit{(a)}, this implies that $G$ is not $\mathcal{R}_d$-independent. 
    Let $(G,p)$ be a generic realization of $G$ in $\CC^d$ and let $\omega$ be a nonzero stress of $(G,p)$. %
    We now have \[n-1 = \dim(K_1(G,p)) \leq \dim (\stresskernel{1}{G}(\omega)) < n,\] and hence $K_1(G,p) = \stresskernel{1}{G}(\omega)$. This means that the stress matrix $\Omega$ associated to $\omega$ has an $(n-1)$-dimensional kernel, and hence $\rk(\Omega) = 1$. It is not difficult to see that in this case $C$ must be a complete graph, see~\cite[Lemma 3.6]{garamvolgyi.jordan_2023}.
    
    For any $\mathcal{R}_d$-circuit $C$ of $G$ there is a stress of $(G,p)$ that is supported on $E(C)$, and thus by the previous paragraph, $C$ must be a copy of $K_{d+2}$, since this is the only complete $\mathcal{R}_d$-circuit. Moreover, there cannot be a different $\mathcal{R}_d$-circuit $C'$ in $G$, since then the union of $C$ and $C'$ would be noncomplete but still have a stress of $(G,p)$ that is supported on it, contradicting the argument in the previous paragraph. Hence there is a single copy $C$ of $K_{d+2}$ in $G$, and every edge $e \in E(G) - E(C)$ must be an $\mathcal{R}_d$-bridge in $G$, as claimed.
\end{proof}

\begin{proof}[Proof of \cref{lemma:ksum}]
    \textit{(a)} Let $E_0'$ denote the edge set of a minimally \rig{d} spanning subgraph of $G_1 \cap G_2$, and let us extend $E_0'$ to a basis $E_i'$ of $\mathcal{R}_d(G_i)$ for $i \in \{1,2\}$. It follows from \cref{lemma:whiteleygluing}(b) %
    that $E' = E_1' \cup E_2'$ is a basis of $\mathcal{R}_d(G)$. Each edge $uv \in E - E'$ is contained in an $\mathcal{R}_d$-circuit in $G_1$ or $G_2$ (or both), and hence there is a stress $\omega_{uv} \in S_1 \cup S_2$ that is nonzero on $uv$. Since $E'$ is a maximal independent subset in $\mathcal{R}_d(G)$, the set $\{\omega_{uv}, uv \in E - E'\}$ generates $S(G,p)$, and thus so does $S_1 \cup S_2$.
    
    \textit{(b)}
    This is folklore, see, e.g.,~\cite[Lemma 6.3]{bernstein.etal_2021}.

    \textit{(c)} Let $\{u,v\} = V_1 \cap V_2$. By a slight abuse of notation, let us identify each of $S_1,S_2,S(G_1+uv,p_1),S(G_2+uv,p_2)$ and $S(G,p)$ as a subspace of $S(G+uv,p)$. Using part \emph{(b)} we can deduce that  
    \begin{equation}\label{eq:stressdirectsum}
        S_1 \oplus S_2 \subseteq S(G,p) \subseteq S(G+uv,p) = S(G_1+uv,p_1) \oplus S(G_2+uv,p_2). 
    \end{equation}
    If $\{u,v\}$ is not $\mathcal{R}_d$-linked in either $G_1$ or $G_2$, then $S_i = S(G_i+uv,p_i)$ for $i \in \{1,2\}$, and hence the chain of containments above gives $S(G,p) = S_1 \oplus S_2$. If on the other hand $\{u,v\}$ is $\mathcal{R}_d$-linked in precisely one of $G_1$ and $G_2$, then the dimension of $S_1 \oplus S_2$ is one less than the dimension of $S(G_1+uv,p_1) \oplus S(G_2+uv,p_2)$, and similarly, the dimension of $S(G,p)$ is one less than the dimension of $S(G+uv,p)$. Again, the chain of containments in \cref{eq:stressdirectsum} implies that $S(G,p) = S_1 \oplus S_2$.
\end{proof}

In the following proofs we shall use a more general form of Chevalley's theorem (\cref{theorem:constructiblesetsoverQ}(a)). A \emph{formula in the language of fields with coefficients in $\CC$} is a first-order formula built from polynomial equations of the form ``$f(x) = 0$'' with $f \in \CC[x_1,\ldots,x_k]$ using the logical connectives ``and'' and ``or'', negation, and existential and universal quantifiers. We say that such a formula \emph{has coefficients in $\QQ$} if the polynomials appearing in it all have rational coefficients. A stronger form of Chevalley's theorem states that any subset of $\CC^k$ that can be defined using a formula in the language of fields is constructible, and if the formula has coefficients in $\QQ$, then the resulting constructible set is also defined over $\QQ$, see~\cite[Corollary 1.24]{basu.etal_2006}.

\begin{proof}[Proof sketch of \cref{lemma:dualoverQ}]
    The union in \cref{eq:dualdescription} can be defined by a formula in the language of fields and, since $X$ is defined over $\QQ$, this formula has coefficients in $\QQ$. By Chevalley's theorem, this set is constructible and defined over $\QQ$, so its closure $C_X$ is also defined over $\QQ$ by \cref{theorem:constructiblesetsoverQ}(b). Hence by \cref{theorem:constructiblesetsoverQ}(a) and (b), $X^*$ is also defined over $\QQ$. 
    
    It follows now from \cref{lemma:genericimage} that for every $X^*$-generic point $\omega$, there is a $C_X$-generic pair $(x,\omega)$, and that $x$ is $X$-generic for such a pair. Moreover, $x$ is a smooth point of $X$ by \cref{lemma:genericproperties}(a), and thus by \cref{eq:conormalprojection}, we have $\omega' \in {T_x(X)}^\perp$ whenever $(x,\omega') \in C_X$. It follows from these observations that the union in \cref{eq:genericclosure} contains all $C_X$-generic points, and hence it is Zariski dense in $C_X$ by \cref{lemma:genericdense}.  
\end{proof}

\begin{proof}[Proof of \cref{lemma:dualconnectivity}]
    By symmetry, it is enough to show the ``only if'' direction. Let us suppose that $(E_1,E_2)$ is a separation of $\algmat{X}$. Then by \cref{lemma:algmatseparation}, we have $X = X_{E_1} \times X_{E_2}$. It is known that in this case for every point $x = (x_1,x_2) \in X$, we have $T_x(X) = T_{x_1}(X_{E_1}) \times T_{x_2}(X_{E_2})$, and thus ${T_x(X)}^\perp = {T_{x_1}(X_{E_1})}^\perp \times {T_{x_2}(X_{E_2})}^\perp$. Combined with the  fact that $\dim(X) = \dim(X_{E_1}) + \dim(X_{E_2})$ this also shows that $x = (x_1,x_2)$ in $X$ is smooth if and only if $x_i$ is smooth in $X_{E_i}$ for $i \in \{1,2\}$. 
    
    Let us denote the smooth locus of $X$ and $X_i$ by $X_{sm}$ and ${(X_i)}_{sm}$, respectively.
    Recall that by \cref{eq:dualdescription}, $X^*$ is the Zariski closure of
    \begin{align*}
        \bigcup \{{T_{x}(X)}^\perp : x \in X_{sm}\}  
        &= \bigcup \left\{{T_{x_1}(X_{E_1})}^\perp \times {T_{x_2}(X_{E_2})}^\perp : x_i \in {(X_i)}_{sm}, i \in \{1,2\}\right\} \\
        &= \bigcup \left\{{T_{x_1}(X_{E_1})}^\perp : x_1 \in {(X_1)}_{sm}\right\} \times \bigcup \left\{{T_{x_2}(X_{E_2})}^\perp : x_2 \in {(X_2)}_{sm}\right\}.
    \end{align*}
    Using the fact that the Zariski closure of a Cartesian product of two sets is the Cartesian product of the respective Zariski closures, we obtain $X^* = {(X_{E_1})}^* \times {(X_{E_2})}^*$. This implies that ${(X^*)}_{E_i} = {(X_{E_i})}^*$ for $i \in \{1,2\}$, and now \cref{lemma:algmatseparation} shows that $(E_1,E_2)$ is a separation of $\algmat{X^*}$.
\end{proof}

\begin{proof}[Proof of \cref{lemma:contactlocusstresskernel}]
    Let $(G,p)$ be a generic framework in $\CC^d$ with $\omega \in S(G,p)$, and let $x = m_{d,G}(p)$. By \cref{lemma:genericimage}, $x$ is an $M_{d,G}$-generic point, and consequently it is a smooth point by \cref{lemma:genericproperties}(a). It follows from \cref{lemma:stresstangentspace} that $x \in \contactlocus{d}{G}(\omega) \subseteq M_{d,G}$. In particular, $\contactlocus{d}{G}(\omega)$ contains an $M_{d,G}$-generic point. Now \cref{lemma:genericdense} implies that the $M_{d,G}$-generic points are Zariski dense in $\contactlocus{d}{G}(\omega)$, and hence we can write
\[\contactlocus{d}{G}(\omega) = \overline{\{m_{d,G}(q) : (G,q) \text{ is generic and } q \in \stresskernel{d}{G}(\omega)\}}.\]
    Let us denote the set of generic configurations in $\stresskernel{d}{G}(\omega)$ by $\gen{\stresskernel{d}{G}(\omega)}$. Since $\stresskernel{d}{G}(\omega)$ is a linear space that contains at least one generic configuration (namely, $p$), \cref{lemma:genericdense} implies that $\stresskernel{d}{G}(\omega) = \overline{\gen{\stresskernel{d}{G}(\omega)}}$. It follows that
\[\contactlocus{d}{G}(\omega) = \overline{m_{d,G}(\gen{\stresskernel{d}{G}(\omega)})} = \overline{m_{d,G}(\overline{\gen{\stresskernel{d}{G}(\omega)}})} = \overline{m_{d,G}(\stresskernel{d}{G}(\omega))},\]where the second equality uses the continuity of $m_{d,G}$ in the Zariski topology. 
\end{proof}

\begin{proof}[Proof sketch of \cref{lemma:projectivetechnical}]
    We use the notation set before the statement of \cref{lemma:projectivetechnical}. We first show that $P(G,p,\omega)$ is an irreducible constructible set. The set $\mathcal{F} \subseteq \CC^{(d+1) \times (d+1)}$ of feasible matrices is a Zariski open subset of $\CC^{(d+1) \times (d+1)}$. By the definition of a projective image, $P(G,p,\omega)$ is the image of this open subset under a regular map, i.e., it can be written as $\Phi(\mathcal{F})$, where $\Phi(x) = (f_1(x)/g_1(x),\ldots,f_k(x)/g_k(x))$ with $f_i,g_i$ polynomials such that $g_i$ is nonvanishing over $\mathcal{F}$, for $i \in \{1,\ldots,k\}$. It is not difficult to see that $\Phi(\mathcal{F})$ can be defined by a formula in the language of fields, and hence Chevalley's theorem implies that $P(G,p,\omega)$ is a constructible set. Irreducibility follows from the irreducibility of the set $\mathcal{F}$ of feasible matrices and the fact that $\Phi$ is a regular map.
    
    Let $\pi_1$ denote the projection of $P(G,p,\omega) \subseteq (\CC^{d})^V \times \CC^E$ onto the first factor. By Chevalley's theorem, $\pi_1(P(G,p,\omega))$ is a constructible set, and it contains the generic configuration $p$. It follows from \cref{lemma:genericdense} that the generic configurations are Zariski dense in $\overline{\pi_1(P(G,p,\omega))}$, and since a constructible set contains a dense open set of its closure, the generic configurations are also dense in $\pi_1(P(G,p,\omega))$. This implies that for a Zariski dense subset of points $(q,\omega')$ in $P(G,p,\omega)$, $q$ is generic, and hence $\omega'$ is \quasigen. It follows that the \quasigen\ $d$-stresses are Zariski dense in $\pi_2(P(G,p,\omega))$. Since \quasigen\ $d$-stresses belong to $M_{d,G}^*$, by taking the Zariski closure, we deduce that $\pi_2(P(G,p,\omega))$ is also contained in $M_{d,G}^*$, as claimed.
\end{proof}

\subsection{Proofs for \texorpdfstring{\cref{section:globallylinked}}{Section 4}}\label{appendix:stresslinked}

\begin{proof}[Proof of \cref{lemma:gaussfiberchar}]
    We first determine the dimension of $L(G,p)$. Let $R_p = m_{d,G}^{-1}(m_{d,G}(p))$ denote the set of configurations $q$ for which $(G,q)$ is equivalent to $(G,p)$. By \cref{lemma:genericproperties}(b), we have $\dim(R_p) = dn - r_d(G)$. It follows from \cref{theorem:connelly} that $R_p \subseteq \sharedstresskernel{d}(G,p)$, so the restriction $m_{d,G} : \sharedstresskernel{d}(G,p) \rightarrow L(G,p)$ has the same fiber over $m_{d,G}(p)$. By another application of \cref{lemma:genericproperties}(b), we have \[\dim(L(G,p)) = \dim(\sharedstresskernel{d}(G,p)) - \dim(R_p) = d\stressnullity{G} - dn + r_d(G).\]
    
    With this formula in hand, the statement is easy to prove. If $\{u,v\}$ is not $\mathcal{R}_d$-linked, then $r_d(G+uv) = r_d(G) + 1$ by definition and $\stressnullity{G+uv} = \stressnullity{G}$ by \cref{lemma:bridgeconverse}, so in this case $\dim(L(G+uv,p)) > \dim(L(G,p))$. If $\{u,v\}$ is $\mathcal{R}_d$-linked, then $r_d(G+uv) = r_d(G)$ and hence $\dim(L(G+uv,p)) = \dim(L(G,p))$ holds if and only if $\stressnullity{G+uv} = \stressnullity{G}$, which in turn is equivalent to $\{u,v\}$ being \strongly{d}.
\end{proof}

\begin{proof}[Proof of \cref{lemma:stronglylinkedsubgraph}]
    It is clear that if $\{u,v\}$ is $\mathcal{R}_d$-linked in $G_0$, then it is also $\mathcal{R}_d$-linked in $G$. Let $(G,p)$ be a generic framework in $\CC^d$, let $(G_0,p_0)$ be the corresponding subframework, and let $\omega$ be a stress of $(G+uv,p)$ that is nonzero on $uv$ and whose support is contained in $E(G_0+uv)$. (Such a stress exists since $\{u,v\}$ is $\mathcal{R}_d$-linked in $G_0$.) By \cref{lemma:def}(c), it suffices to show that for every $q \in \sharedstresskernel{d}(G,p)$, $\omega$ is also a stress of $(G+uv,q)$. Let $(G_0,q_0)$ be the subframework of $(G,q)$ corresponding to $G_0$. Since every stress of $(G_0,p_0)$ can also be viewed as a stress of $(G,p)$, and $(G,q)$ satisfies these stresses, we have $q_0 \in \sharedstresskernel{d}(G_0,p_0)$. By definition, the restriction $\widetilde{\omega}$ of $\omega$ to the edges of $G_0+uv$ is  a stress of $(G_0+uv,p_0)$. Since $\{u,v\}$ is \strongly{d} in $G_0$, it follows that $\widetilde{\omega}$ is also a stress of $(G_0+uv,q_0)$, and thus $\omega$ is also a stress of $(G+uv,q)$, as required.
\end{proof}

\begin{proof}[Proof of \cref{lemma:completeseparator}]
    If $\{u,v\}$ is \strongly{d} in $G_1$, then it is \strongly{d} in $G$ by \cref{lemma:stronglylinkedsubgraph}. Hence let us assume that $\{u,v\}$ is \strongly{d} in $G$. Let $(G,p)$ be a generic realization in $\CC^d$, and let $(G_i,p_i)$ be the subframeworks corresponding to $G_i$ for $i \in \{1,2\}$. The case when $G_1$ and $G_2$ are disjoint is easy to verify, so let us suppose that $G_1 \cap G_2$ is a (nonempty) complete graph.
    
    Since the pair $\{u,v\}$ is \strongly{d} in $G$, it is $\mathcal{R}_d$-linked in $G$. We claim that it is also $\mathcal{R}_d$-linked in $G_1$. This can be seen geometrically: indeed, if there were an infinite number of realizations $\{(G_1,q_i), i \in I\}$ all equivalent to $(G_1,p_1)$ and such that the edge lengths $\{m_{uv}(q_i), i \in I\}$ are pairwise different, then by gluing $(G_2,p_2)$ to these realizations (which we can do, since $G_1 \cap G_2$ is a complete graph and thus \globrig{d}) we would also obtain that $\{u,v\}$ is not $\mathcal{R}_d$-linked in $G$, a contradiction. It follows that there is a stress $\omega$ of $(G_1+uv,p_1)$ that is nonzero on $uv$. By \cref{lemma:def}(c), it suffices to show that for every $q_1 \in \sharedstresskernel{d}(G_1,p_1)$, $\omega$ is also a stress of $(G_1+uv,q_1)$. To this end, let us fix a configuration $q_1 \in \sharedstresskernel{d}(G_1,p_1)$. Let $S$ denote $V_1 \cap V_2$, and let us consider the subframeworks $(G[S],p_S)$ and $(G[S],q_S)$ obtained by restricting $(G_1,p_1)$ and $(G_1,q_1)$, respectively, to $G[S]$. The fact that $q_1 \in \sharedstresskernel{d}(G_1,p_1)$ implies $q_S \in \sharedstresskernel{d}(G[S],p_S)$, and since $G[S]$ is \globrig{d}, by \cref{theorem:ght}(c) we have that $q_S \in \affine{G[S]}{p_S}$. 
    
    Let $A$ be an affine transformation that sends $p_S$ to $q_S$, and let $q_2$ be the configuration obtained by applying $A$ to each point in the configuration $p_2$. Note that $S(G_2,p_2) \subseteq S(G_2,q_2)$. Consider the framework $(G,q)$ obtained by gluing $(G_1,q_1)$ and $(G_2,q_2)$. By a slight abuse of notation, we may view $S(G_i,p_i)$ and $S(G_i,q_i)$ as subsets of $S(G,p)$ and $S(G,q)$, respectively. We now have \[S(G,q) \supseteq S(G_1,q_1) \cup S(G_2,q_2) \supseteq S(G_1,p_1) \cup S(G_2,p_2),\] and by \cref{lemma:ksum}(a), $S(G,p)$ is generated by $S(G_1,p_1) \cup S(G_2,p_2)$. It follows that $S(G,p) \subseteq S(G,q)$, or in other words, that $q \in \sharedstresskernel{d}(G,p)$. By the assumption that $\{u,v\}$ is \strongly{d} in $G$, we have that $\omega$ is also a stress of $(G+uv,q)$. But since the support of $\omega$ is contained in $E(G_1)$, this means that it is a stress of $(G_1+uv,q_1)$, as desired.
\end{proof}

\end{document}